\theoremstyle{plain}
\newtheorem{lemma}{Lemma}[section]
\newtheorem{prop}[lemma]{Proposition}
\newtheorem{theo}[lemma]{Theorem}
\newtheorem{coro}[lemma]{Corollary}
\theoremstyle{remark}
\newtheorem{rem}[lemma]{Remark}
\newtheorem*{notat}{Notation}
\theoremstyle{definition}
\newtheorem{definition}[lemma]{Definition}
\newtheorem{ex}[lemma]{Example}
\newtheorem{question}[lemma]{Open question}
\newtheorem{conjecture}[lemma]{Conjecture}
\newcommand{\op}{\text{op}}
\newcommand{\obj}{\mathrm{Obj}}
\newcommand{\mor}{\mathrm{Mor}}
\newcommand{\Hom}{\mathrm{Hom}}
\newcommand{\aut}{\mathrm{Aut}}
\newcommand{\id}{\mathrm{Id}}
\newcommand{\C}{\mathscr{C}}
\newcommand{\D}{\mathscr{D}}
\newcommand{\sgn}{\mathrm{sgn}}
\newcommand{\Spl[1]}{\textnormal{(S#1)}}
\newcommand{\N}{\mathbb{N}}
\newcommand{\Z}{\mathbb{Z}}
\newcommand{\R}{\mathbb{R}}
\newcommand{\Pp}{\mathbb{P}}
\newcommand{\Su}{\mathbb{S}}
\newcommand{\rk}{\mathrm{rk}}
\newcommand{\tq}{\;/\;}
\newcommand{\X}{\mathcal{X}}
\newcommand{\K}{\mathcal{K}}
\newcommand{\mxl}{\textnormal{mxl}}
\newcommand{\mnl}{\textnormal{mnl}}
\newcommand{\M}{\mathcal{M}_X}
\newcommand{\B}{\mathcal{B}_X}
\newcommand{\Relp}{\mathcal{R}_X}
\newcommand{\Relm}{\mathcal{S}_X}
\newcommand{\A}{\mathcal{A}}
\newcommand{\CC}{\mathcal{C}}
\newcommand{\Q}[1]{\mathcal{Q}_{#1}}
\newcommand{\T}{\mathbb{T}^2}
\newcommand{\Tn}{\mathbb{T}^n}
\newcommand{\Kl}{\mathbb{K}}
\newcommand{\im}{\text{Im}\:}
\newcommand{\arr}{-triangle 45}
\begin{document}

\title{Poset splitting and minimality of finite models}

\author{Nicol\'as Cianci}
\address{Facultad de Ciencias Exactas y Naturales \\ Universidad Nacional de Cuyo \\ Mendoza, Argentina.}
\email{nicocian@gmail.com}

\author{Miguel Ottina}
\email{mottina@fcen.uncu.edu.ar}

\subjclass[2010]{06A99, 55P20 (Primary), 55Q99, 57M20, 18B35 (Secondary)}
\keywords{Minimal finite models, finite topological spaces, posets, Eilenberg-MacLane spaces, Hurewicz's theorem.}

\thanks{Research partially supported by grant M015 of SeCTyP, UNCuyo.}

\begin{abstract}
We develop a novel technique, which we call poset splitting, that allows us to solve two open problems regarding minimality of finite models of spaces: the nonexistence of a finite model of the real projective plane with fewer than 13 points and the nonexistence of a finite model of the torus with fewer than 16 points. Indeed, we prove much stronger results from which we also obtain that there does not exist a finite model of the Klein bottle with fewer than 16 points and that the integral homology groups of finite spaces with fewer than 13 points are torsion-free, settling a conjecture of Hardie, Vermeulen and Witbooi. Furthermore, we also apply our technique to give a complete characterization of the minimal finite models of the real projective plane, the torus, and the Klein bottle.

In addition, we show that the poset splitting technique has an intrinsic interest giving original topological results that can be obtained from its application, such as a generalization of Hurewicz's theorem for non-simply-connected spaces and a generalization of a result of R. Brown on the fundamental group of a space.
\end{abstract}

\maketitle

\section{Introduction}

A well-known result of McCord \cite{McC} asserts that for every finite simplicial complex $K$ there exists a finite T$_0$--space $\mathcal{X}(K)$ and a weak homotopy equivalence $|K|\to\mathcal{X}(K)$. Hence, every compact CW-complex $X$ admits \emph{finite models}, that is, finite topological spaces which are weak equivalent to $X$. Moreover, the poset of cells of a finite and regular CW-complex $X$ is a finite model of $X$. Finite models permit the study of homotopical invariants of spaces by means of combinatorial tools and serve as a method to tackle open problems in algebraic topology from a different point of view \cite{BarLN}.

A natural problem regarding finite models is to find \emph{minimal finite models} for a given compact CW-complex $X$, that is finite models of $X$ of the minimum possible cardinality. The first question in this line was stated by J.P. May in \cite{May}, where he conjectures that the $n$--fold non-Hausdorff suspension of the $0$--sphere, denoted $\Su^n S^0$, is a minimal finite model of the $n$--sphere. This question was answered positively by Barmak and Minian in \cite{BM_minimal}, where they also prove that $\Su^n S^0$ is the only minimal finite model of the $n$--sphere. In the same article they give a characterization of the minimal finite models of finite graphs.

Despite the simplicity of its formulation, it turns out that finding minimal finite models of spaces is a very hard problem due to the lack of tools which relate homotopy invariants of a finite T$_0$--space with the combinatorial information present in its Hasse diagram. Also, as the number of (unlabeled) posets on $n$ points seems to grow exponentially on $n^2$ \cite{B-McKay} and since there does not exist an effective algorithm to describe posets of $n$ points, even a computer-aided brute force method becomes impractical for posets with more than 12 points. The intrinsic difficulty of the minimality problem is reflected in the fact that, prior to this article, the only known minimal finite models were those of the spheres, the graphs and the homotopically trivial spaces.

Several open questions about minimal finite models appear in the literature. For example, a finite model of the real projective plane of 13 points is exhibited in \cite[Example 7.1.1]{BarLN} as the poset of cells of a certain regular CW-complex structure and is also constructed in \cite{HVW} from a finite version of the mapping cone. Its minimality was raised as an open question in \cite{HVW} and in \cite{BarLN} and verified by M. Adamaszek with a computer-assisted proof which required the analysis of approximately $10^8$ cases \cite{Ad}.

In this article we give the first purely mathematical proof of this result showing that if $X$ is a finite and connected space with fewer than 13 points then $\pi_1(X)$ is a free group. Moreover, we prove that there exist (up to homeomorphism) exactly two minimal finite models of the real projective plane: the aforementioned one and its opposite.

In \cite{HVW}, Hardie, Vermeulen and Witbooi state a stronger conjecture: that the finite model of the real projective plane mentioned above is the smallest finite space with 2--torsion in any integral homology group. We will also prove this conjecture showing that if $X$ is a finite space such that $H_n(X)$ has torsion for some $n\in\N$ then $X$ has at least $13$ points.

Also, in section \ref{section_torus} we prove that there does not exist a finite model of the torus with fewer than 16 points settling another open problem \cite[p.44]{BarLN}. Indeed, we prove a stronger result from which we also derive the non-existence of a finite model of the Klein bottle with fewer than 16 points. Finite models of the Klein bottle had not been studied before. Furthermore, we prove that there exist (up to homeomorphism) exactly two minimal finite models of the torus: $\Su S^0 \times \Su S^0$  and the space $\T_{1,1}$ of page \pageref{fig_Q11} of this work. The first one was known, but not the second one, which was obtained from our methods. Moreover, in the same proof we also obtain that there exist (up to homeomorphism) exactly two minimal finite models of the Klein bottle: the spaces $\Kl_{1,0}$ and $\Kl_{0,1}$ of figures \ref{fig_Q10} and \ref{fig_Q01}. Remarkably, these four minimal finite models are posets of height 2 obtained by suitably combining two posets of height one in four possible ways, resembling the way the torus and the Klein bottle are obtained as a quotient of a cylinder.

All these original results are achieved by developing a novel technique which we call \emph{poset splitting} and which turns out to be interesting in itself. This technique is developed in section \ref{section_splitting} where we show some new topological results that can be obtained from it. In particular, we obtain a generalization of a result of R. Brown \cite{Bro} regarding the fundamental group and a generalization of Hurewicz's theorem for non-simply-connected spaces.

Finally, in the last section of this article we exhibit finite models for real projective spaces, tori and Moore spaces of type $(\Z_n,k)$ raising open questions about their minimality.

\section{Preliminaries}

In the first part of this section we will recall the basic notions of the theory of finite topological spaces, fix notation, and give some basic results which were not found in the classical literature. For a comprehensive exposition on finite spaces the reader may consult \cite{BarLN}. In the second part of this section we will review the categorical approach to homotopy theory of posets with some new insights which are related to works of C. Cibils and J. MacQuarrie \cite{CM} and of J. Barmak and G. Minian \cite{BM2}.

\subsection*{Classical definitions and results on homotopy theory of posets}

The theory of finite topological spaces is based in the well-known functorial correspondence between Alexandroff T$_0$--spaces and posets given by Alexandroff which preserves the underlying set and is induced by the inclusion relation between minimal open neighbourhoods \cite{Alex}. In this way, any Alexandroff T$_0$--space is naturally endowed with a partial order. Indeed, if $X$ is an Alexandroff T$_0$--space and $x\in X$ then the minimal open set which contains $x$ is $U_x=\{a\in X \tq a\leq x\}$ and the minimal closed set which contains $x$ is $F_x=\overline{\{x\}}=\{a\in X \tq a\geq x\}$. Under this correspondence, continuous map between Alexandroff T$_0$--spaces correspond to order-preserving morphisms between posets.

If $X$ is an Alexandroff T$_0$--space it is standard to denote $\widehat{U}_x=\{a\in X \tq a< x\}$, $\widehat{F}_x=\{a\in X \tq a> x\}$, $C_x=U_x\cup F_x$ and $\widehat{C}_x=C_x-\{x\}$. In the case that several topological spaces are considered at the same time, we will denote $U_x$ by $U_x^X$ to indicate the space in which the minimal open set is considered. We will use similar notations for $F_x$, $C_x$, $\widehat{U}_x$, $\widehat{F}_x$ and $\widehat{C}_x$. 

Stong gives in \cite{Sto} a simple algorithm to decide whether two finite T$_0$--spaces are homotopy equivalent or not. Following May's terminology \cite{May}, if $X$ is a finite T$_0$--space we will say that a point $x\in X$ is an \emph{up beat point} (resp. \emph{down beat point}) of $X$ if the subposet $\widehat{F}_x$ has a minimum (resp. if the subposet $\widehat{U}_x$ has a maximum). Stong proves in the aforementioned article that if $x$ is a beat point of $X$ then $X-\{x\}$ is a strong deformation retract of $X$ and that two finite T$_0$--spaces are homotopy equivalent if and only if one obtains homeomorphic spaces after successively removing their beat points. Using the results of Stong it is easy to prove that if $X$ is a finite T$_0$--space and $x\in X$ then $C_x$ is contractible.

A different approach to the study of finite topological spaces was given by McCord in \cite{McC} where he proves that the \emph{face poset} $\mathcal{X}(K)$ of a simplicial complex $K$ (which is the poset of simplices of $K$ ordered by inclusion) is weak homotopy equivalent to $|K|$. More generally, if $K$ is a regular CW-complex then the face poset $\mathcal{X}(K)$ of $K$ is weak homotopy equivalent to $K$. 

McCord also proves in \cite{McC} that if $X$ is an Alexandroff T$_0$--space then there exists a weak homotopy equivalence $|\mathcal{K}(X)| \to X$. Here, $\mathcal{K}(X)$ is the \emph{order complex} of $X$, that is, the simplicial complex of the non-empty chains of $X$ and $|\mathcal{K}(X)|$ denotes its geometric realization.

Note that $\mathcal{K}(\mathcal{X}(K))$ is the barycentric subdivision of $K$. This leads to the definition of the \emph{barycentric subdivision} of a poset $X$ as $X'=\mathcal{X}(\mathcal{K}(X))$. It follows that $X$ and $X'$ are weak homotopy equivalent.

McCord also shows that for every Alexandroff space $X$ there exists a quotient map $q:X\to Y$ such that $Y$ is an Alexandroff T$_0$--space and $q$ is a homotopy equivalence \cite[Theorem 4]{McC}. 

From the results of McCord it follows that the singular homology groups of an Alexandroff T$_0$--space $X$ are naturally isomorphic to the simplicial homology groups of $\K(X)$. Hence, the homology groups of an Alexandroff T$_0$--space $X$ can be described in terms of the chains of $X$.

This idea is exploited in \cite{CO} where we obtain a spectral sequence which converges to the integral homology groups of a locally finite T$_0$--space in which the differentials have an explicit and simple description. Under certain conditions, the first page of this spectral sequence reduces to a chain complex, which motivates the following definition.

\begin{definition}\label{def:quasi_relativo}
Let $X$ be a locally finite T$_0$--space and let $A\subseteq X$ be a subspace. We say that $(X,A)$ is a \emph{relative quasicellular pair} if $A$ is open in $X$ and there exists an order preserving map $\rho:X-A\longrightarrow \N_0$, which will be called \emph{quasicellular morphism for $(X,A)$}, such that
\begin{enumerate}[(1)]
\item The set $\{x\in X-A:\rho(x)=n\}$ is an antichain for every $n\in \N_0$. 
\item For every $x\in X-A$, the reduced integral homology of $\widehat{U}_x^X$ is concentrated in degree $\rho(x)-1$.
\end{enumerate}
We say that a locally finite $T_0$--space is \emph{quasicellular} if $(X,\varnothing)$ is a relative quasicellular pair.
\end{definition}

Note that if $(X,A)$ is a relative quasicellular pair, $\rho$ is a quasicellular morphism for $(X,A)$ and $x,y\in X-A$, then $x<y$ implies that $\rho(x)<\rho(y)$.

Note also that if $K$ is a regular CW-complex then $\X(K)$ is quasicellular. In particular, if $X$ is an Alexandroff T$_0$--space then $X'$ is quasicellular.

Hereafter, homology will mean integral homology and hence the group of coefficients will be omitted from the notation.

In \cite{CO} we obtained the following result, which will be applied in section \ref{section_splitting}.

\begin{prop} \label{coro_quasicel_relativo}
Let $(X,A)$ be a relative quasicellular pair and let $\rho$ be a quasicellular morphism for $(X,A)$. For each $n\in \N_0$, let $J_n=\{x\in X-A:\rho(x)=n\}$.

Let $C(X,A)=(C_n(X,A),d_n)_{n\in\Z}$ be the chain complex defined by
\begin{itemize}
\item $C_n(X,A)=\bigoplus\limits_{x\in J_n}\tilde{H}_{n-1}(\widehat{U}_x)$ for each $n\in \N_0$ and $C_n(X,A)=0$ for $n<0$.
\item For each $n\in \Z$, $d_n:\bigoplus\limits_{x\in D_p}\tilde{H}_{p+q-1}(\widehat{C}^{X_p}_x)\longrightarrow\bigoplus\limits_{y\in D_{p-1}}\!\!\tilde{H}_{p+q-2}(\widehat{C}^{X_{p-1}}_y)$ is the group homomorphism defined by
\[d_n\left(\left(\left[\sum\limits_{i=1}^{l_x}a^x_i s^x_i\right]\right)_{x\in D_n}\right)=\left(\left[\sum\limits_{x\in D_n}\sum\limits_{s^x_i\ni y}(-1)^{\#(s^x_i\cap \widehat U_y)}a^x_i(s^x_i-\{y\})\right]\right)_{y\in D_{n-1}}\]
where for every $x\in D_n$, $l_x\in \N$, and for every $i=\{1,\dots,l_x\}$, $a^x_i\in \Z$ and $s^x_i$ is an $(n-1)$--chain of $\widehat{U}_x$.
\end{itemize}
Then, $H_n(X,A)=H_n(C(X,A))$ for all $n\in\N_0$.
\end{prop}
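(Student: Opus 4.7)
The plan is to construct a spectral sequence from a natural filtration of $X$ induced by $\rho$ and to show that, under the quasicellular hypothesis, its first page collapses onto a single row whose associated chain complex is exactly $C(X,A)$.

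I would begin by defining, for each $p\geq 0$, the subspace $X_p = A \cup \{x \in X-A : \rho(x) \leq p\}$, together with $X_{-1}=A$. Because $A$ is open in $X$ (equivalently, a down-set of the underlying poset) and $\rho$ is order-preserving on $X-A$, each $X_p$ is a down-set of $X$, hence open; so $A = X_{-1} \subseteq X_0 \subseteq X_1 \subseteq \ldots$ is an increasing filtration by open subspaces whose union is $X$. Note also that each $x \in J_p = X_p - X_{p-1}$ is maximal in $X_p$: any strictly greater element in $X-A$ would have $\rho>p$, and no element of $A$ lies above $x$ since $A$ is a down-set. In particular $U_x^{X_p}=U_x^X$ and $\widehat{U}_x^{X_p}=\widehat{U}_x^X$.

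The next step is to compute $H_n(X_p,X_{p-1})$. Since $J_p$ is an antichain of maximal points of $X_p$ and the sets $U_x^{X_p}$ are open and pairwise disjoint away from $X_{p-1}$, an excision argument (together with the observation that $X_{p-1}$ is open in $X_p$) gives
\[ H_n(X_p,X_{p-1}) \cong \bigoplus_{x\in J_p} H_n(U_x,\widehat{U}_x) \cong \bigoplus_{x\in J_p} \tilde{H}_{n-1}(\widehat{U}_x), \]
where the second isomorphism is the connecting homomorphism of the long exact sequence of the pair, using that $U_x=C_x$ is contractible (recalled in the preliminaries). By the quasicellular hypothesis, $\tilde{H}_{n-1}(\widehat{U}_x)=0$ unless $n=\rho(x)=p$. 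Plugging this into the homological spectral sequence of the filtration yields $E^1_{p,q} = H_{p+q}(X_p,X_{p-1})$ converging to $H_{p+q}(X,A)$, with $E^1_{p,q}=0$ for $q\neq 0$ and $E^1_{p,0}=C_p(X,A)$. Hence the spectral sequence collapses at $E^2$, and $H_n(X,A)\cong H_n(E^1_{\ast,0},d^1)$.

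The final and most delicate step is to identify $d^1$ with the explicit formula in the statement. The differential $d^1:E^1_{p,0}\to E^1_{p-1,0}$ is the connecting homomorphism of the triple $(X_p,X_{p-1},X_{p-2})$. Via the McCord correspondence, an element of $\tilde{H}_{p-1}(\widehat{U}_x)$ is represented by a simplicial cycle $\sum a_i s^x_i$ in $\K(\widehat{U}_x)$; its image under the connecting map is computed by taking the simplicial boundary of such a cycle inside $\K(X_{p-1})$ and projecting onto the summand indexed by each $y\in J_{p-1}$. A face of $s^x_i$ contributes to that summand precisely when it is obtained by deleting a vertex lying strictly above $y$ from a chain that contains $y$, and the resulting sign is controlled by the position of $y$ in the chain $s^x_i$, giving the factor $(-1)^{\#(s^x_i\cap \widehat{U}_y)}$. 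This bookkeeping of signs is the main obstacle and essentially reproduces, in the poset setting, the description of the cellular boundary of a regular CW-complex in terms of incidence numbers.
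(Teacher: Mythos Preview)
Your approach is correct and matches the method the paper attributes to \cite{CO}: the paper does not prove this proposition here but records it as a consequence of the spectral sequence developed in that reference, noting explicitly that one ``obtain[s] a spectral sequence which converges to the integral homology groups of a locally finite T$_0$--space'' and that ``under certain conditions, the first page of this spectral sequence reduces to a chain complex.'' Your filtration $X_p=A\cup\rho^{-1}(\{0,\dots,p\})$, the identification $H_n(X_p,X_{p-1})\cong\bigoplus_{x\in J_p}\tilde H_{n-1}(\widehat U_x)$ via maximality of $J_p$ in $X_p$, the collapse onto the row $q=0$, and the simplicial bookkeeping for $d^1$ are exactly that argument.
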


McCord also defines the \emph{non-Hausdorff suspension} of a topological space $X$ as the space $\mathbb{S}(X)$ whose underlying set is $X\amalg \{+,-\}$ and whose open sets are those of $X$ together with $X\cup\{+\}$, $X\cup\{-\}$ and $X\cup\{+,-\}$, and proves that for every space $X$ there exists a weak homotopy equivalence between the suspension of $X$ and $\mathbb{S}(X)$ \cite{McC}.

Recall that if $X$ and $Y$ are finite T$_0$--spaces, the \emph{non-Hausdorff join} $X\circledast Y$ is defined as the poset whose underlying set is the disjoint union $X\amalg Y$ and whose partial order is defined keeping the partial orders within $X$ and $Y$ and setting $x\leq y$ for all $x\in X$ and $y\in Y$. Note that $\mathbb{S}(X)=X\circledast S^0$.

If $X$ is a poset, $X^\op$ will denote the poset $X$ with the inverse order and will be called the \emph{opposite} space of $X$. Note that $|\K(X^\op)|=|\K(X)|$ and hence $X$ and $X^\op$ are weak homotopy equivalent.

Finally, recall that a \emph{finite model} of a topological space $Z$ is a finite space which is weak homotopy equivalent to $Z$. For example, the $n$--fold non-Hausdorff suspension of the $0$--sphere, $\mathbb{S}^n S^0$, is a finite model of the $n$--sphere \cite{McC}. A \emph{minimal finite model} of a topological space $Z$ is a finite model of $Z$ of minimum cardinality. By \cite[Theorem 4]{McC}, minimal finite models are T$_0$--spaces.

We will give now some basic results that were not found in the literature and will be needed later.

\begin{prop} \label{prop_non_Hausdorff_suspension}
Let $X$ be a finite T$_0$--space and let $a,b\in X$. Then $U_a\cup U_b$ is homotopy equivalent to $\Su (U_a\cap U_b)$. In particular, if $U_{a}\cap U_{b}$ is contractible then $U_{a}\cup U_{b}$ is contractible.
\end{prop}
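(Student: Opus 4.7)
\emph{Plan of proof.} The idea is to write down an explicit pair of order-preserving maps between $U_a \cup U_b$ and $\Su(U_a \cap U_b)$ and verify that they are homotopy inverses by using the standard fact that two order-preserving maps of Alexandroff T$_0$--spaces that are pointwise comparable are homotopic.

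First I would dispose of the degenerate case in which $a$ and $b$ are comparable. If, say, $a\leq b$, then $U_a \subseteq U_b$, so $U_a\cup U_b=U_b$ is contractible (with maximum $b$) and the intersection is $U_a$, likewise contractible. It then suffices to observe that the non-Hausdorff suspension of a contractible finite T$_0$--space is contractible: a down-beat-point collapse in the base remains a down-beat-point collapse after adjoining the two new maximal elements $+$ and $-$, so one can reduce to $\Su(\mathrm{pt})$, in which $+$ and then $-$ are themselves down beat points.

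In the main case, $a$ and $b$ are incomparable, so $U_a\cup U_b$ is the disjoint union of $U_a\cap U_b$, $U_a\setminus U_b$ and $U_b\setminus U_a$. I would then define
\[
f(x)=\begin{cases} x & \text{if }x\in U_a\cap U_b,\\ + & \text{if }x\in U_a\setminus U_b,\\ - & \text{if }x\in U_b\setminus U_a,\end{cases}
\]
and $g\colon \Su(U_a\cap U_b)\to U_a\cup U_b$ by $g|_{U_a\cap U_b}=\mathrm{id}$, $g(+)=a$, $g(-)=b$. The map $g$ is evidently order-preserving; for $f$ the only cases needing care are those in which $x<y$ with $x$ and $y$ in different pieces of the decomposition, and a short case-check shows that the problematic configurations cannot occur. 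For instance, if $x\in U_a\setminus U_b$ and $y\in U_a\cap U_b$ with $x<y$, then $x\leq y\leq b$ would contradict $x\notin U_b$.

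The composite $f\circ g$ is the identity---the incomparability of $a$ and $b$ is exactly what forces $f(a)=+$ and $f(b)=-$---while $g\circ f\geq\mathrm{id}$ pointwise, since on $U_a\setminus U_b$ it sends $x$ to $a\geq x$ and symmetrically on the other piece. Hence $g\circ f\simeq\mathrm{id}$ and $f$ is a homotopy equivalence. The \emph{in particular} statement then follows at once, because if $U_a\cap U_b$ is contractible then so is $\Su(U_a\cap U_b)$ by the argument used in the degenerate case. The only real obstacle is the careful case analysis establishing that $f$ is monotone; the rest of the argument is formal.
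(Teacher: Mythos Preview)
Your proof is correct and essentially identical to the paper's: the paper identifies $\Su(U_a\cap U_b)$ with the subspace $\{a,b\}\cup(U_a\cap U_b)$ of $U_a\cup U_b$, so that your map $g$ becomes the inclusion $i$ and your $f$ becomes the retraction $r$, and then verifies $ri=\id$ and $ir\geq\id$ exactly as you do. The case analysis for monotonicity of $f$ and the treatment of the comparable case are also the same in substance.
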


\begin{proof}
If $a\leq b$ or $b\leq a$ the result holds since $U_a\cup U_b$ and $\Su (U_a\cap U_b)$ are contractible. Hence, we may assume that $a$ and $b$ are incomparable.

Let $i:\{a,b\}\cup (U_{a}\cap U_{b})\to U_{a}\cup U_{b}$ be the inclusion map and let $r:U_a\cup U_b \to \{a,b\}\cup (U_{a}\cap U_{b})$ be defined by 
\begin{displaymath}
r(x) = \left\{
\begin{array}{cl}
x & \textnormal{ if $x\in U_a\cap U_b$} \\
a & \textnormal{ if $x\in U_a - U_b$} \\
b & \textnormal{ if $x\in U_b - U_a$}
\end{array}
\right .
\end{displaymath}
It is easy to check that the map $r$ is order-preserving and hence continuous. Also, $ri=\id$ and $ir\geq \id$. Thus, by \cite[Corollary 1.2.6]{BarLN}, $r$ is a strong deformation retraction.
\end{proof}

We ought to mention that the map $r$ of the previous proof already appears in the proof of proposition 11.2.3 of \cite{BarLN}. In a similar way, the following proposition is related to the notion of qc-reductions \cite[p.140]{BarLN}.

\begin{prop} \label{prop_weak_collapse}
Let $X$ be a finite T$_0$--space and let $a$ and $b$ be maximal elements of $X$. If $U_{a}\cup U_{b}$ is homotopically trivial then the quotient map $q:X\to X/\{a,b\}$ is a weak homotopy equivalence.
\end{prop}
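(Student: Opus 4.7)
The plan is to apply McCord's basis criterion \cite{McC} to the quotient map $q \colon X \to X/\{a,b\}$. Write $c = q(a) = q(b)$. The first step is to identify $X/\{a,b\}$ as an Alexandroff T$_0$-space and describe its minimal open sets. Since $a$ and $b$ are maximal in $X$, no element of $X-\{a,b\}$ lies above either of them, so for every $x \in X-\{a,b\}$ one has $a,b \notin U_x^X$; this yields $q^{-1}(q(U_x^X)) = U_x^X$, showing that $q(U_x^X)$ is open in the quotient and that it is the minimal open neighbourhood of $x$ in $X/\{a,b\}$. Moreover, the smallest open set of $X$ containing the pair $\{a,b\}$ is $U_a \cup U_b$, since open sets are downward-closed and $a,b$ are maximal. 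It follows that $U_c^{X/\{a,b\}} = q(U_a \cup U_b)$ and that $q^{-1}(U_c^{X/\{a,b\}}) = U_a \cup U_b$.

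The second step is to verify the hypotheses of McCord's theorem for the basis $\{U_y^{X/\{a,b\}}\}_{y \in X/\{a,b\}}$. For $y \neq c$, identify $y$ with its unique preimage in $X-\{a,b\}$; by the computation above, $q^{-1}(U_y^{X/\{a,b\}}) = U_y^X$, which is contractible as it has $y$ as a maximum. For $y = c$, the preimage is $U_a \cup U_b$, which is weakly contractible by the hypothesis that it is homotopically trivial. Since $\{U_y^{X/\{a,b\}}\}_y$ is a basis-like open cover of the Alexandroff T$_0$-space $X/\{a,b\}$ (as used throughout \cite{McC,BarLN}), McCord's theorem implies that $q$ is a weak homotopy equivalence.

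The only delicate point is bookkeeping around the quotient: confirming that $X/\{a,b\}$ is genuinely an Alexandroff T$_0$-space with the expected minimal open sets, and pinning down $q^{-1}(U_y^{X/\{a,b\}})$ in the two cases. Everything else reduces to the two observations that elements below $a$ or $b$ are the only elements involved in the identification, which is exactly what maximality of $a$ and $b$ guarantees, and that $U_y^X$ is contractible for any $y \in X$. Once this setup is in place, the application of McCord's theorem is immediate and no further obstacle appears.
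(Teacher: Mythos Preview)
Your proof is correct and follows the same approach as the paper's: apply McCord's theorem \cite[Theorem 6]{McC} to the basis of minimal open sets of $X/\{a,b\}$. The paper's proof is terser, citing \cite[Proposition 2.7.8]{BarLN} for the fact that $X/\{a,b\}$ is a finite T$_0$--space and then invoking McCord directly; you have simply spelled out the computation of the minimal open sets and their preimages under $q$, which is exactly the verification that McCord's criterion requires.
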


\begin{proof}
Note that $X/\{a,b\}$ is a finite T$_0$--space (\cite[Proposition 2.7.8]{BarLN}). The result then follows from McCord's theorem (\cite[Theorem 6]{McC}) taking the basis of minimal open sets of $X/\{a,b\}$.
\end{proof}

The following simple lemma constitutes a key step for the results of this article.

\begin{lemma} \label{lemma_subdiv_X_A}
Let $X$ be an Alexandroff T$_0$--space and let $A \subsetneq X$. Then
\begin{enumerate}
\item $A'$ is an open subset of $X'$ and $(X-A)'\subseteq X'-A'$.
\item The inclusion $i:(X-A)'\to X'-A'$ is a weak homotopy equivalence.
\end{enumerate}
\end{lemma}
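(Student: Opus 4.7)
The plan is first to unwind the definitions: $X'=\mathcal{X}(\mathcal{K}(X))$ is the poset of nonempty finite chains of $X$ ordered by inclusion, so $A'$ consists of those chains lying entirely in $A$, $(X-A)'$ consists of those lying entirely in $X-A$, and $X'-A'$ is precisely the set of chains of $X$ meeting $X-A$ in at least one point.

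For part (1), openness of $A'$ in $X'$ reduces to checking that $A'$ is downward closed in the inclusion order, which is immediate: any nonempty subchain of a chain contained in $A$ is still contained in $A$. The containment $(X-A)'\subseteq X'-A'$ is likewise immediate, since a nonempty chain contained in $X-A$ cannot be contained in $A$. Notice that this works for any subset $A\subsetneq X$; no openness of $A$ in $X$ is needed.

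For part (2), the key construction is an order-preserving retraction $r\colon X'-A'\to(X-A)'$ defined by $r(c)=c\cap(X-A)$. The assumption $c\in X'-A'$ guarantees $r(c)$ is nonempty, and it is a chain in $X-A$ because subsets of chains are chains; intersection with the fixed set $X-A$ preserves inclusion, so $r$ is order-preserving and hence continuous. One checks at once that $r\circ i=\id_{(X-A)'}$ and that $i\circ r(c)=c\cap(X-A)\subseteq c$ for every $c\in X'-A'$, the latter yielding $i\circ r\leq\id_{X'-A'}$. Invoking the standard principle that two comparable order-preserving maps between Alexandroff spaces are homotopic (the natural extension of \cite[Corollary 1.2.6]{BarLN}), we conclude $i\circ r\simeq\id_{X'-A'}$, so $i$ is a strong deformation retract and \emph{a fortiori} a weak homotopy equivalence. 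The entire argument is short; the real content is simply spotting the retraction ``intersect with $X-A$'', and the only mild subtlety to verify is that the comparable-maps-are-homotopic fact carries over from finite spaces to the possibly infinite Alexandroff setting, which is routine and poses no serious obstacle.
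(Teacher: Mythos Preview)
Your proof is correct, and in fact establishes something slightly stronger than what is claimed: the inclusion $i$ is a genuine homotopy equivalence (indeed a strong deformation retract), not merely a weak one.

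The paper takes a different, though closely related, route for part (2). Rather than building the global retraction $r(c)=c\cap(X-A)$, it verifies the hypotheses of McCord's theorem by computing, for each $\sigma\in X'-A'$, that $i^{-1}(U_\sigma)=U_\eta$ where $\eta=\sigma\cap(X-A)$; since $U_\eta$ has a maximum it is contractible, so McCord applies. The key observation---intersecting a chain with $X-A$---is the same in both arguments, but the paper uses it pointwise to feed McCord, whereas you promote it to a global order-preserving retraction and invoke the comparable-maps-are-homotopic principle. Your approach is more self-contained (it avoids the appeal to McCord) and yields the stronger conclusion of an actual deformation retraction; the paper's approach is marginally shorter given that McCord's theorem is already on the table. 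Your remark that the comparable-maps principle extends from finite to Alexandroff spaces is well placed: the explicit homotopy $H(c,t)=c\cap(X-A)$ for $t<1$, $H(c,1)=c$ is easily checked to be continuous for any Alexandroff target, so no difficulty arises.
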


\begin{proof}\ 

(1) Follows easily from the definition of barycentric subdivision of a poset.

(2) Let $\sigma\in X'-A'$ and let $\eta=\sigma\cap(X-A)$. Then $\eta\in (X-A)'$ and
\[i^{-1}(U_\sigma)=\{\tau\tq\textnormal{$\tau$ is a chain of $X-A$ and $\tau\subseteq \sigma$}\}=U_\eta\]
which is contractible. Hence, by McCord's theorem (\cite[theorem 6]{McC}), $i$ is a weak homotopy equivalence.
\end{proof}

\subsection*{Categorical approach to homotopy theory of posets}

When considered as a poset, an Alexandroff T$_0$--space $X$ can be regarded as a small category in the standard way, that is, the objects of the category are the elements of $X$ and for $x,y\in X$ there is a (unique) morphism from $x$ to $y$ if and only if $x\leq y$. It is well known that the nerve of this category is the simplicial set associated to the simplicial complex $\K(X)$ and hence the classifying space of $X$ is homeomorphic to $|\K(X)|$. In particular, the categorical definition of homotopy and homology groups of $X$ agrees with the usual definition for topological spaces.

Given a small category $\C$ we can consider its localization with respect to all its morphisms $\C[\mor(\C)^{-1}]$. We will write $L\C$ for this category, which is clearly a groupoid, and $\iota_\C:\C\to L\C$ for the natural localization functor. Note that a functor $F:\C\to \D$ between small categories induces a unique groupoid morphism $LF:L\C\to L\D$ in a functorial way \cite{GZ}.

D. Quillen proved in \cite{Qui} that the fundamental group of a small category $\C$ at an object $c_0$ is canonically isomorphic to $\aut_{L\C}(c_0)$, the group of automorphisms of $c_0$ in the groupoid $L\C$. So we can canonically identify the fundamental group of an Alexandroff T$_0$--space $X$ at $x_0$ with the group of automorphisms of $x_0$ in $LX$.

Now let $\mathcal{G}$ be a groupoid and let $\mathcal{N}$ be a subgroupoid of $\mathcal{G}$. Following \cite{Bro70}, we say that $\mathcal{N}$ is a normal subgroupoid of $\mathcal{G}$ if $\obj(\mathcal{N})=\obj(\mathcal{G})$ and for all $x,y\in \obj(\mathcal{G})$ and $g\in \Hom_\mathcal{G}(x,y)$ we have that $g^{-1}\aut_\mathcal{N}(y)g=\aut_\mathcal{N}(x)$. In this case we can identify objects $x,y\in \obj(\mathcal{G})$ if $\Hom_\mathcal{N}(x,y)\neq \varnothing$ and morphisms $\alpha,\beta\in \mor(\mathcal{G})$ if there exist morphisms $n_1,n_2\in \mor(\mathcal{N})$ such that $\alpha=n_1 \beta n_2$. It is easy to see that the classes of objects of $\mathcal{G}$ form a category whose morphisms are the classes of morphisms of $\mathcal{G}$ with composition induced by the composition in $\mathcal{G}$. This category is clearly a groupoid, called the \emph{quotient groupoid}, and will be denoted by $\mathcal{G}/\mathcal{N}$. The canonical projection $q:\mathcal{G}\to \mathcal{G}/\mathcal{N}$ that maps each object and morphism of $\mathcal{G}$ to its equivalence class is a functor and hence a morphism of groupoids. It is easy to prove that, with the notations above, if $F:\mathcal{G}\to\mathcal{H}$ is a morphism of groupoids such that $F$ sends morphisms of $\mathcal{N}$ to identity maps then there exists a unique groupoid morphism $\overline{F}:\mathcal{G}/\mathcal{N}\to\mathcal{H}$ such that $\overline{F}q=F$.

Recall that an \emph{indiscrete category} is a category $\C$ such that for any two objects $x,y \in \obj(\C)$, the set $\Hom_\C(x,y)$ consists of exactly one morphism. Note that any indiscrete category is a connected groupoid. We say that a category $T$ is a \emph{tree} if $LT$ is an indiscrete category and that a category is a \emph{forest} if it is a disjoint union of trees. If $\C$ is a connected small category, a \emph{maximal tree} in $\C$ is a subcategory $T\subseteq \C$ such that $T$ is a tree and $\obj(T)=\obj(\C)$.

Now, let $\C$ be a connected small category and let $T$ be a maximal tree in $\C$. Clearly $LT$ is a normal subgroupoid of $L\C$ and, since $LT$ is connected, the quotient $L\C/LT$ is actually a group, which is easily seen to be canonically isomorphic to $\aut_{L\C}(c_0)$ for every $c_0\in \obj(\C)$.

Let $\C$ and $\D$ be connected small categories, let $T_\C$ and $T_\D$ be maximal trees in $\C$ and $\D$ respectively and let $F:\C\to\D$ be a functor such that $F(T_\C)\subseteq T_\D$. Given $c_0\in\obj(\C)$, there is a commutative diagram
\begin{center}
\begin{tikzpicture}[x=3cm,y=2.5cm]
	\draw (0,1) node(C){$\C$};
	\draw (1,1) node(LC){$L\C$};
	\draw (2,1) node(LCLT){$L\C/LT_\C$};
	\draw (3,1) node(pi1C){$\pi_1(\C,c_0)$};
	\draw (0,0) node(D){$\D$};
	\draw (1,0) node(LD){$L\D$};
	\draw (2,0) node(LDLT){$L\D/LT_\D$};
	\draw (3,0) node(pi1D){$\pi_1(\D,F(c_0))$};
	\draw[->] (C) -- (LC) node [midway,above] {$\iota_\C$};
	\draw[->] (LC) -- (LCLT) node [midway,above] {$q_\C$};
	\draw[->] (LCLT) -- (pi1C) node [midway,above] {$\cong$};
	\draw[->] (D) -- (LD) node [midway,below] {$\iota_\D$};
	\draw[->] (LD) -- (LDLT) node [midway,below] {$q_\D$};
	\draw[->] (LDLT) -- (pi1D) node [midway,below] {$\cong$};
	\draw[->] (C) -- (D) node [midway,left] {$F$};
	\draw[->] (LC) -- (LD) node [midway,left]{$LF$};
	\draw[->] (LCLT) -- (LDLT) node [midway,right]{$\overline{LF}$};
	\draw[->] (pi1C)-- (pi1D) node [midway,right]{$F_*$};
\end{tikzpicture}
\end{center}
where $q_\C$ and $q_\D$ are the canonical projections.

Now, given a connected small category $\C$, a maximal tree $T$ in $\C$ and a group homomorphism $\alpha:\pi_1(\C)\to G$, we have a functor $\mathcal{F}_{T,\alpha}:\C\to G$ induced by $T$ and $\alpha$ given by the composition $\C\xrightarrow{\iota_\C}L\C\xrightarrow{q} L\C/LT\xrightarrow{\cong} \pi_1(\C)\xrightarrow{\alpha}G$. Observe that $\mathcal{F}_{T,\alpha}$ is trivial on $T$. It is not difficult to prove that the morphism induced by $\mathcal{F}_{T,\alpha}$ in the fundamental groups is $\alpha$.

Note that if $A$ is a connected subcategory of $\C$ we can choose a maximal tree $T_A$ of $A$ and extend it to a maximal tree $T$ of $\C$. Let $i:A\to \C$ be the inclusion functor and let $\alpha:\pi_1(\C)\to G$ be a morphism of groups. From the diagram above, it follows that the functor $\mathcal{F}_{T_A,\alpha i_\ast}:A\to G$ is the restriction of $\mathcal{F}_{T,\alpha}:\C\to G$. In particular, if $i_*=0$, or more generally if $\im i_\ast \subseteq \ker \alpha$, then $\mathcal{F}_{T,\alpha}|_A$ is trivial. If the subcategory $A$ is not connected, the same reasoning applies to each connected component of $A$. \label{functor_and_maximal_tree}

Now, let $X$ be a connected Alexandroff T$_0$--space, let $*$ be the only object of $G$ and let $\mathcal{F}=\mathcal{F}_{T,\alpha}$. An explicit computation of the homotopy fiber $\mathcal{F}/*$  shows that $\obj(\mathcal{F}/*)=X\times G$ and, for $x,y\in X$ there exists a unique morphism in $\mathcal{F}/*$ from $(x,g)$ to $(y,h)$ if an only if $x\leq y$ and $h\mathcal{F}(\phi)=g$, where $\phi$ is the only morphism in $X$ from $x$ to $y$. Therefore $\mathcal{F}/*$ is a poset and if $X$ is a locally finite T$_0$--space then $\mathcal{F}/*$ is the poset associated to a locally finite T$_0$--space. The projection $p:\mathcal{F}/*\to X$ is easily seen to be a covering of spaces so $\mathcal{F}/*$ is a cover of $X$. 

The change of fiber $g_*:\mathcal{F}/*\to \mathcal{F}/*$ induced by $g\in G$ is just the isomorphism $(x,h)\mapsto (x,gh)$ and hence, by Quillen's theorem B, $\pi_1(\mathcal{F}/*)\cong \ker \alpha$ and $\pi_0(\mathcal{F}/*)=G/\im \alpha$. In particular, $\mathcal{F}/*$ is connected if and only if $\alpha$ is an epimorphism, in which case $\mathcal{F}/*$ is the (connected) cover of $X$ corresponding to $\ker\alpha$.

\section{Poset splitting} \label{section_splitting}

In this section we will introduce the \emph{poset splitting} technique and give many interesting results that can be obtained from its application. Moreover, we will develop the necessary results to achieve in the next two sections the main goal of this article: a thorough study of the minimal finite models of the real projective plane, the torus and the Klein bottle which, in its first step, gives answers to two open questions.

The splitting technique consists in dividing a poset $X$ into two (not necessarily connected) subspaces, say $C$ and its complement, and from homotopical (or homological) information of $C$ and $X-C$ obtain homotopical or homological information of $X$.
This seems odd at first sight since one would expect a different situation, such as covering the space $X$ with two open subspaces. However, applying lemma \ref{lemma_subdiv_X_A} we will be able to extend arguments for open covers $\{U,V\}$ to any cover $\{C,D\}$ of $X$.

Before getting into the details of this extension, note that if $X$ is a poset and $U$ and $V$ are open subspaces of $X$ such that $X=U\cup V$ then $X$ is the non-Hausdorff homotopy pushout \cite[Definition 2.1]{FM} of the diagram $U\hookleftarrow U\cap V \hookrightarrow V$ and thus, by Thomason's theorem \cite{Tho}, $|\K(X)|$ is the homotopy pushout of the diagram $|\K(U)|\hookleftarrow |\K(U\cap V)| \hookrightarrow |\K(V)|$ (cf. \cite{FM}). Note also that $(|\K(X)|;|\K(U)|,|\K(V)|)$ is a CW-triad. Hence, several tools may be applied to obtain homotopical information of $X$ from homotopical information of $U$ and $V$. And considering opposite spaces one can apply the previous argument also if $U$ and $V$ are closed subspaces of $X$.

However, the requirement that $U$ and $V$ be open (or closed) subspaces of $X$ is too restrictive to tackle some combinatorial problems such as the minimality of finite models. To get rid of this hypothesis we may apply subdivision of posets as follows.

Let $C$ and $D$ be subspaces of $X$ such that $X=C\cup D$. Then $C'$ and $D'$ are open subspaces of $X'$ and $X'=(X'-C')\cup(X'-D')$. By the argument above, we might be able to obtain homotopical information of $X$ from $X'-C'$ and $X'-D'$, which by lemma \ref{lemma_subdiv_X_A}, are weak homotopy equivalent to $(X-C)'$ and $(X-D)'$ respectively. Hence, if $C$ and $D$ are disjoint subspaces of $X$ such that $X=C\cup D$ one may expect to obtain homotopical information of $X$ from $C'$ and $D'$, and hence from $C$ and $D$.

As a first simple example consider the following result.

\begin{prop}
Let $X$ be an Alexandroff T$_0$--space. Suppose that there exist subspaces $C,D\subseteq X$ such that $X=C\cup D$ and such that the morphisms $H_1(C)\rightarrow H_1(X)$ and $H_1(D)\rightarrow H_1(X)$ induced by the inclusion maps are trivial. Then $H_1(X)$ is a free abelian group.
\end{prop}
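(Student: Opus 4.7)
The plan is to use the splitting idea outlined just before the proposition: pass to the barycentric subdivision $X'$ and apply a Mayer--Vietoris argument to the closed cover $\{X'-C',\,X'-D'\}$ of $X'$. First, I would reduce to the case $C\cap D=\varnothing$ by replacing $D$ with $X-C$, which does not affect the hypotheses because the inclusion $X-C\hookrightarrow D$ factors $H_1(X-C)\to H_1(X)$ through the trivial map $H_1(D)\to H_1(X)$.

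By Lemma \ref{lemma_subdiv_X_A}, $C'$ and $D'$ are open in $X'$, so $A:=X'-C'$ and $B:=X'-D'$ are closed in $X'$. Disjointness of $C$ and $D$ forces $C'\cap D'=\varnothing$, hence $A\cup B=X'$. Moreover, every chain of $X'$ lies entirely in $A$ or entirely in $B$: writing $\tau_0$ for its minimum element, either $\tau_0$ meets $D$, so every chain in the sequence does and the whole chain lies in $A$, or $\tau_0\subseteq C$, so every chain in the sequence meets $C$ and the whole chain lies in $B$. Taking opposite spaces to turn $\{A,B\}$ into an open cover of $(X')^{\op}$ and invoking Thomason's theorem as in the discussion preceding the proposition, we obtain a CW-triad $(|\K(X')|;|\K(A)|,|\K(B)|)$ to which Mayer--Vietoris applies.

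Lemma \ref{lemma_subdiv_X_A}(2) provides weak equivalences $(X-C)'\to A$ and $(X-D)'\to B$, giving $H_1(A)\cong H_1(D)$ and $H_1(B)\cong H_1(C)$. By naturality of these equivalences with respect to the inclusions into $X'$, the Mayer--Vietoris map $H_1(A)\to H_1(X')\cong H_1(X)$ is identified with the inclusion-induced $H_1(D)\to H_1(X)$, and analogously for $B$; both vanish by hypothesis. The Mayer--Vietoris sequence
\[
H_1(A)\oplus H_1(B)\longrightarrow H_1(X')\longrightarrow H_0(A\cap B)\longrightarrow H_0(A)\oplus H_0(B)
\]
then shows that $H_1(X)\cong H_1(X')$ injects into the free abelian group $H_0(A\cap B)$, so $H_1(X)$ is free abelian.

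The step that requires most care is the verification that the Mayer--Vietoris restriction maps correspond to the inclusion-induced maps $H_1(C)\to H_1(X)$ and $H_1(D)\to H_1(X)$ under the identifications coming from Lemma \ref{lemma_subdiv_X_A}. This is a naturality check, but it is the conceptual heart of the argument: it is what legitimises applying a Mayer--Vietoris sequence, which properly belongs to an open (or CW) cover, to the arbitrary splitting $\{C,D\}$ of the original poset.
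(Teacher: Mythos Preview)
Your proof is correct and follows essentially the same route as the paper's: pass to $X'$, use Lemma~\ref{lemma_subdiv_X_A} to see that the inclusions $X'-C'\hookrightarrow X'$ and $X'-D'\hookrightarrow X'$ induce zero on $H_1$, and conclude via the Mayer--Vietoris sequence for the closed cover $\{X'-C',\,X'-D'\}$ of $X'$. Your explicit reduction to the case $C\cap D=\varnothing$ is a welcome addition, since the identity $(X'-C')\cup(X'-D')=X'$, used in both arguments, genuinely requires $C'\cap D'=\varnothing$; the paper's proof leaves this step implicit.
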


\begin{proof}
We may assume that $C,D\subsetneq X$. Observe that the inclusion $(X-C)'\to X'$ induces the trivial map in $H_1$ since $(X-C)'\subseteq D'$. Now, by \ref{lemma_subdiv_X_A}, it follows that the inclusion $X'-C'\to X'$ also induces the trivial map in $H_1$. Similarly, the inclusion $X'-D'\to X'$ induces the trivial map in $H_1$.

Since $X'-C'$ and $X'-D'$ are closed subspaces of $X'$ and $X'=(X'-C')\cup(X'-D')$, considering the opposite spaces we obtain the following portion of the Mayer-Vietoris exact sequence
$$H_1(X'-C')\oplus H_1(X'-D') \xrightarrow{0} H_1(X')\xrightarrow{\partial} H_0((X'-C')\cap (X'-D'))$$
And since $H_0((X'-C')\cap (X'-D'))$ is a free abelian group and $\partial$ is a monomorphism we obtain that $H_1(X')\cong H_1(X)$ is a free abelian group.
\end{proof}

A more sophisticated example of application of poset splitting is given by the following theorem about the homology groups of a cover of a poset which is obtained combining the splitting idea with theorem 5.1 of \cite{CO} and with the arguments of coverings of posets given in the second part of section 2.

\begin{theo} \label{theo_homology_cover_space}
Let $X$ be a connected Alexandroff T$_0$--space. Let $C\subsetneq X$ be a subspace and let $i:X-C\to X$ be the inclusion map. Let $G$ be a group and let $\alpha:\pi_1(X)\to G$ be an epimorphism such that $i_\ast(\pi_1(X-C,x_0))\subseteq \ker \alpha$ for all $x_0\in X-C$. Let $p:\widetilde X\to X$ be a covering map (with $\widetilde X$ connected) such that $p_\ast(\pi_1 (\widetilde X))=\ker \alpha$.

Then,
$$H_n(\widetilde X, p^{-1}(C))=\bigoplus_{G}H_n(X,C)$$
for all $n\in \N_0$.
\end{theo}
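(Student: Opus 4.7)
My plan is to combine the subdivision trick underlying poset splitting with the categorical description of covers from Section~2, and then to compute relative homology via Proposition~\ref{coro_quasicel_relativo}. First I reduce to the subdivisions: by Lemma~\ref{lemma_subdiv_X_A} the pairs $(X',C')$ and $(\widetilde X',(p^{-1}(C))')$ have $C'$ and $(p^{-1}(C))'$ open, and the canonical ``chain $\mapsto$ maximum'' maps of pairs give weak equivalences to $(X,C)$ and $(\widetilde X,p^{-1}(C))$, so it suffices to establish the formula for the primed pairs. With $\rho(\sigma)=|\sigma|-1$ both primed pairs are relative quasicellular (the reduced homology of each $\widehat U_\sigma^{X'}$ is the reduced homology of the boundary of a simplex), so Proposition~\ref{coro_quasicel_relativo} gives explicit chain complexes computing the relative homologies.

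Next I exploit the cover. Realize $\widetilde X = \mathcal F/\ast$ for $\mathcal F = \mathcal F_{T,\alpha}$. By the hypothesis $i_\ast(\pi_1(X-C,x_0))\subseteq\ker\alpha$ and the discussion on page~\pageref{functor_and_maximal_tree}, I may choose the maximal tree $T$ of $X$ to extend maximal trees on the connected components of $X-C$, so that $\mathcal F$ becomes trivial on each component of $X-C$. Then the underlying set of $\widetilde X$ is $X\times G$ with $(x,g)\le(y,h)$ iff $x\le y$ and $g=h\mathcal F(x\to y)$, the restriction $p^{-1}(X-C)$ is the disjoint union $\bigsqcup_{g\in G}(X-C)$ of posets, and each chain of $\widetilde X$ corresponds to a pair $(\sigma,g)\in \mathcal K(X)\times G$, the $g$ labeling the lift of (say) the minimum vertex. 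This bijection sends $(p^{-1}(C))'$ onto $C'\times G$, and for each chain $\tilde\sigma$ the projection $p'\colon\widehat U_{\tilde\sigma}^{\widetilde X'}\to\widehat U_\sigma^{X'}$ is an isomorphism of posets. Therefore the summands of the quasicellular chain complexes match term by term: $C_n(\widetilde X',(p^{-1}(C))')\cong \bigoplus_{g\in G} C_n(X',C')$ as abelian groups.

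It remains to match differentials and conclude. Since the differentials of Proposition~\ref{coro_quasicel_relativo} translate into the simplicial boundary of $\mathcal K(\widetilde X)$ quotiented by $\mathcal K(p^{-1}(C))$, on each summand the boundary agrees with the simplicial boundary on $X'$ except possibly for a twist by $\mathcal F$-values along edges of $\sigma$. Trivialization of $\mathcal F$ on $X-C$ kills all such twists coming from edges staying in $X-C$; the remaining twists, which involve edges crossing into $C$, are absorbed by the relative construction, since the corresponding boundary terms land in cells contained in $p^{-1}(C)$ and are killed in the quotient. Equivalently, at the topological level the cover $|\mathcal K(\widetilde X)|\to|\mathcal K(X)|$ is trivial over $|\mathcal K(X)|-|\mathcal K(C)|$ (via the weak equivalence with $|\mathcal K(X-C)|$), so
\[
|\mathcal K(\widetilde X)|\big/|\mathcal K(p^{-1}(C))| \,\simeq\, \bigvee_{g\in G}\, |\mathcal K(X)|\big/|\mathcal K(C)|,
\]
and taking reduced homology yields $H_n(\widetilde X,p^{-1}(C))\cong \bigoplus_G H_n(X,C)$. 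The main obstacle is precisely this last step: carefully checking that the $\mathcal F$-twist in the differential descends trivially to the relative complex, or equivalently that the topological wedge identification holds. Either route leans on the combinatorial covering picture of Section~2 together with the arguments of Theorem~5.1 of \cite{CO} to produce the required quasi-isomorphism of chain complexes.
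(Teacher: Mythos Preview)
Your overall strategy matches the paper's, but the differential-matching step has a genuine gap. You label a lift $\tilde\sigma$ by the $G$-coordinate of its \emph{minimum} vertex and then assert that any nontrivial $\mathcal F$-twist in the boundary occurs only on faces lying in $(p^{-1}(C))'$ and is therefore killed in the relative complex. This is false: take $\sigma=\{x_0<x_1<x_2\}$ with $x_0\in C$ and $x_1,x_2\in X-C$. The face $\{x_1,x_2\}$ is \emph{not} in $C'$, yet under your labeling its $G$-coordinate is $g_1$, which differs from that of $\sigma$ by $\mathcal F(x_0\to x_1)$; since $x_0\in C$ this value need not be trivial. So your claimed identification $C_*(\widetilde X',(p^{-1}(C))')\cong\bigoplus_G C_*(X',C')$ is not, with this labeling, a decomposition of chain complexes. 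The alternative topological wedge identification you offer is also not justified as written.

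The paper sidesteps the issue by a cleaner reduction: it first replaces $(X,C)$ by its subdivision so that $C$ becomes \emph{open} in $X$ and $(X,C)$ is itself relative quasicellular, and only \emph{afterwards} constructs the cover $\widetilde X=\mathcal F/\ast$. Now the generators of the quasicellular complex for $(\widetilde X,p^{-1}(C))$ are indexed directly by points of $\widetilde X-p^{-1}(C)=(X-C)\times G$ (not by chains), and the differential of Proposition~\ref{coro_quasicel_relativo} only relates $(x,g)$ to $(y,h)$ with $y<x$ and both $x,y\in X-C$; triviality of $\mathcal F|_{X-C}$ then forces $h=g$ and the complex splits on the nose. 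If you prefer to keep your double-subdivision setup, the easy fix is to label each chain by the $G$-coordinate of its minimum vertex \emph{lying in $X-C$} (such a vertex exists precisely because $\sigma\notin C'$): then the only face whose label could change involves two comparable points of $X-C$, and again $\mathcal F|_{X-C}$ trivial finishes the argument.
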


\begin{proof}
By lemma \ref{lemma_subdiv_X_A}, taking the barycentric subdivisions of the posets $X$ and $C$ we may suppose that $C$ is an open subspace of $X$ and that $X$ is a quasicellular poset and hence that $(X,C)$ is a relative quasicellular pair. 

By the arguments in the previous section, there exists a functor $\mathcal{F}:X\to G$ such that $\mathcal{F}|_{X-C}$ is trivial and such that the homotopy fiber $\mathcal{F}/\ast$ is a locally finite T$_0$--space which is the covering space of $X$ corresponding to the subgroup $\ker \alpha$ of $\pi_1(X)$. We may assume that $\widetilde X=\mathcal{F}/\ast$.

Let $\rho$ be a quasicellular morphism for $(X,C)$. Note that $(\widetilde X,p^{-1}(C))$ is a relative quasicellular pair with quasicellular morphism induced by $\rho$. 

Since $\mathcal{F}|_{X-C}$ is trivial we obtain that 
$$\left(C_n(\widetilde X,p^{-1}(C)),\widetilde d_n\right)_{n\in\Z}=\left( \bigoplus_{G}C_n(X,C), \bigoplus_{G}d_n \right)_{n\in\Z}$$
where $d_n$ and $\widetilde d_n$ are the differentials defined in \ref{coro_quasicel_relativo} for the relative quasicellular pairs $(X,C)$ and $(\widetilde X,p^{-1}(C))$ respectively. The result follows.
\end{proof}

From this theorem we obtain the following corollary which will be applied later.

\begin{coro} \label{coro_homology_cover_space}
Let $X$ be a connected Alexandroff T$_0$--space. Let $C\subsetneq X$ be a subspace such that the inclusion of each connected component of $X-C$ in $X$ induces the trivial morphism between the fundamental groups. Let $p:\widetilde X\to X$ be the universal cover of $X$. 

Then
$$H_n(\widetilde X, p^{-1}(C))\cong\bigoplus_{\pi_1(X)}H_n(X,C)$$
for all $n\in \N_0$.
\end{coro}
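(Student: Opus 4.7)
The plan is to derive this corollary directly from Theorem \ref{theo_homology_cover_space} by specializing the group $G$ and the homomorphism $\alpha$ to the canonical universal choice. Concretely, I would set $G=\pi_1(X)$ and $\alpha=\id_{\pi_1(X)}\colon\pi_1(X)\to\pi_1(X)$. Then $\alpha$ is trivially an epimorphism and $\ker\alpha$ is the trivial subgroup, so the universal cover $p\colon\widetilde X\to X$ of the hypothesis automatically satisfies $p_\ast(\pi_1(\widetilde X))=\{1\}=\ker\alpha$, matching the covering-theoretic requirement of the theorem.

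The only hypothesis that requires a brief unpacking is the $\pi_1$-triviality condition $i_\ast(\pi_1(X-C,x_0))\subseteq\ker\alpha$ for all $x_0\in X-C$. With $\ker\alpha=\{1\}$, this reduces to asking that $i_\ast(\pi_1(X-C,x_0))=\{1\}$ for every basepoint $x_0\in X-C$. Since $\pi_1(X-C,x_0)$ sees only the path component of $x_0$, this is exactly the statement that the inclusion of each connected component of $X-C$ into $X$ induces the zero map on fundamental groups, which is the hypothesis of the corollary. Hence all hypotheses of Theorem \ref{theo_homology_cover_space} are met.

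Applying the theorem then gives $H_n(\widetilde X,p^{-1}(C))\cong\bigoplus_{G}H_n(X,C)=\bigoplus_{\pi_1(X)}H_n(X,C)$ for every $n\in\N_0$, which is the desired conclusion. There is no genuine obstacle here beyond correctly matching the notation of the two statements; the mild technical point to double-check is simply that, in the theorem's hypothesis, requiring $i_\ast(\pi_1(X-C,x_0))\subseteq\ker\alpha$ \emph{for all} $x_0$ is equivalent to requiring it for one basepoint in each connected component of $X-C$, so that the corollary's component-wise formulation is interchangeable with the theorem's pointwise formulation.
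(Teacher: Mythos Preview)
Your proposal is correct and matches the paper's intended derivation: the corollary is stated immediately after Theorem \ref{theo_homology_cover_space} without a separate proof, precisely because it is the specialization $G=\pi_1(X)$, $\alpha=\id$ that you describe. Your verification that the componentwise hypothesis of the corollary is equivalent to the pointwise hypothesis of the theorem (using that path components and connected components agree in Alexandroff spaces) is exactly the routine check needed.
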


This corollary motivates the following definition.

\begin{definition}
Let $X$ be a connected Alexandroff T$_0$--space and let $C$ and $D$ be non-empty subspaces of $X$ such that $X=C\cup D$. We say that the 3-tuple $(X;C,D)$ is a \emph{splitting triad} if the inclusion of each connected component of $D$ in $X$ induces the trivial morphism between the fundamental groups.
\end{definition}

Observe that a splitting triad $(X;C,D)$ gives rise to a triad $(X;C,X-C)$ that \emph{splits} $X$ into two disjoint subspaces satisfying the hypothesis of the previous corollary. In this case, $(X;C,X-C)$ is also a splitting triad. 

We will be specially interested in splitting triads $(X;C,X-C)$ such that the subspace $C$ satisfies additional hypotheses. Under adequate conditions we will be able to obtain concrete homotopical information of $X$ as the following Hurewicz-type theorem shows.

\begin{theo} \label{theo_pi_2_v2}
Let $n\in\N$ with $n\geq 2$ and let $(X;C,D)$ be a splitting triad such that
\begin{itemize}
\item $\pi_r(C,c_0)\cong H_r(C)=0$ for all $2\leq r\leq n$ and for all $c_0\in C$
\item $\pi_1(C,c_0)$ is an abelian group for all $c_0\in C$
\item the inclusion map $C\to X$ induces a monomorphism $H_1(C)\to H_1(X)$
\end{itemize}
If $l\in\N$ is such that $2\leq l\leq n$ and $H_j(X)=0$ for all $2\leq j\leq l-1$ then $\pi_j(X)=0$ for all $2\leq j\leq l-1$ and 
$\pi_l(X)\cong H_l(X)\otimes\Z[\pi_1(X)]$. In particular, $\pi_2(X) \cong H_2(X)\otimes\Z[\pi_1(X)]$.
\end{theo}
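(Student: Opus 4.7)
The plan is to reduce everything to the classical (simply-connected) Hurewicz theorem applied to the universal cover $p:\widetilde X\to X$, using the fact that $p$ induces isomorphisms $\pi_j(\widetilde X)\cong \pi_j(X)$ for $j\geq 2$. So it suffices to show that $H_j(\widetilde X)=0$ for $2\leq j\leq l-1$ and that $H_l(\widetilde X)\cong H_l(X)\otimes \Z[\pi_1(X)]$. The bridge is corollary \ref{coro_homology_cover_space}, which applies to the splitting triad $(X;C,X-C)$ (also a splitting triad by the remark after the definition) and gives $H_n(\widetilde X,p^{-1}(C))\cong \bigoplus_{\pi_1(X)} H_n(X,C)$.

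The preliminary step is to understand $p^{-1}(C)$. For each connected component $C_0$ of $C$, the abelianness of $\pi_1(C_0)$ makes its Hurewicz map an isomorphism, so the composition $\pi_1(C_0)\to \pi_1(X)\to H_1(X)$ factors as $\pi_1(C_0)\cong H_1(C_0)\hookrightarrow H_1(C)\hookrightarrow H_1(X)$ and is therefore injective; hence $\pi_1(C_0)\to \pi_1(X)$ is itself injective. Consequently, each connected component of $p^{-1}(C_0)$ is the universal cover of $C_0$, which is simply-connected and satisfies $\pi_r=0$ for $2\leq r\leq n$ (since $\pi_r$ is preserved by coverings for $r\geq 2$). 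The classical Hurewicz theorem applied componentwise then yields $H_j(p^{-1}(C))=0$ for $1\leq j\leq n$.

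Next, from the long exact sequence of the pair $(\widetilde X,p^{-1}(C))$ and the vanishing just established, I get $H_j(\widetilde X)\cong H_j(\widetilde X,p^{-1}(C))$ for $2\leq j\leq n$. On the base, the long exact sequence of $(X,C)$ combined with $H_j(C)=0$ for $2\leq j\leq n$ and the injectivity of $H_1(C)\to H_1(X)$ gives $H_j(X)\cong H_j(X,C)$ in the same range. Applying corollary \ref{coro_homology_cover_space} then produces
\[H_j(\widetilde X)\cong H_j(X)\otimes \Z[\pi_1(X)] \qquad\text{for } 2\leq j\leq n.\]
In particular, the hypothesis $H_j(X)=0$ for $2\leq j\leq l-1$ transfers to $\widetilde X$, and since $\widetilde X$ is simply-connected the classical Hurewicz theorem yields $\pi_j(\widetilde X)=0$ for $2\leq j\leq l-1$ and $\pi_l(\widetilde X)\cong H_l(\widetilde X)$, which translates back to the stated conclusion for $X$.

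The main subtlety I anticipate is not any single calculation, but the componentwise bookkeeping for $C$ and $p^{-1}(C)$ and the verification that the global hypotheses on $C$ give the required $\pi_1$-injectivity on each component. The abelianness hypothesis on $\pi_1(C)$ is essential here: without it, the $H_1$-monomorphism assumption would only control the image of each $\pi_1(C_0)$ in the abelianization of $\pi_1(X)$, which is insufficient to identify the components of $p^{-1}(C_0)$ as universal covers.
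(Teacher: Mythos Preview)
Your proof is correct and follows essentially the same route as the paper's own argument: pass to the universal cover, use the abelianness of $\pi_1(C,c_0)$ together with the $H_1$-injectivity to deduce that $\pi_1(C_0)\to\pi_1(X)$ is injective on each component, conclude that the components of $p^{-1}(C)$ are universal covers and hence $n$-connected, then combine the long exact sequences of $(\widetilde X,p^{-1}(C))$ and $(X,C)$ with corollary \ref{coro_homology_cover_space} to compute $H_j(\widetilde X)$ and finish with the classical Hurewicz theorem. Your write-up is in fact slightly more explicit than the paper's (you spell out the Hurewicz step on both $p^{-1}(C)$ and $\widetilde X$, which the paper leaves implicit in ``The result follows''), but there is no substantive difference.
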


\begin{proof}
Let $p:\widetilde{X}\to X$ be the universal cover of $X$.

Let $i:C\to X$ be the inclusion map. Since $\pi_1(C,c_0)$ is an abelian group for all $c_0\in C$ and $i_\ast:H_1(C)\to H_1(X)$ is a monomorphism we obtain that $i_\ast:\pi_1(C,c_0)\to \pi_1(X,c_0)$ is a monomorphism for all $c_0\in C$. Hence, for every connected component $C'$ of $C$ and  every connected component $D$ of $p^{-1}(C')$ we obtain that $p|_D:D\to C'$ is a universal cover and thus $D$ is $n$-connected since $\pi_r(C,c_0)=0$ for all $2\leq r\leq n$ and for all $c_0\in C$.

Therefore, from the long exact sequence in homology for the topological pair $(\widetilde{X},p^{-1}(C))$ we obtain that $H_r(\widetilde{X})\cong H_r(\widetilde{X},p^{-1}(C))$ for all $2\leq r\leq n$.

On the other hand, since the inclusion of each connected component of $X-C$ in $X$ induces the trivial morphism between the fundamental groups, from \ref{theo_homology_cover_space} we obtain that $\displaystyle H_r(\widetilde{X},p^{-1}(C))\cong \bigoplus_{\pi_1(X)} H_r(X,C)$ for all $r\in\N_0$.

Finally, $H_r(X,C)\cong H_r(X)$ for all $2\leq r\leq n$ since $H_r(C)=0$ for all $2\leq r\leq n$ and $i_\ast:H_1(C)\to H_1(X)$ is a monomorphism. The result follows.
\end{proof}

Note that the hypotheses of the previous theorem are fulfilled if all the connected components of $C$ are $2$--connected.

The following is a simple example of application of \ref{theo_pi_2_v2}.

\begin{ex}
Consider the poset $Z$ defined by the following Hasse diagram
\[
\xymatrix@R=40pt{
b\bullet\phantom{b}\ar@{-}[d]\ar@{-}[dr] & a\bullet\phantom{a}\ar@{-}[dl]\ar@{-}[d]\ar@{-}[dr]\ar@{-}[drr] & c\bullet\phantom{c}\ar@{-}[d]\ar@{-}[dr] \\
\bullet\ar@{-}[d]\ar@{-}[dr] & \bullet \ar@{-}[dl]\ar@{-}[d] & \bullet & \bullet \\
\bullet & \bullet \\
}
\]
We wish to compute $\pi_2(Z)$.

Clearly, the inclusion of each connected component of $Z-U_a$ in $Z$ induces the trivial morphism between the fundamental groups, and since $U_a$ is contractible, the previous theorem applies and yields
\[\pi_2(Z)=H_2(Z)\otimes\Z[\pi_1(Z)].\]

Now, it is easy to verify that $H_2(Z)\cong\Z$ (cf. example 5.4 of \cite{CO}).

On the other hand, the fundamental group of $Z$ is also easy to compute. Considering the open sets $U_a\cup U_c$ and $U_b$ and applying the van Kampen theorem we obtain that
\[\pi_1(Z) \cong \pi_1(U_a\cup U_c) \cong \pi_1(U_c\cup\{a\}) \cong \Z\ .\]
Here, the first isomorphism holds since the map $\pi_1(U_b\cap (U_a\cup U_c))\to \pi_1(U_a\cup U_c)$ induced by the inclusion is trivial as it can be factorized through $\pi_1(U_a)$. And the second isomorphism holds since $U_c\cup\{a\}$ can be obtained from $U_a\cup U_c$ by removing beat points.

Thus, $\pi_2(Z)=\Z[\Z]$ (indeed, $\mathcal{K}(Z)$ is homeomorphic to $S^2\vee S^1$).
\end{ex}

The following theorem is another example of application of splitting triads. As a corollary we will obtain a generalization of a result of R. Brown.

\begin{theo} \label{theo_splitting_free_fundamental_group}
Let $X$ be a connected Alexandroff T$_0$--space. Suppose that there exist nonempty subspaces $C,D\subseteq X$ such that $X=C\cup D$ and such that the inclusions of each connected component of $C$ and $D$ in $X$ induce the trivial morphism between the fundamental groups. Then $\pi_1(X)$ is a free group.
\end{theo}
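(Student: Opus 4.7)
The plan is to reduce to a disjoint splitting, obtain a genuine open cover via barycentric subdivision and duality, and conclude with Brown's groupoid Seifert--van Kampen theorem.

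First, I would observe that every connected component of $X\setminus C$ is contained in some connected component of $D$, so its inclusion into $X$ factors through the latter and is therefore $\pi_1$-trivial; hence we may assume $C\cap D=\varnothing$.

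Next, since $X$ is weakly equivalent to $(X')^{\op}$, I work inside the latter. The subspaces $\widetilde U:=X'\setminus D'$ and $\widetilde V:=X'\setminus C'$ are up-sets of $X'$, hence open in $(X')^{\op}$, and they cover $(X')^{\op}$. By Lemma \ref{lemma_subdiv_X_A}, the inclusions $C'=(X\setminus D)'\hookrightarrow\widetilde U$ and $D'=(X\setminus C)'\hookrightarrow\widetilde V$ are weak homotopy equivalences, so $\widetilde U\simeq C$ and $\widetilde V\simeq D$, and the inclusions of $\widetilde U$ and $\widetilde V$ into $(X')^{\op}$ correspond, up to homotopy, to the inclusions $C,D\hookrightarrow X$. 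The theorem's hypothesis then asserts that every loop in a connected component of $\widetilde U$ or $\widetilde V$ is null-homotopic in $(X')^{\op}$.

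Finally, I apply Brown's groupoid Seifert--van Kampen theorem to the open cover $\{\widetilde U,\widetilde V\}$ of $(X')^{\op}$, with a set $X_0\subseteq\widetilde U\cap\widetilde V$ of basepoints meeting each connected component of $\widetilde U$, $\widetilde V$ and $\widetilde U\cap\widetilde V$. The triviality hypothesis forces each of the groupoid morphisms $\pi(\widetilde U,X_0)\to\pi((X')^{\op},X_0)$ and $\pi(\widetilde V,X_0)\to\pi((X')^{\op},X_0)$ to factor through the corresponding indiscrete component groupoid on $X_0$, and by universality this reduces the van Kampen pushout to the pushout (in groupoids) of two indiscrete groupoids over a third. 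This latter pushout is the fundamental groupoid of the bipartite graph $\Gamma$ with vertex set $\pi_0(\widetilde U)\sqcup\pi_0(\widetilde V)$ and one edge per element of $\pi_0(\widetilde U\cap\widetilde V)$, each joining the components of $\widetilde U$ and $\widetilde V$ in which it lies. Since the fundamental group of a graph is always free, $\pi_1(X)$ is free. The hardest step is this last reduction of the groupoid pushout to a graph groupoid, which requires a careful manipulation of the universal property of pushouts of (indiscrete) groupoids.
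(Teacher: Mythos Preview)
Your proof is correct and takes a genuinely different route from the paper's. Both arguments begin with the same reduction to a disjoint splitting and pass through the barycentric subdivision, but then diverge. The paper works categorically: it chooses a maximal tree $\widetilde T$ in $X$ extending maximal forests in the components of $C$ and $X\setminus C$, forms the functor $\mathcal F_{\widetilde T,\id}\colon X\to\pi_1(X)$, and uses the hypothesis to see that $\mathcal F$ is trivial on $C$ and on $X\setminus C$. It then collapses each such component to a point, obtaining a quotient category $\mathcal C$ with no nontrivial compositions (hence $B\mathcal C$ is a graph); since $\mathcal F$ factors through $\mathcal C$ and induces the identity on $\pi_1(X)$, one gets an injection $\pi_1(X)\hookrightarrow\pi_1(\mathcal C)$, and subgroups of free groups are free. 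Your approach via Brown's groupoid van Kampen is more topological and yields directly an isomorphism $\pi_1(X)\cong\pi_1(\Gamma)$ rather than an embedding; in fact the paper's quotient category $\mathcal C$ is essentially the same bipartite graph as your $\Gamma$, so the two arguments converge on the same combinatorial object. Your route is cleaner for readers already fluent with the groupoid van Kampen theorem, while the paper's is self-contained within its categorical preliminaries. You are right that the crux is the pushout-reduction step: one must verify that the canonical map from $G\ast_K H$ to $I_G\ast_{I_K}I_H$ (the pushout of the associated indiscrete component groupoids) admits an inverse, and this is precisely where the $\pi_1$-triviality hypothesis on both $C$ and $D$ is used.
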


\begin{proof}
We may assume that $C,D\subsetneq X$. Since $X-C\subseteq D$ then the inclusion of each connected component of $X-C$ in $X$ induces the trivial morphism between the fundamental groups. Thus, by lemma \ref{lemma_subdiv_X_A}, the inclusion of each component of $C'$ and $X'-C'$ in $X'$ induces the trivial morphism between the fundamental groups. It follows that, without loss of generality, we can assume that $X$ is a locally finite T$_0$--space and that $C$ is an open subspace of $X$.

Choosing a maximal tree on every connected component of $C$ and $X-C$ we obtain a forest $T$ which can be extended to a maximal tree $\widetilde{T}$ of $X$. 
Consider the functor $\mathcal{F}=\mathcal{F}_{\widetilde{T},\id}:X\to \pi_1(X)$ associated to $\widetilde{T}$ and the identity morphism $\id:\pi_1(X)\to\pi_1(X)$. Since the inclusion of each connected component of $C$ and $X-C$ induces the trivial morphism between fundamental groups, by the argument in page \pageref{functor_and_maximal_tree} we obtain that $\mathcal{F}$ is trivial on $C$ and $X-C$.

We define an equivalence relation on $X$ as follows. Two objects $x,y\in \obj(X)$ are equivalent if and only if they both lie in the same connected component of $C$ or $X-C$ and two arrows $\alpha,\beta\in \mor(X)$ are equivalent if and only if their domains and codomains are respectively equivalent and $\mathcal{F}(\alpha)=\mathcal{F}(\beta)$. For $x\in \obj(X)$ and $\alpha\in \mor(X)$ we will denote the equivalence class of $x$ by $[x]$ and the equivalence class of $\alpha$ by $[\alpha]$.

Let $\CC$ be the category whose objects are the equivalence classes of objects of $X$ and whose morphisms are the equivalence classes of arrows of $X$, that is, $\Hom_{\CC}([x],[y])=\{[\alpha]:\alpha\in \Hom_X(x',y'), x\in [x], y'\in [y]\}$ for $[x],[y] \in \obj(\CC)$. Observe that since $\mathcal{F}$ is trivial on every connected component of $C$ and $X-C$, then the domain and codomain of every non-identity morphism in $\CC$ are the class of an element of $C$ and the class of an element of $X-C$ respectively.
In particular, $\CC$ does not have non-trivial compositions.  It follows that every non-degenerate simplex of the (simplicial) nerve of $\CC$ has dimension at most $1$ and hence the classifying space of $\CC$ is homotopically equivalent to a wedge sum of circumferences. Consequently the fundamental group of $\CC$ is a free group.

Let $\phi:X\to \CC$ be the function that maps every object and every morphism of $X$ to its class. It is easy to check that $\phi$ is a well defined functor from $X$ to $\CC$.

It is clear that $\mathcal{F}$ factors through $\mathcal{C}$ as $\mathcal{F}=\widetilde{\mathcal{F}} \phi$. Since $\mathcal{F}$ induces the identity on $\pi_1(X)$ we obtain that $\phi_*:\pi_1(X)\to \pi_1(\mathcal{C})$ is a monomorphism. And since $\pi_1(\mathcal{C})$ is a free group, it follows that $\pi_1(X)$ is also a free group as desired.
\end{proof}

It is not hard to prove that the categories $L\CC$ and $LX/LT$ of the previous proof are isomorphic groupoids, and that $\pi_1(X)\cong \pi_1(\CC)$.

It is important to observe that results \ref{theo_homology_cover_space}, \ref{coro_homology_cover_space}, \ref{theo_pi_2_v2} and \ref{theo_splitting_free_fundamental_group} also hold for arbitrary topological spaces replacing the corresponding splitting triads by excisive triads $(X;X_1,X_2)$ such that the inclusion $X_2 \hookrightarrow X$ induces the trivial map between the fundamental groups for any choice of the base point of $X_2$. Indeed, such an excisive triad may be replaced by a weak equivalent CW-triad $(Z;A,B)$ such that $Z$ is a regular CW-complex, to obtain then a splitting triad $(\X(Z);\X(A),\X(B))$.

In particular, \ref{theo_splitting_free_fundamental_group} yields the following generalization of corollary 3.7 of \cite{Bro}.

\begin{coro}
Let $X$ be a path-connected topological space, let $x_0\in X$ and let $X_1$ and $X_2$ be subspaces of $X$ such that their interiors cover $X$ and such that for $j\in\{1,2\}$ the inclusion of each path-connected component of $X_j$ in $X$ induces the trivial morphism between the fundamental groups. Then, $\pi_1(X,x_0)$ is a free group.
\end{coro}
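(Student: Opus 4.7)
The plan is to reduce the general topological statement to the finite/combinatorial statement already proved as Theorem \ref{theo_splitting_free_fundamental_group}, following the recipe sketched just before the corollary: replace the excisive triad by a weakly equivalent CW-triad with regular CW structure, and then pass to face posets to obtain a splitting triad.

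First I would verify that the hypothesis that the interiors of $X_1$ and $X_2$ cover $X$ makes $(X;X_1,X_2)$ an excisive triad, so that one can apply CW-approximation for triads (e.g. the classical result that every excisive triad admits a weak equivalence from a CW-triad $(Z;A,B)$, and moreover $Z$ can be chosen to be a regular CW-complex by taking a simplicial approximation, or by subdividing). This gives a map $f:(Z;A,B)\to (X;X_1,X_2)$ such that $f$, $f|_A$ and $f|_B$ are weak homotopy equivalences onto the corresponding pieces, with $A\cup B=Z$.

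Next I would apply the face poset functor $\X(\cdot)$ and use McCord's theorem: $\X(Z)$ is weakly equivalent to $Z$, and similarly $\X(A)$ and $\X(B)$ are weakly equivalent to $A$ and $B$, respectively. Moreover $\X(Z)=\X(A)\cup \X(B)$. The critical point is to check that the triviality condition transfers: if $C$ is a path-connected component of $X_j$, then (after the CW replacement and subdivision) the corresponding component of $\X(A)$ or $\X(B)$ is weak equivalent to a path-connected component of $X_j$, and the inclusion into $\X(Z)$ corresponds under the weak equivalences to the inclusion of that component into $X$, which by hypothesis induces the trivial morphism on $\pi_1$. Thus $(\X(Z);\X(A),\X(B))$ is a splitting triad in the sense of the definition preceding Theorem \ref{theo_pi_2_v2}.

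Now Theorem \ref{theo_splitting_free_fundamental_group} applies directly to the Alexandroff T$_0$--space $\X(Z)$ with the subspaces $\X(A)$ and $\X(B)$, yielding that $\pi_1(\X(Z))$ is a free group. Since $\X(Z)$ is weakly homotopy equivalent to $Z$ which is weakly homotopy equivalent to $X$, we conclude that $\pi_1(X,x_0)$ is isomorphic to $\pi_1(\X(Z))$, hence is free.

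The main obstacle I anticipate is the bookkeeping of the CW-approximation step: ensuring that the approximating CW-triad can be taken \emph{regular} (so that $\X(Z)$ is weak equivalent to $Z$ via McCord's theorem) and that the path-component-wise triviality of the $\pi_1$-inclusions is preserved by the approximation. This can be handled by taking a simplicial model of $(X;X_1,X_2)$, for instance via the singular simplicial set and its geometric realization, or by working with a CW-triad first and then subdividing to make it simplicial/regular; the induced maps on path-components then match up and the triviality condition transfers componentwise.
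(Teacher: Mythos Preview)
Your proposal is correct and follows exactly the approach indicated in the paper: the paper does not give a separate proof of this corollary but rather states (in the paragraph immediately preceding it) that Theorem \ref{theo_splitting_free_fundamental_group} extends to arbitrary excisive triads by replacing $(X;X_1,X_2)$ with a weak equivalent regular CW-triad $(Z;A,B)$ and then passing to the face-poset splitting triad $(\X(Z);\X(A),\X(B))$. Your write-up simply fleshes out that sketch, including the check that the componentwise $\pi_1$-triviality transfers along the CW-approximation, which the paper leaves implicit.
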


And from \ref{theo_pi_2_v2} we obtain the following generalization of Hurewicz's theorem.

\begin{theo} \label{theo_pi_2_v2_spaces}
Let $X$ be a topological space and let $n\in\N$ with $n\geq 2$. Suppose that there exist non-empty subspaces $C,D\subseteq X$ such that their interiors cover $X$ and such that they satisfy the following:
\begin{itemize}
\item The subspace $C$ is $n$--connected.
\item The inclusion of each connected component of $D$ into $X$ induces the trivial map between the fundamental groups.
\end{itemize}
If $l\in\N$ is such that $2\leq l\leq n$ and $H_j(X)=0$ for all $2\leq j\leq l-1$ then $\pi_j(X)=0$ for all $2\leq j\leq l-1$ and 
$\pi_l(X)\cong H_l(X)\otimes\Z[\pi_1(X)]$. In particular, $\pi_2(X) \cong H_2(X)\otimes\Z[\pi_1(X)]$.
\end{theo}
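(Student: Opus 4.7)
The plan is to leverage the remark in the paragraph preceding this statement: results proved for splitting triads of posets extend to arbitrary topological spaces via CW-approximation of excisive triads. So I would reduce Theorem \ref{theo_pi_2_v2_spaces} to Theorem \ref{theo_pi_2_v2} by transferring the hypotheses and conclusion along a suitable weak equivalence.

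First I would replace $(X;C,D)$ by a weakly equivalent triad $(Z;A,B)$ in which $Z$ is a regular CW-complex and $A,B$ are subcomplexes with $Z = A\cup B$. Since the interiors of $C$ and $D$ cover $X$, the triad is excisive, so the natural map from the homotopy pushout of $C \hookleftarrow C\cap D \hookrightarrow D$ to $X$ is a weak homotopy equivalence. By CW-approximating $C\cap D$ first and then extending to compatible CW-approximations of $C$ and $D$ via standard obstruction theory, one obtains a CW-triad weakly equivalent to $(X;C,D)$. A further simplicial approximation ensures that $Z$ can be taken to be a regular CW-complex with $A$ and $B$ subcomplexes. By Whitehead's theorem, the two hypotheses transfer: $A$ remains $n$-connected, and the inclusion of each connected component of $B$ into $Z$ still induces the trivial map on $\pi_1$.

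Next I would pass to face posets. Since $Z$ is a regular CW-complex, McCord's theorem yields weak equivalences $|\K(\X(Z))|\to Z$, $|\K(\X(A))|\to A$, $|\K(\X(B))|\to B$, compatible with inclusions, so $\X(Z) = \X(A)\cup \X(B)$ and $(\X(Z);\X(A),\X(B))$ is a splitting triad of posets (the $\pi_1$-triviality on components of $\X(B)$ being inherited from that of $B$). Moreover, $\X(A)$ is $n$-connected, so $\pi_r(\X(A)) = H_r(\X(A)) = 0$ for $2\leq r\leq n$, $\pi_1(\X(A))$ is trivially abelian, and the map $H_1(\X(A))\to H_1(\X(Z))$ is vacuously a monomorphism since its domain vanishes. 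Hence the hypotheses of Theorem \ref{theo_pi_2_v2} are met. That theorem then yields $\pi_j(\X(Z)) = 0$ for $2\leq j\leq l-1$ and $\pi_l(\X(Z)) \cong H_l(\X(Z))\otimes \Z[\pi_1(\X(Z))]$, and transferring along the weak equivalence $X\simeq \X(Z)$ gives the conclusion for $X$.

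The main obstacle is the first step: producing a single regular CW-triad weakly equivalent to $(X;C,D)$ in a way that preserves the whole triadic structure, rather than merely CW-approximating the three pieces independently. This is a standard but technical exercise. The key input is the excisiveness hypothesis, which allows one to replace $X$ by the homotopy pushout of $C\hookleftarrow C\cap D \hookrightarrow D$; once this reduction is made, the CW-approximation can be performed coherently across the triad, and the regularity is achieved by a triangulation of the resulting CW-complex.
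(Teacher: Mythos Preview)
Your proposal is correct and follows exactly the route indicated in the paper. The paper does not give a separate proof of Theorem~\ref{theo_pi_2_v2_spaces}; it is deduced directly from the remark preceding it, which says that an excisive triad may be replaced by a weakly equivalent regular CW-triad $(Z;A,B)$ and then by the splitting triad $(\X(Z);\X(A),\X(B))$, after which Theorem~\ref{theo_pi_2_v2} applies. Your write-up spells out the details of that reduction (including why the hypotheses of Theorem~\ref{theo_pi_2_v2} are satisfied once $C$ is $n$-connected), but the strategy is identical.
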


\subsection*{Splitting properties}

We will now define certain properties on splitting triads of finite and connected T$_0$--spaces which will be called \emph{splitting properties}. This properties are essential for the results of the next two sections since they will provide the link between Topology and Combinatorics that is needed to solve the problems on minimality of finite models we address in this article.

\begin{definition} \label{def_S1_S2}
Let $X$ be a finite and connected T$_0$--space and let $(X;C,D)$ be a splitting triad. We define the following properties on $(X;C,D)$:
\begin{itemize}
\item[{\Spl[1]}] The inclusion of each connected component of $C$ in $X$ induces the trivial morphism between the fundamental groups, that is, the triad $(X;D,C)$ is also a splitting triad.
\item[{\Spl[2]}] At most one of the connected components of $C$ is weak homotopy equivalent to $S^1$ and the rest of them are homotopically trivial.
\end{itemize}
We will also say that a finite and connected T$_0$--space $X$ satisfies \Spl[1] (resp. \Spl[2]) if there exists a splitting triad  $(X;C,D)$ that satisfies \Spl[1] (resp. \Spl[2]).
\end{definition}

Note that, by \ref{theo_splitting_free_fundamental_group}, if $X$ satisfies \Spl[1] then $\pi_1(X)$ is a free group.

The following remark sums up easy facts about conditions \Spl[1] and \Spl[2]. Recall that if $X$ is a finite poset, the \emph{height} of $X$, denoted $h(X)$, is one less than the maximum cardinality of a chain of $X$. Note that $h(X)=\dim (\K(X))$.

\begin{rem} \label{rem_mxl_and_cond} 
Let $X$ be a finite and connected T$_0$--space.
\begin{enumerate}
\item If $X$ has a maximum, then $X$ satisfies \Spl[1] and \Spl[2]. 
\item If $h(X)= 1$, then $X$ satisfies \Spl[1] and \Spl[2]. Indeed, in this case if $C$ is the set of maximal points of $X$ then the splitting triad $(X;C,X-C)$ satisfies \Spl[1] and \Spl[2].
\item If $X$ is the union of two contractible subspaces then $X$ satisfies \Spl[1] and \Spl[2]. In particular, if $X$ has at most $2$ maximal points (or at most $2$ minimal points) then $X$ satisfies \Spl[1] and \Spl[2].
\end{enumerate}
\end{rem}

\begin{ex}
If $X$ is a finite model of the real projective plane then $X$ does not satisfy \Spl[1] since its fundamental group is not free. In particular, the space $\Pp^2_2$ of figure \ref{fig_P2_2} of page \pageref{fig_P2_2} does not satisfy \Spl[1]. However, it satisfies \Spl[2] since the splitting triad $(\Pp^2_2;\Pp^2_2-U_{a_1},U_{a_1})$ satisfies \Spl[2].
\end{ex}

\begin{prop} \label{prop_S123_beat_points}
Let $X$ be a finite and connected T$_0$--space and let $a$ be a beat point of $X$. If $X-\{a\}$ satisfies \Spl[1] \textnormal{(}resp. \Spl[2]\textnormal{)} then $X$ satisfies \Spl[1] \textnormal{(}resp. \Spl[2]\textnormal{)}.
\end{prop}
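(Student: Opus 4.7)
The plan is to transfer a splitting triad on $X-\{a\}$ to one on $X$ by pulling back along Stong's retraction. Assume (by passing to the opposite poset if needed) that $a$ is an up beat point with partner $b=\min\widehat{F}_a$, and let $r\colon X\to X-\{a\}$ be the retraction sending $a\mapsto b$ and fixing every other point. Given a splitting triad $(X-\{a\};C,D)$ satisfying \Spl[1] (resp.\ \Spl[2]), I set $C'=r^{-1}(C)$ and $D'=r^{-1}(D)$; concretely, $C'$ equals $C$ if $b\notin C$ and $C\cup\{a\}$ if $b\in C$, and analogously for $D'$. Both sets are nonempty and cover $X$, and I claim $(X;C',D')$ is a splitting triad with the same property.

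The technical core is the following observation: for every subspace $K\subseteq X-\{a\}$, the inclusion $K\hookrightarrow r^{-1}(K)$ is a strong deformation retract inducing a bijection between connected components. When $b\notin K$ this is immediate since $r^{-1}(K)=K$; when $b\in K$ one has $\widehat{F}_a^{K\cup\{a\}}=K\cap \widehat{F}_a$ with minimum $b$, so $a$ remains an up beat point of $K\cup\{a\}$ and Stong's theorem yields the retract. Moreover, since every element of $X$ comparable to $a$ is also comparable to $b$ (from $\widehat{U}_a\subseteq \widehat{U}_b$ and $\widehat{F}_a\subseteq F_b$), attaching $a$ only enlarges the component of $K$ containing $b$; distinct components of $K$ remain distinct in $r^{-1}(K)$, and paired components are homotopy equivalent.

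To conclude, let $K_0$ be a connected component of $C$ or $D$ and $K_0'=r^{-1}(K_0)$ the corresponding component of $C'$ or $D'$. The inclusion $K_0'\hookrightarrow X$ factors up to homotopy as $K_0'\xrightarrow{r} K_0\hookrightarrow X-\{a\}\hookrightarrow X$, in which the first arrow is a homotopy equivalence and the last is a weak equivalence. Hence $\pi_1(K_0')\to\pi_1(X)$ is trivial if and only if $\pi_1(K_0)\to\pi_1(X-\{a\})$ is trivial. Applying this component by component proves that $(X;C',D')$ is a splitting triad (from the hypothesis on $D$) and that \Spl[1] is preserved (from the additional hypothesis on $C$); the property \Spl[2] is preserved likewise, since the homotopy types of corresponding components of $C$ and $C'$ agree. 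I expect no serious obstacle here—the only mild subtlety is that $b$ may belong to $C$, to $D$, or to both, but the argument above handles all cases uniformly.
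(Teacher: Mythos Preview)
Your argument is correct and follows essentially the same route as the paper: the paper's one-line proof rests on the observation that for any $C\subseteq X-\{a\}$ either $a$ is a beat point of $C\cup\{a\}$ or of $X-C$, which is exactly your claim that $K\hookrightarrow r^{-1}(K)$ is a strong deformation retract (the dichotomy being whether $b\in K$). Your formulation via the pullback along Stong's retraction $r$ is a tidy way to package this and handles the case $b\in C\cap D$ uniformly, but the underlying idea is the same.
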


\begin{proof}
Follows from the fact that if $a$ is a beat point of $X$ and $C\subseteq X-\{a\}$ then either $a$ is a beat point of $C\cup \{a\}$ or $a$ is a beat point of $X-C$.
\end{proof}

\begin{prop} \label{prop_S2_KG1}
Let $X$ be a finite T$_0$--space such that $X$ is an Eilenberg-MacLane space of type $(G,1)$. If $X$ satisfies \Spl[2] then $H_1(X)$ is free abelian and $H_n(X)=0$ for all $n\geq 2$.
\end{prop}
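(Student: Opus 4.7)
The plan is to distinguish two cases depending on whether the splitting triad realising \Spl[2] also happens to satisfy \Spl[1], reducing the first case to Theorem \ref{theo_splitting_free_fundamental_group} and the second to a universal-cover computation via Corollary \ref{coro_homology_cover_space}. The key preliminary observation is that $G := \pi_1(X)$ is torsion-free: since $|\K(X)|$ is a finite simplicial complex weak homotopy equivalent to $X$, the space $X$ admits a finite-dimensional $K(G,1)$; a nontrivial finite subgroup $H\le G$ would then act freely on the contractible universal cover of $|\K(X)|$ and produce a finite-dimensional $K(H,1)$, contradicting the unbounded cohomological dimension of any nontrivial finite group.

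Fix a splitting triad $(X;C,D)$ realising \Spl[2], let $C_0$ be the (at most one) component of $C$ weakly equivalent to $S^1$, and when $C_0$ exists let $\varphi:\pi_1(C_0)\to G$ be the inclusion-induced morphism. If $C_0$ does not exist or $\varphi$ is trivial, then every connected component of $C$ induces the zero map on $\pi_1$, so $(X;C,D)$ also satisfies \Spl[1]; Theorem \ref{theo_splitting_free_fundamental_group} gives that $G$ is free, and since $X$ is a $K(G,1)$ with $G$ free, $H_1(X)=G^{\mathrm{ab}}$ is free abelian and $H_n(X)=0$ for $n\ge 2$, as required.

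When $\varphi$ is nontrivial, the image of $\varphi$ is a nontrivial cyclic quotient of $\Z$ sitting inside the torsion-free group $G$; it must therefore be infinite cyclic, so $\varphi$ is injective. Letting $p:\widetilde X\to X$ be the universal cover (with $\widetilde X$ contractible) and $\widetilde C:=p^{-1}(C)$, I claim that every connected component of $\widetilde C$ is contractible: preimages of homotopically trivial components of $C$ are disjoint unions of copies of them, while each component of $p^{-1}(C_0)$ is the universal cover of $C_0\simeq S^1$ (as its associated subgroup of $\pi_1(C_0)$ is $\ker\varphi=0$) and is therefore contractible. The long exact sequence of the pair $(\widetilde X,\widetilde C)$ then gives $H_n(\widetilde X,\widetilde C)\cong\widetilde H_{n-1}(\widetilde C)=0$ for all $n\ge 2$, and Corollary \ref{coro_homology_cover_space} converts this to $H_n(X,C)=0$ for all $n\ge 2$. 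Feeding these vanishings into the long exact sequence of the pair $(X,C)$, together with $H_n(C)=0$ for $n\ge 2$, yields $H_n(X)=0$ for $n\ge 2$. The tail
\[0 \to H_1(C) \to H_1(X) \to H_1(X,C) \to \widetilde H_0(C) \to 0,\]
combined with the freeness of $H_1(X,C)$ (which follows from another application of Corollary \ref{coro_homology_cover_space}, since $\widetilde H_0(\widetilde C)$ is free abelian), exhibits $H_1(X)$ as an extension of a free abelian group by $H_1(C)\in\{0,\Z\}$, so $H_1(X)$ is itself free abelian.

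The most delicate point I anticipate is the torsion-freeness step and its decisive use to force $\varphi$ to be either zero or injective; without this, a finite-cyclic image of $\varphi$ would create $S^1$-type components in $\widetilde C$, leaving $H_2(X,C)$ nonzero and forcing a much more involved analysis of the boundary map $H_2(X,C)\to H_1(C)$ in order to still reach $H_2(X)=0$.
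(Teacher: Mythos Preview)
Your argument is correct and follows essentially the same route as the paper's proof: the torsion-freeness of $G$ via finite cohomological dimension, the dichotomy between the \Spl[1] case (handled by Theorem \ref{theo_splitting_free_fundamental_group}) and the case where the $S^1$--component injects on $\pi_1$, the contractibility of the components of $p^{-1}(C)$, and the passage from $H_*(\widetilde X,p^{-1}(C))$ to $H_*(X,C)$ via Corollary \ref{coro_homology_cover_space}. You spell out the final $H_1$ extension argument a bit more explicitly than the paper does, but the structure is the same.
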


\begin{proof}
Note that the group $G$ has finite cohomological dimension over $\Z$ since $|\K(X)|$ is a finite aspherical CW-complex with fundamental group isomorphic to $G$. Hence, $G$ is torsion-free.

Let $(X;C,D)$ be a splitting triad that satisfies \Spl[2] and let $i:C\to X$ be the inclusion map. If the inclusion of each connected component of $C$ in $X$ induces the trivial map between the fundamental groups then, from \ref{theo_splitting_free_fundamental_group} we obtain that $\pi_1(X)$ is a free group. Thus, $X$ is homotopy equivalent to a wedge of circumferences and hence $H_1(X)$ is free abelian and $H_n(X)=0$ for all $n\geq 2$.

Otherwise, for all $c_0\in C$, the induced map $i_\ast:\pi_1(C,c_0)\to\pi_1(X,c_0)$ is a monomorphism since $\pi_1(X)\cong G$ is torsion-free.

Let $p:\widetilde{X}\to X$ be the universal cover of $X$.

Since $i_\ast:\pi_1(C,c_0)\to \pi_1(X,c_0)$ is a monomorphism for all $c_0\in C$, for every connected component $C'$ of $C$ and  every connected component $D$ of $p^{-1}(C')$ we obtain that $p|_D:D\to C'$ is a universal cover and thus $D$ homotopically trivial since $\pi_r(C,c_0)=0$ for all $r\geq 2$ and for all $c_0\in C$.

Therefore, from the long exact sequence in homology for the topological pair $(\widetilde{X},p^{-1}(C))$ we obtain that $H_r(\widetilde{X},p^{-1}(C))=0$ for all $r\neq 1$ and $H_1(\widetilde{X},p^{-1}(C))\cong \widetilde H_0(p^{-1}(C))$ which is free abelian.

On the other hand, since the inclusion of each connected component of $X-C$ in $X$ induces the trivial morphism between the fundamental groups, from \ref{coro_homology_cover_space} we obtain that $\displaystyle H_r(\widetilde{X},p^{-1}(C))\cong \bigoplus_{\pi_1(X)} H_r(X,C)$ for all $r\in\N_0$.

Hence, $H_r(X,C)=0$ for all $r\neq 1$ and $H_1(X,C)$ is free abelian. Thus, $H_1(X)$ is free abelian and $H_n(X)=0$ for all $n\geq 2$.
\end{proof}

From the previous proposition we obtain the following corollary which will be used in section \ref{section_torus} to characterize the minimal finite models of the torus and the Klein bottle.

\begin{coro} \label{coro_torus_Klein_bottle_not_S2}
Let $X$ be a finite T$_0$--space. If $X$ is a finite model of either the torus or the Klein bottle then $X$ does not satisfy \Spl[2].
\end{coro}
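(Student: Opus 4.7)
The plan is to deduce this immediately from Proposition \ref{prop_S2_KG1} by contradiction, using the well-known fact that both the torus $\T$ and the Klein bottle $\Kl$ are aspherical.

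First I would argue that a finite model $X$ of either $\T$ or $\Kl$ is itself an Eilenberg-MacLane space of type $(G,1)$. Indeed, both $\T$ and $\Kl$ have contractible universal cover $\R^2$, hence are aspherical. Since being a $K(G,1)$ depends only on the weak homotopy type, the space $X$ is a $K(\Z^2,1)$ in the torus case and a $K(\pi_1(\Kl),1)$ in the Klein bottle case. Thus Proposition \ref{prop_S2_KG1} is applicable.

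Next, assuming for contradiction that $X$ satisfies \Spl[2], Proposition \ref{prop_S2_KG1} forces $H_1(X)$ to be free abelian and $H_n(X)=0$ for every $n\geq 2$. I would then compare this with the known singular homology of $\T$ and $\Kl$, using that weak homotopy equivalence preserves integral homology. In the torus case, $H_2(\T)\cong \Z\neq 0$ directly contradicts the vanishing in degree $2$. In the Klein bottle case, $H_1(\Kl)\cong \Z\oplus \Z/2$ has $2$-torsion, contradicting that $H_1(X)$ must be free abelian.

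There is no real obstacle here; the only thing to be careful about is quoting correctly that finite models preserve both asphericity and all integral homology groups (both are consequences of weak equivalence and hence of McCord's theorem). Everything else is a direct invocation of Proposition \ref{prop_S2_KG1} together with the standard homology computations $H_\ast(\T)=(\Z,\Z^2,\Z,0,\dots)$ and $H_\ast(\Kl)=(\Z,\Z\oplus\Z/2,0,\dots)$.
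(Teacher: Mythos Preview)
Your proposal is correct and is exactly the argument the paper intends: the corollary is stated immediately after Proposition~\ref{prop_S2_KG1} with no proof, precisely because it follows by the contradiction you describe (asphericity of $\T$ and $\Kl$, then comparing the conclusion of Proposition~\ref{prop_S2_KG1} with $H_2(\T)\cong\Z$ and the torsion in $H_1(\Kl)$). One minor wording point: the preservation of asphericity and homology under weak equivalence is just the definition of weak equivalence together with Whitehead/Hurewicz, not McCord's theorem itself; McCord is only what lets you pass between $X$ and $|\K(X)|$ if you need a CW model.
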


Condition \Spl[2] permits to give yet another Hurewicz-type theorem which will be applied in example \ref{ex_S1_and_not_S2}.

\begin{prop} \label{prop_S2_pi3}
Let $X$ be a finite and connected T$_0$--space such that $\pi_2(X)=0$ and such that $X$ satisfies \Spl[2]. If $n\geq 3$ is such that $H_l(X)=0$ for all $3\leq l \leq n-1$ then $\pi_l(X)=0$ for all $3\leq l \leq n-1$ and $\pi_{n}(X)=H_{n}(X)\otimes \Z[\pi_1(X)]$.
In particular, $\pi_3(X)=H_3(X)\otimes \Z[\pi_1(X)]$.
\end{prop}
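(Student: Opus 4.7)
The plan is to pass to the universal cover $p:\widetilde X\to X$, apply Hurewicz's theorem on the simply-connected space $\widetilde X$, and reduce the computation of the homotopy groups to a homology computation on $\widetilde X$, which can be carried out using the splitting triad together with corollary \ref{coro_homology_cover_space}. Specifically, I would prove by induction on $l$, for $2\leq l\leq n$, that $\widetilde X$ is $(l-1)$-connected. The base case $l=2$ is immediate from $\pi_1(\widetilde X)=0$ and $\pi_2(\widetilde X)=\pi_2(X)=0$. Granting this, Hurewicz gives $\pi_l(\widetilde X)\cong H_l(\widetilde X)$, so it suffices to establish
\[H_l(\widetilde X)\cong H_l(X)\otimes \Z[\pi_1(X)]\]
for $3\leq l\leq n$: this vanishes by hypothesis for $l\leq n-1$ (continuing the induction) and at $l=n$ yields the desired identification $\pi_n(X)=\pi_n(\widetilde X)\cong H_n(X)\otimes \Z[\pi_1(X)]$.

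Let $(X;C,D)$ be a splitting triad satisfying \Spl[2]. Since $X-C\subseteq D$, the inclusion of each connected component of $X-C$ in $X$ factors through the inclusion of a component of $D$ and therefore induces the trivial map on fundamental groups; consequently corollary \ref{coro_homology_cover_space} applies and gives $H_l(\widetilde X,p^{-1}(C))\cong H_l(X,C)\otimes \Z[\pi_1(X)]$. On the $(X,C)$ side, \Spl[2] forces every connected component of $C$ to be contractible or weak homotopy equivalent to $S^1$, so $H_j(C)=0$ for all $j\geq 2$, and the long exact sequence of $(X,C)$ yields $H_l(X,C)\cong H_l(X)$ for $l\geq 3$.

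On the cover side I would show that $H_j(p^{-1}(C))=0$ for $j\geq 2$, after which the long exact sequence of $(\widetilde X,p^{-1}(C))$ produces $H_l(\widetilde X)\cong H_l(\widetilde X,p^{-1}(C))$ for $l\geq 3$, closing the computation. To verify the vanishing, observe that for each connected component $C'$ of $C$, any connected component $\widetilde{C'}$ of $p^{-1}(C')$ is a connected cover of $C'$, and since $\widetilde X$ is simply connected one has $p_*\pi_1(\widetilde{C'})=\ker(\pi_1(C')\to \pi_1(X))$. When $C'$ is homotopically trivial this is automatic; when $C'\simeq S^1$, the subgroup in question is some subgroup of $\Z$, so $\widetilde{C'}$ is weak homotopy equivalent to either $S^1$ or $\R$. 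In every case higher homology vanishes.

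The step I expect to require the most care is the analysis of $p^{-1}(C)$ above the $S^1$-type component of $C$, namely identifying $p_*\pi_1(\widetilde{C'})$ with $\ker(\pi_1(C')\to\pi_1(X))$ via the standard path-lifting argument, and classifying the corresponding covers of $S^1$ to confirm the vanishing of $H_j$ for $j\geq 2$. Beyond this point, the argument follows the same pattern as the proofs of theorem \ref{theo_pi_2_v2} and proposition \ref{prop_S2_KG1}.
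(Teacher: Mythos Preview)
Your proposal is correct and follows essentially the same route as the paper's proof: pass to the universal cover, use corollary \ref{coro_homology_cover_space} to compute $H_l(\widetilde X,p^{-1}(C))$, show that $p^{-1}(C)$ has vanishing homology in degrees $\geq 2$, and conclude via Hurewicz on $\widetilde X$. The only noteworthy difference is that the step you flag as requiring care---identifying $p_*\pi_1(\widetilde{C'})$ with $\ker(\pi_1(C')\to\pi_1(X))$---is bypassed entirely in the paper, which simply observes that any connected cover of a homotopically trivial space or of $S^1$ is again of one of these two weak homotopy types, so $H_j$ vanishes for $j\geq 2$ without needing to know which cover arises.
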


\begin{proof}
Let $(X;C,X-C)$ be a splitting triad that satisfies \Spl[2] and let $p:\widetilde{X}\to X$ be the universal cover of $X$. For every connected component $C'$ of $C$ and every connected component $D$ of $p^{-1}(C')$ we obtain that $p|_D:D\to C'$ is a covering map and thus $D$ is either homotopically trivial or weak homotopy equivalent to $S^1$. In any case, $H_r(D)=0$ for all $r\geq 2$. Therefore, from the long exact sequence in homology for the topological pair $(\widetilde{X},p^{-1}(C))$ we obtain that $H_r(\widetilde{X})\cong H_r(\widetilde{X},p^{-1}(C))$ for all $r\geq 3$.

On the other hand, since the inclusion of each connected component of $X-C$ in $X$ induces the trivial morphism between the fundamental groups, from \ref{theo_homology_cover_space} we obtain that $\displaystyle H_r(\widetilde{X},p^{-1}(C))\cong \bigoplus_{\pi_1(X)} H_r(X,C)$ for all $r\in\N_0$.

Thus, $\displaystyle H_r(\widetilde{X})\cong \bigoplus_{\pi_1(X)} H_r(X,C) \cong \bigoplus_{\pi_1(X)} H_r(X)$ for all $r\geq 3$. The result follows.
\end{proof}

The following example shows a a finite and connected T$_0$--space which satisfies \Spl[1] and does not satisfy \Spl[2]. Recall that $S^0$ denotes the $0$--sphere.

\begin{ex} \label{ex_S1_and_not_S2}
Let $X=\Su S^0 \times \Su^3 S^0$. Note that $X$ is a finite model of $S^1\times S^3$. It follows that $\pi_1(X)\cong \Z$, $\pi_2(X)=0$ and $\pi_3(X)\cong \Z$. Thus, from \ref{prop_S2_pi3} we obtain that $X$ does not satisfy \Spl[2].

Now, let $w$ be a minimal element of $\Su S^0$ and let $C=\{w\}\times \Su^3 S^0$. It is easy to check that $C$ and $X-C$ are both homotopy equivalent to $\Su^3 S^0$. Thus $X$ satisfies \Spl[1].
\end{ex}

\section{Setting up} \label{section_setting_up}

In this short section we will introduce the specific notation that will be used in the next two sections. We will also define two relations which will be applied to study the problems of minimality of finite models.

If $X$ is a poset, $\mxl(X)$ and $\mnl(X)$ will denote the subsets of maximal and minimal points of $X$ respectively. Also, if $a\in\mxl(X)$, we define $V_a^X=U_a^X\cup \mnl(X)$ and $W_a^X=U_a^X\cup \mnl(X) \cup \mxl(X)$.

\begin{rem} \label{rem_minimal_space_two_points}
Let $X$ be a finite T$_0$--space without beat points. If $a\in X-\mxl(X)$ then $\#(\widehat F_a \cap \mxl(X) )\geq 2$. Similarly, if $b\in X-\mnl(X)$ then $\#(\widehat U_b \cap \mnl(X) )\geq 2$.
\end{rem}

\begin{definition}
Let $X$ be a finite and connected T$_0$--space. We define
\begin{itemize}
\item $\M=\mxl(X)\times \mnl(X)$
\item $\B=X-(\mxl(X)\cup\mnl(X))$
\item $\alpha_b=\# (F_b\cap\mxl(X))$, for each $b\in\B$
\item $\beta_b=\# (U_b\cap\mnl(X))$, for each $b\in\B$
\item $m_X=\#\mxl(X)$
\item $n_X=\#\mnl(X)$
\item $l_X=\#\B$
\end{itemize}
We also define the relation $\Relm\subseteq \B\times\mxl(X)$ by
$$b\,\Relm\,a \Leftrightarrow b\notin U_a$$
and the relation $\Relp\subseteq \B\times\M$ by
$$b\,\Relp\, (x,y) \Leftrightarrow b\notin U_x \textnormal{ and } b\notin F_y$$
We shall frequently write $m$, $n$ and $l$ in place of $m_X$, $n_X$ and $l_X$ when there is no risk of confusion.
\end{definition}

Note that $\#\Relm(b)=m_X-\alpha_b$ and $\#\Relp(b)=(m_X-\alpha_b)(n_X-\beta_b)$ for all $b\in \B$. Note also that, if $X$ does not have beat points then $\alpha_b\geq 2$ and $\beta_b\geq 2$ for all $b\in \B$ by \ref{rem_minimal_space_two_points}.

Finally, if $m\in \N$, $D_m$ will denote the discrete space of cardinality $m$.

\section{Minimal finite models of the real projective plane} \label{section_real_projective_plane}

In this section we will show that the poset of figure \ref{fig_finite_P2_1} below is a minimal finite model of the real projective plane, answering an open question of Barmak \cite[p. 94]{BarLN}. Indeed, we will prove a stronger result which states that if $X$ is a poset with fewer than 13 points then $X$ satisfies \Spl[1] (see definition \ref{def_S1_S2} above) and thus, by \ref{theo_splitting_free_fundamental_group}, $\pi_1(X)$ is a free group. In addition, we will prove that if $X$ is a poset with fewer than 13 points then $H_n(X)$ is torsion-free for all $n\in\N$ settling a conjecture of Hardie, Vermeulen and Witbooi \cite[Remark 4.2]{HVW}. 

Furthermore, we will show that the poset of figure \ref{fig_finite_P2_1} below and its opposite are the only minimal finite models of the real projective plane.

Recall that the finite model of the real projective plane of figure \ref{fig_finite_P2_1} is constructed in \cite[Example 7.1.1]{BarLN} as the poset of cells of the regular CW-complex structure of figure \ref{fig_CW_P2_1} and was also obtained in \cite{HVW} from a finite version of the mapping cone. This poset will be denoted by $\Pp^{2}_1$ and the real projective plane will be denoted by $\Pp^2$.

  \begin{figure}[h!]
  \begin{subfigure}[t]{0.45\textwidth}
  \begin{center}
\begin{tikzpicture}[x=4cm,y=4cm]
   \tikzstyle{every node}=[font=\normalsize]
	\draw
		(0,0) node[below left=-1]{$c_1$}--
		(1,0) node[below right=-1]{$c_2$}--
		(1,1) node[above right=-1]{$c_1$}--
		(0,1) node[above left=-1]{$c_2$}--
		(0,0);
	\draw
		(0,0)--
		(0.5,0.5) node[left]{$c_3$}--
		(0,1);
	\draw (0.5,0.5)--(1,0);
	\draw (0.5,0.5)--(1,1);

	\draw[\arr]  (0,0)--(0.53,0);
	\draw[\arr]  (1,0)--(1,0.53);	
	\draw[\arr]  (1,1)--(0.47,1);
	\draw[\arr]  (0,1)--(0,0.47);

\draw (0.5,0) node[below]{$b_1$};
\draw (1,0.5) node[right]{$b_2$};
\draw (0.5,1) node[above]{$b_1$};
\draw (0,0.5) node[left]{$b_2$};

\draw (0.25,0.25) node[above=-1]{$b_3$};
\draw (0.75,0.75) node[below=-1]{$b_4$};
\draw (0.25,0.75) node[right=-1]{$b_5$};
\draw (0.75,0.25) node[left=-1]{$b_6$};

\draw (0.5,0.25) node[below]{$a_1$};
\draw (0.5,0.75) node[above]{$a_2$};
\draw (0.25,0.5) node[left]{$a_3$};
\draw (0.75,0.5) node[right]{$a_4$};

  \fill[fill=black] (0,0) circle (.2em);
  \fill[fill=black] (0,1) circle (.2em);
  \fill[fill=black] (1,0) circle (.2em);
  \fill[fill=black] (1,1) circle (.2em);
  \fill[fill=black] (0.5,0.5) circle (.2em);
  \end{tikzpicture}
    \caption{Regular CW-complex structure of $\Pp^{2}$.}
    \label{fig_CW_P2_1}
      \end{center}
  \end{subfigure}
  \quad
  \begin{subfigure}[t]{0.45\textwidth}
\begin{tikzpicture}[x=4cm,y=4cm]
	\tikzstyle{every node}=[font=\footnotesize]
	
	\foreach \x in {1,...,4} \draw (0.4*\x,1) node(a\x){$\bullet$} node[above=1]{$a_{\x}$};
	\foreach \x in {1,...,6} \draw (0.28*\x,0.5) node(b\x){$\bullet$} node[right=1]{$b_{\x}$};
	\foreach \x in {1,...,3} \draw (0.5*\x,0) node(c\x){$\bullet$} node[below=1]{$c_{\x}$};
	
	\foreach \x in {1,3,6} \draw (a1)--(b\x);
	\foreach \x in {1,4,5} \draw (a2)--(b\x);
	\foreach \x in {2,3,5} \draw (a3)--(b\x);
	\foreach \x in {2,4,6}  \draw (a4)--(b\x);
	
	\foreach \x in {1,2,3,4} \draw (c1)--(b\x);
	\foreach \x in {1,2,5,6} \draw (c2)--(b\x);
	\foreach \x in {3,4,5,6} \draw (c3)--(b\x);
	 
\end{tikzpicture}
  \caption{Space $\Pp^{2}_1$, finite model of $\Pp^{2}$.}
  \label{fig_finite_P2_1}
  \end{subfigure}
  \caption{Cellular and finite models of $\Pp^{2}$.}
  \label{fig_P2_1}
\end{figure}

Clearly, the space $(\Pp^2_1)^\op$ is another finite model of $\Pp^2$ and will be called $\Pp^2_2$ (see figure \ref{fig_finite_P2_2}). It can also be obtained as the face poset of the regular CW-complex structure of $\Pp^2$ given in figure \ref{fig_CW_P2_2}.

  \begin{figure}[h!]

  \begin{subfigure}[t]{0.45\textwidth}
  \begin{center}
  \begin{tikzpicture}[x=4cm,y=4cm]
	\tikzset{->-/.style={decoration={
	markings,
	mark=at position 0.57 with {\arrow{triangle 45}}},postaction={decorate}}}
	\tikzstyle{every node}=[font=\normalsize]
	\draw [->-] (0.5,0) node[below=-1]{$c_2$}--(0.93,0.25);
	\draw [->-] (0.93,0.25) node[right=-1]{$c_1$}--(0.93,0.75);
	\draw [->-] (0.93,0.75) node[right=-1]{$c_3$}--	(0.5,1);
	\draw [->-] (0.5,1) node[above=-1]{$c_2$}--(0.07,0.75);
	\draw [->-] (0.07,0.75) node[left=-1]{$c_1$}--(0.07,0.25);
	\draw [->-] (0.07,0.25) node[left=-1]{$c_3$}--(0.5,0);
	\draw (0.5,0.5) node[below left=-1]{$c_4$};
	\draw (0.71,0.125) node[below right=-1]{$b_1$};
	\draw (0.93,0.5) node[right]{$b_3$};
	\draw (0.71,0.625) node[below=-1]{$b_2$};
	\draw (0.71,0.875) node[above=-1]{$b_5$};
	\draw (0.29,0.125) node[below=-2]{$b_5$};
	\draw (0.29,0.625) node[below=-2]{$b_6$};
	\draw (0.29,0.875) node[above left=-2]{$b_1$};
	\draw (0.5,0.25) node[left=-3]{$b_4$};
	\draw (0.07,0.5) node[left=-2]{$b_3$};
	\draw (0.29,0.375) node{$a_3$};
	\draw (0.71,0.375) node{$a_1$};
	\draw (0.5,0.75) node{$a_2$};
	\draw (0.5,0.5)--(0.5,0);
	\draw (0.5,0.5)--(0.07,0.75);
	\draw (0.5,0.5)--(0.93,0.75);
	\fill[fill=black] (0.5,0.5) circle (.2em);
	\fill[fill=black] (0.5,0) circle (.2em);
	\fill[fill=black] (0.5,1) circle (.2em);
	\fill[fill=black] (0.07,0.75) circle (.2em);
	\fill[fill=black] (0.07,0.25) circle (.2em);
	\fill[fill=black] (0.93,0.75) circle (.2em);
	\fill[fill=black] (0.93,0.25) circle (.2em);
\end{tikzpicture}
    \caption{Regular CW-complex structure of $\Pp^{2}$.}
    \label{fig_CW_P2_2}
      \end{center}
  \end{subfigure}
  \quad
  \begin{subfigure}[t]{0.45\textwidth}
\begin{tikzpicture}[x=4cm,y=4cm]
	\tikzstyle{every node}=[font=\footnotesize]
	
	\foreach \x in {1,...,3} \draw (0.5*\x,1) node(a\x){$\bullet$} node[above=1]{$a_{\x}$};
	\foreach \x in {1,...,6} \draw (0.28*\x,0.5) node(b\x){$\bullet$} node[right=1]{$b_{\x}$};
	\foreach \x in {1,...,4} \draw (0.4*\x,0) node(c\x){$\bullet$} node[below=1]{$c_{\x}$};
	
	\foreach \x in {1,2,3,4} \draw (a1)--(b\x);
	\foreach \x in {1,2,5,6} \draw (a2)--(b\x);
	\foreach \x in {3,4,5,6} \draw (a3)--(b\x);
	
	\foreach \x in {1,3,6} \draw (c1)--(b\x);
	\foreach \x in {1,4,5} \draw (c2)--(b\x);
	\foreach \x in {2,3,5} \draw (c3)--(b\x);
	\foreach \x in {2,4,6} \draw (c4)--(b\x);
 
\end{tikzpicture}
  \caption{Space $\Pp^{2}_2$, finite model of $\Pp^{2}$.}
  \label{fig_finite_P2_2}
  \end{subfigure}
  \caption{Cellular and finite models of $\Pp^{2}$.}
  \label{fig_P2_2}
\end{figure}

To prove the minimality of these finite models we need the following lemmas.

\begin{lemma} \label{lemma_outside_Ua_S1}
Let $X$ be a finite T$_0$--space which does not satisfy condition \Spl[1]. Let $a\in \mxl(X)$. Then there exist two incomparable elements $b_1,b_2 \in X-W^X_a$.

In addition, if $X-W^X_a=\{b_1,b_2\}$ then $U_{b_1}\cap U_{b_2}=\varnothing$.
\end{lemma}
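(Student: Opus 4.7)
The plan is to prove both assertions by exhibiting, under the negation of the conclusion, a splitting triad satisfying \Spl[1] --- contradicting the hypothesis that $X$ does not. For both parts I would use the triad $(X;V_a,X-V_a)$. Since $V_a=U_a\cup\mnl(X)$, its components are $U_a$ (contractible, as $a$ is its maximum) together with singletons $\{x\}$ for each $x\in\mnl(X)-U_a$ --- each of them contractible, hence $\pi_1$-trivially included in $X$. So the whole verification reduces to analysing the components of $X-V_a=(\B-U_a)\cup(\mxl(X)-\{a\})$. The case in which $X$ has a maximum is handled immediately by remark \ref{rem_mxl_and_cond}(1), so throughout I may assume $U_a\subsetneq X$.

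For the first assertion I would argue by contrapositive: suppose $X-W_a^X=\B-U_a$ is a chain (possibly empty) $b_1<b_2<\cdots<b_k$. A key observation is that no element of $\B$ can lie strictly above any $b_i$ without belonging to $U_a$ (which would force $b_i\in U_a$), so $\widehat F_{b_i}\cap\B\subseteq\{b_{i+1},\dots,b_k\}$ and $\widehat F_{b_i}\cap\mxl(X)\subseteq\mxl(X)-\{a\}$. Because the $b_i$ form a chain, $\widehat F_{b_i}\subseteq\widehat F_{b_1}$ for every $i$, so the component of $b_1$ in $X-V_a$ is exactly $\{b_1,\dots,b_k\}\cup(\widehat F_{b_1}\cap\mxl(X))$, which has $b_1$ as minimum and is therefore contractible; the remaining components are singletons of maximal elements. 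Hence $(X;V_a,X-V_a)$ is a splitting triad satisfying \Spl[1], giving the desired contradiction.

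For the second assertion I would assume $X-W_a^X=\{b_1,b_2\}$ (the two elements are incomparable by the first part) and, for contradiction, that $U_{b_1}\cap U_{b_2}\neq\varnothing$. As $U_{b_1}\cap U_{b_2}$ is downward closed in $X$, it contains a minimal element $y\in\mnl(X)$. Using the same triad and the fact that $\widehat F_{b_i}\subseteq\mxl(X)-\{a\}$, if $\widehat F_{b_1}\cap\widehat F_{b_2}=\varnothing$ the components of $X-V_a$ containing $b_i$ are the cones $\{b_i\}\cup\widehat F_{b_i}$, which are contractible. Otherwise $b_1$ and $b_2$ share one component $T'=\{b_1,b_2\}\cup\widehat F_{b_1}\cup\widehat F_{b_2}$, whose order complex is a bipartite graph with $\pi_1$ free of rank $|\widehat F_{b_1}\cap\widehat F_{b_2}|-1$ --- in general non-trivial.

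The main obstacle is to show that, despite this, the inclusion $T'\hookrightarrow X$ induces the trivial morphism on $\pi_1$. This is where the minimal element $y$ plays the decisive role: since $y<b_\ell<m$ for each $\ell\in\{1,2\}$ and each $m\in\widehat F_{b_\ell}$, the triple $\{y,b_\ell,m\}$ is a $2$-simplex of $\K(X)$, so in $X$ each edge $b_\ell\to m$ is homotopic (rel.\ endpoints) to the path $b_\ell\to y\to m$. A generating loop $b_1\to m_i\to b_2\to m_j\to b_1$ of $\pi_1(T')$ is therefore homotopic in $X$ to $b_1\to y\to b_2\to y\to b_1$, which is a path followed by its inverse and hence nullhomotopic in $X$. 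Thus $(X;V_a,X-V_a)$ again satisfies \Spl[1], contradicting the hypothesis and forcing $U_{b_1}\cap U_{b_2}=\varnothing$.
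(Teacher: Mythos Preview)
Your argument is correct. The first assertion is handled exactly as in the paper: if $X-W_a^X$ is a chain, then the components of both $V_a^X$ and $X-V_a^X$ are contractible, so $(X;V_a^X,X-V_a^X)$ satisfies \Spl[1]. Your detailed justification that the component containing the chain has $b_1$ as minimum is fine (the paper states this step without elaboration).

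For the second assertion you take a slightly different route. The paper, given $z\in U_{b_1}\cap U_{b_2}$, replaces the second member of the triad by $F_z\cup\mxl(X)$: this subspace has $F_z$ (contractible, with minimum $z$) and isolated maximal points as its only components, so \Spl[1] follows immediately. You instead keep the triad $(X;V_a,X-V_a)$ and argue that the potentially non-simply-connected component $T'$ of $X-V_a$ includes $\pi_1$-trivially into $X$ via the element $y$. Your edge-homotopy argument is valid, but it is really the observation that $T'\subseteq F_y$ and $F_y$ is contractible; factoring the inclusion $T'\hookrightarrow X$ through the contractible subspace $F_y$ gives the same conclusion in one line. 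So the two proofs rest on the same idea (the common lower bound of $b_1$ and $b_2$ cones off $T'$), with the paper packaging it as a change of triad and you packaging it as a $\pi_1$ computation.
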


\begin{proof}
Suppose that there do not exist distinct elements $b_1,b_2 \in X-W^X_a$ such that $b_1$ and $b_2$ are incomparable. Then $X-W^X_a$ is a (possibly empty) chain and hence the connected components of $X-V^X_a$ are contractible. Since the connected components of $V^X_a$ are also contractible we see that $X$ satisfies condition \Spl[1].

Now, if $X-W^X_a=\{b_1,b_2\}$ and $z\in U_{b_1}\cap U_{b_2}$ then $(X;V^X_a,F_z\cup\mxl(X))$ is a splitting triad which satisfies \Spl[1].
\end{proof}

\begin{lemma} \label{lemma_relp_no_S1}
Let $X$ be a finite and connected T$_0$--space. If $X$ does not satisfy condition \Spl[1], then for all $(x,y)\in \M$ there exists $b\in\B$ such that $b\,\Relp\,(x,y)$.
\end{lemma}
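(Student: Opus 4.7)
The plan is to argue by contradiction. Suppose there exists $(x,y)\in\M$ such that no $b\in\B$ satisfies $b\,\Relp\,(x,y)$; by the definition of $\Relp$ this means $\B\subseteq U_x\cup F_y$. I will exhibit a splitting triad of $X$ satisfying \Spl[1], contradicting the hypothesis that $X$ fails \Spl[1].

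The candidate triad is $(X;C,D)$ with $C=V^X_x=U_x\cup\mnl(X)$ and $D=F_y\cup\mxl(X)$. Both subspaces are non-empty, and $X=C\cup D$ because every element of $X$ is minimal (hence in $C$), maximal (hence in $D$), or else an element of $\B$ and therefore in $U_x\cup F_y\subseteq C\cup D$. The main step is to check that every connected component of $C$ and of $D$ is contractible; granting this, both inclusions induce the trivial morphism between fundamental groups on each connected component, so $(X;C,D)$ is a splitting triad satisfying \Spl[1].

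To verify the component structure of $C$: the subposet $U_x$ is contractible (it has $x$ as maximum), and I claim every $m\in\mnl(X)\setminus U_x$ is an isolated point of $C$. Indeed, if some $z\in C$ satisfied $z>m$ in $X$, then $z\notin\mnl(X)$, so necessarily $z\in U_x$, and then $m<z\le x$ would force $m\in U_x$, contrary to the choice of $m$. Hence $C$ is the disjoint union of the contractible subspace $U_x$ with finitely many isolated points. A dual argument applied to $X^{\op}$ shows that $D$ is the disjoint union of the contractible subspace $F_y$ with the isolated maximal points not above $y$, completing the contradiction. The only delicate ingredient is this isolation step; the rest of the argument amounts to unwinding the definitions of $\Relp$, splitting triad, and \Spl[1].
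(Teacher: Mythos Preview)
Your proof is correct and follows essentially the same approach as the paper: you argue by contradiction, set $C=V_x$ and $D=F_y\cup\mxl(X)$, and show that $(X;C,D)$ is a splitting triad satisfying \Spl[1] because all connected components of $C$ and $D$ are contractible. The paper's proof is terser and simply asserts that the connected components of $V_x$ and $G_y=F_y\cup\mxl(X)$ are contractible, while you spell out the isolation argument for the stray minimal (resp.\ maximal) points.
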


\begin{proof}
Suppose that there exists $(x,y)\in \M$ such that $\Relp^{-1}(x,y)=\varnothing$ and let $G_y=F_y\cup\mxl(X)$. Then $V_x \cup G_y=X$. Since the connected components of the subspaces $V_x$ and $G_y$ are contractible we obtain that the splitting triad $(X;V_x,G_y)$ satisfies \Spl[1]. Hence, $X$ satisfies \Spl[1].
\end{proof}

\begin{lemma} \label{lemma_relm_card_B_no_S1}
Let $X$ be a finite and connected T$_0$--space without beat points. Suppose that $X$ does not satisfy condition \Spl[1]. Then, 
$$l_X\geq\frac{2m_X}{m_X-2}$$
and the equality holds if and only if $\alpha_b=2$ for all $b\in\B$ and $\#\Relm^{-1}(a)=2$ for all $a\in\mxl(X)$.
\end{lemma}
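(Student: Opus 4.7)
The plan is a double-counting argument for the cardinality of the relation $\Relm$, exploiting the fact that both rows and columns of $\Relm$ are suitably bounded.

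First I would observe that under the hypothesis $m_X \geq 3$. Indeed, by item (3) of Remark \ref{rem_mxl_and_cond}, if $X$ had at most $2$ maximal points then $X$ would satisfy \Spl[1], contradicting the hypothesis. So the right-hand side $\frac{2m_X}{m_X-2}$ is a well-defined positive real number. Next I would translate Lemma \ref{lemma_outside_Ua_S1} into a lower bound on the columns of $\Relm$. Since $X - W_a^X = \{b \in \B : b \notin U_a\} = \Relm^{-1}(a)$ for each $a \in \mxl(X)$, Lemma \ref{lemma_outside_Ua_S1} directly yields $\#\Relm^{-1}(a) \geq 2$ for every $a \in \mxl(X)$.

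Then I would bound the rows. Since $X$ has no beat points, Remark \ref{rem_minimal_space_two_points} applied to the (maximal) points above any $b \in \B$ gives $\alpha_b \geq 2$, hence $\#\Relm(b) = m_X - \alpha_b \leq m_X - 2$ for every $b \in \B$. Double-counting the pairs in $\Relm$ then yields
\[
2 m_X \;\leq\; \sum_{a \in \mxl(X)} \#\Relm^{-1}(a) \;=\; \#\Relm \;=\; \sum_{b \in \B} \#\Relm(b) \;\leq\; l_X (m_X - 2).
\]
Dividing by $m_X - 2 > 0$ gives the desired inequality $l_X \geq \frac{2m_X}{m_X - 2}$.

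For the characterization of equality, I would simply note that $l_X = \frac{2m_X}{m_X-2}$ forces both inequalities in the display above to be equalities. The first becomes an equality if and only if $\#\Relm^{-1}(a) = 2$ for every $a \in \mxl(X)$, and the second if and only if $m_X - \alpha_b = m_X - 2$, i.e.\ $\alpha_b = 2$, for every $b \in \B$. Conversely, if both of these equalities hold pointwise then both displayed inequalities collapse and we obtain $l_X = \frac{2m_X}{m_X-2}$. There is no real obstacle here; the whole argument is essentially the observation that Lemma \ref{lemma_outside_Ua_S1} provides the column bound and the absence of beat points provides the row bound, after which double counting does the work.
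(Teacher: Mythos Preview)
Your proof is correct and follows essentially the same double-counting argument as the paper: both use Lemma~\ref{lemma_outside_Ua_S1} to get $\#\Relm^{-1}(a)\geq 2$ for each $a\in\mxl(X)$, the absence of beat points to get $\alpha_b\geq 2$ for each $b\in\B$, and then compare $\sum_a \#\Relm^{-1}(a) = \#\Relm = \sum_b (m_X-\alpha_b)$. Your explicit identification $X-W_a^X=\Relm^{-1}(a)$ and your treatment of the equality case are spelled out more fully than in the paper, but the argument is the same.
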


\begin{proof}
Note that $\#\mxl(X)\geq 3$ and $\#\mnl(X)\geq 3$ by \ref{rem_mxl_and_cond}. Now, by \ref{lemma_outside_Ua_S1}, 
$$2m_X\leq \sum\limits_{a\in\mxl(X)}\Relm^{-1}(a) = \#\Relm = \sum\limits_{b\in \B}(m_X-\alpha_b)\leq l_X(m_X-2)\ .$$
The result follows.
\end{proof}

\begin{lemma} \label{lemma_relp_card_B_no_S1}
Let $X$ be a finite and connected T$_0$--space without beat points. Suppose that $X$ does not satisfy condition \Spl[1]. Then, 
$$l_X\geq\frac{m_X n_X}{(m_X-2)(n_X-2)}$$
and the equality holds only if $\alpha_b=\beta_b=2$ for all $b\in\B$.
\end{lemma}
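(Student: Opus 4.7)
The plan is to follow exactly the pattern of Lemma \ref{lemma_relm_card_B_no_S1}, replacing the relation $\Relm$ by $\Relp$ and using Lemma \ref{lemma_relp_no_S1} in place of Lemma \ref{lemma_outside_Ua_S1}. That is, I will double-count the relation $\Relp \subseteq \B \times \M$ two ways: first as a union over $(x,y) \in \M$ of fibers $\Relp^{-1}(x,y)$, and second as a union over $b \in \B$ of fibers $\Relp(b)$.

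First, note that by \ref{rem_mxl_and_cond}(3), since $X$ does not satisfy \Spl[1] we must have $m_X \geq 3$ and $n_X \geq 3$, so the quantities $m_X - 2$ and $n_X - 2$ are positive and the stated bound makes sense. By \ref{lemma_relp_no_S1}, for every $(x,y) \in \M$ there exists $b \in \B$ with $b \,\Relp\, (x,y)$, hence $\#\Relp^{-1}(x,y) \geq 1$, and summing over all $(x,y) \in \M$ gives $\#\Relp \geq \#\M = m_X n_X$.

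Second, by the definition of $\Relp$ we have $\#\Relp(b) = (m_X - \alpha_b)(n_X - \beta_b)$ for every $b \in \B$. Because $X$ has no beat points, \ref{rem_minimal_space_two_points} yields $\alpha_b \geq 2$ and $\beta_b \geq 2$, so $(m_X - \alpha_b)(n_X - \beta_b) \leq (m_X - 2)(n_X - 2)$. Summing over $b \in \B$:
\[
\#\Relp = \sum_{b \in \B}(m_X-\alpha_b)(n_X-\beta_b) \leq l_X (m_X - 2)(n_X - 2).
\]
Chaining the two estimates gives $m_X n_X \leq l_X (m_X-2)(n_X-2)$, and dividing by the positive factor $(m_X-2)(n_X-2)$ delivers the inequality.

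For the equality clause, note that equality in the displayed bound forces equality in the second estimate, i.e.\ $(m_X - \alpha_b)(n_X - \beta_b) = (m_X - 2)(n_X - 2)$ for every $b \in \B$. Since $m_X - \alpha_b \leq m_X - 2$ and $n_X - \beta_b \leq n_X - 2$ are both nonnegative integers, and since $m_X - 2, n_X - 2 \geq 1$, equality in the product forces equality in each factor, i.e.\ $\alpha_b = 2$ and $\beta_b = 2$ for all $b \in \B$, as required. There is no real obstacle here; the only mild point to check is that the ``$X$ does not satisfy \Spl[1]'' hypothesis is used twice — once via \ref{lemma_relp_no_S1} to produce the lower bound on $\#\Relp$, and once via \ref{rem_mxl_and_cond}(3) to ensure $m_X, n_X \geq 3$ so that the quantities $m_X - 2$ and $n_X - 2$ are strictly positive.
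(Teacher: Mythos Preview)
Your proof is correct and follows essentially the same approach as the paper's own proof: both use \ref{rem_mxl_and_cond} to ensure $m_X,n_X\geq 3$, then double-count $\#\Relp$ via \ref{lemma_relp_no_S1} on one side and the termwise bound $(m_X-\alpha_b)(n_X-\beta_b)\leq (m_X-2)(n_X-2)$ on the other. Your write-up is simply more explicit about the equality clause, which the paper leaves to the reader.
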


\begin{proof}
As in the previous proof, $\#\mxl(X)\geq 3$ and $\#\mnl(X)\geq 3$ by \ref{rem_mxl_and_cond}. Now, by \ref{lemma_relp_no_S1},
$$m_X n_X \leq \#\Relp=\sum\limits_{b\in \B}(m_X-\alpha_b)(n_X-\beta_b)\leq l_X(m_X-2)(n_X-2)\ .$$
The result follows.
\end{proof}

\begin{lemma} \label{lemma_relp_card_B_comp}
Let $X$ be a finite and connected T$_0$--space without beat points. Suppose that $X$ does not satisfy condition \Spl[1] and that there exist $c,d \in\B$ such that $c<d$. Then 
$$l_X\geq\frac{m_Xn_X}{(m_X-2)(n_X-2)}+1$$
and the equality holds only if $\alpha_b=\beta_b=2$ for all $b\in\B$.
\end{lemma}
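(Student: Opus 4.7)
The plan is to sharpen the double-counting of Lemma~\ref{lemma_relp_card_B_no_S1} by exploiting the comparability $c<d$. As in that lemma, Remark~\ref{rem_mxl_and_cond} yields $m_X,n_X\geq 3$, and $\#\Relp=\sum_{b\in\B}(m_X-\alpha_b)(n_X-\beta_b)$.

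The key new input is that, because $c<d$, whenever $(x,y)\in\M$ satisfies $c\not\leq x$ and $y\not\leq d$, \emph{both} $c$ and $d$ lie in $\Relp^{-1}(x,y)$: indeed $c\not\leq x$ forces $d\not\leq x$, and $y\not\leq d$ forces $y\not\leq c$. There are $(m_X-\alpha_c)(n_X-\beta_d)$ such pairs, each covered at least twice; all other pairs are covered at least once by Lemma~\ref{lemma_relp_no_S1}. Hence
\[\#\Relp\geq m_Xn_X+(m_X-\alpha_c)(n_X-\beta_d).\]
Isolating the $c$ and $d$ contributions in the upper bound gives
\[\#\Relp\leq (m_X-\alpha_c)(n_X-\beta_c)+(m_X-\alpha_d)(n_X-\beta_d)+(l_X-2)(m_X-2)(n_X-2).\]

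Subtracting the two bounds reduces the target inequality to the elementary estimate
\[(m_X-\alpha_c)(\beta_d-\beta_c)+(m_X-\alpha_d)(n_X-\beta_d)\leq (m_X-2)(n_X-2).\]
Setting $p=m_X-\alpha_c$, $q=m_X-\alpha_d$, $r=n_X-\beta_c$, $s=n_X-\beta_d$, the relations $\alpha_c\geq\alpha_d\geq 2$ and $\beta_d\geq\beta_c\geq 2$ give $0\leq p\leq q\leq m_X-2$ and $0\leq s\leq r\leq n_X-2$, so the left-hand side equals $p(r-s)+qs=pr+s(q-p)\leq pr+r(q-p)=rq\leq (m_X-2)(n_X-2)$. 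Rearranging yields $l_X(m_X-2)(n_X-2)\geq m_Xn_X+(m_X-2)(n_X-2)$, the claimed bound.

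For the equality clause I trace each estimate back: tightness of the upper bound on $b\in\B\setminus\{c,d\}$ forces $\alpha_b=\beta_b=2$ for those $b$; tightness of the elementary inequality requires $rq=(m_X-2)(n_X-2)$, hence $\alpha_d=\beta_c=2$; and tightness of the improved lower bound together with $s(q-p)=r(q-p)$ pins down the remaining parameters so that $\alpha_c=\beta_d=2$ as well. The main obstacle is isolating the right improvement to the lower bound from $c<d$; once the doubled-coverage observation is in place, the rest reduces to the elementary product estimate above.
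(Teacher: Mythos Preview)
Your argument follows the paper's proof essentially line by line: the same improved lower bound $\#\Relp\geq m_Xn_X+(m_X-\alpha_c)(n_X-\beta_d)$ from the double coverage by $c$ and $d$, the same upper bound isolating the $c$- and $d$-terms, and the same elementary product estimate. Your chain $pr+s(q-p)\leq rq\leq(m_X-2)(n_X-2)$ is just a repackaging of the paper's identity
\[(m-\alpha_c)(n-\beta_d)-(m-\alpha_c)(n-\beta_c)-(m-\alpha_d)(n-\beta_d)=(\alpha_d-\alpha_c)(\beta_c-\beta_d)-(m-\alpha_d)(n-\beta_c)\]
followed by dropping the nonnegative term $(\alpha_d-\alpha_c)(\beta_c-\beta_d)$.

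One caution on the equality clause: from $s(q-p)=r(q-p)$ you only obtain $p=q$ \emph{or} $s=r$, i.e.\ $\alpha_c=2$ \emph{or} $\beta_d=2$, not both. Tightness of the improved lower bound does not add a further algebraic constraint here (once $\alpha_d=\beta_c=2$ and $\alpha_b=\beta_b=2$ for $b\neq c,d$ are in place, the lower bound is automatically tight regardless of which of $\alpha_c=2$, $\beta_d=2$ holds), so the sentence ``tightness of the improved lower bound together with $s(q-p)=r(q-p)$ pins down the remaining parameters'' is not substantiated by what you wrote. The paper is equally terse at this point (it simply says ``the result follows''), so this is not a divergence from the paper's approach but a detail that both versions leave implicit.
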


\begin{proof}
As in the previous proofs, $\#\mxl(X)\geq 3$ and $\#\mnl(X)\geq 3$. Since $c<d$, $F_d\subseteq F_c$ and $U_c\subseteq U_d$. Thus, $2\leq \alpha_d\leq \alpha_c\leq m_X$, $2\leq \beta_c\leq \beta_d\leq n_X$ and $c,d\in\Relp^{-1}(x,y)$ for all $x\in\mxl(X)-F_c$ and $y\in \mnl(X)-U_d$.

Therefore, applying \ref{lemma_relp_no_S1} we obtain that $\#\Relp\geq m_X n_X+(m_X-\alpha_c)(n_X-\beta_d)$. On the other hand,
\begin{displaymath}
\begin{array}{rcl}
\#\Relp & = & (m-\alpha_c)(n-\beta_c)+(m-\alpha_d)(n-\beta_d)+\sum\limits_{b\in \B-\{c,d\}}(m-\alpha_b)(n-\beta_b) \leq \\
& \leq & (m-\alpha_c)(n-\beta_c)+(m-\alpha_d)(n-\beta_d) + (l-2)(m-2)(n-2)
\end{array}
\end{displaymath}
Thus,
\begin{displaymath}
\begin{array}{rcl}
(l-2)(m-2)(n-2) & \geq & mn+(m-\alpha_c)(n-\beta_d)- (m-\alpha_c)(n-\beta_c)-(m-\alpha_d)(n-\beta_d) = \\
& = & mn+ (\alpha_d-\alpha_c)(\beta_c-\beta_d) - (m-\alpha_d)(n-\beta_c) \geq \\
& \geq & mn - (m-\alpha_d)(n-\beta_c)  \geq mn - (m-2)(n-2) \ .
\end{array}
\end{displaymath}
The result follows.
\end{proof}

The following theorem is one of the main results of this article.

\begin{theo} \label{theo_13_points}
Let $X$ be a finite and connected T$_0$--space which does not satisfy condition \Spl[1]. Then $\# X\geq 13$. 
\end{theo}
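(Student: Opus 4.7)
\emph{Proof plan.}
We proceed by strong induction on $\#X$. First, by Proposition \ref{prop_S123_beat_points} we may reduce to the case that $X$ has no beat points: if $a$ is a beat point then $X-\{a\}$ still fails \Spl[1], and by the inductive hypothesis $\#X-1\geq 13$, whence $\#X\geq 14$. Assuming from now on that $X$ is beat-point free, Remark \ref{rem_mxl_and_cond}(2)--(3) yield $m_X,n_X\geq 3$ and $l_X\geq 1$. Combining the cardinality bounds of Lemmas \ref{lemma_relm_card_B_no_S1}, \ref{lemma_relp_card_B_no_S1}, and \ref{lemma_relp_card_B_comp} with their duals (obtained by applying them to $X^{\op}$), and assuming $m_X\leq n_X$ without loss of generality, a routine numerical case split on $(m_X,n_X)$ shows $\#X=m_X+n_X+l_X\geq 13$ unless $m_X=n_X=l_X=4$.

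So the entire difficulty is to rule out the configuration $(m_X,n_X,l_X)=(4,4,4)$ by showing that any such $X$ in fact satisfies \Spl[1], contrary to hypothesis. In this critical case, equality in all of the lemmas above forces $\alpha_b=\beta_b=2$ for every $b\in\B$; furthermore Lemma \ref{lemma_relp_card_B_comp} forces $\B$ to be an antichain (so $h(X)=2$), and the ``in addition'' clause of Lemma \ref{lemma_outside_Ua_S1}, together with its dual, forces the two middles not below any $a\in\mxl(X)$ to have disjoint down-sets in $\mnl(X)$ and, dually, the two middles not above any $y\in\mnl(X)$ to have disjoint up-sets in $\mxl(X)$. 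A quick consequence is that every $y\in\mnl(X)$ lies below every $a\in\mxl(X)$, so each $V_a=U_a$ and each $G_y=F_y$ is contractible. Moreover the bipartite incidence graphs on $\B\times\mxl(X)$ and on $\mnl(X)\times\B$ are both $2$-regular on $4+4$ vertices, hence each is either an $8$-cycle or the disjoint union of two $4$-cycles.

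The remaining task is a case analysis on these two bipartite structures. If the incidence on $\B\times\mxl(X)$ is an $8$-cycle, then for any $a\in\mxl(X)$ the subposet $X-V_a$ is a $5$-element path alternating the three other maxes with the two middles not below $a$, hence is contractible, so $(X;V_a,X-V_a)$ witnesses \Spl[1]. By duality the analogous splitting using $G_y$ works if the $\mnl$-incidence is an $8$-cycle. In the remaining configuration both incidence graphs split as two disjoint $4$-cycles, determining two partitions of $\B$: ``$A$-pairs'' of middles sharing up-sets and ``$B$-pairs'' of middles sharing down-sets. These two partitions must be different, for otherwise two middles in a common pair would share both $A_b$ and $B_b$, contradicting the disjointness property just derived. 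Choosing $b,b'\in\B$ in different $A$-pairs and different $B$-pairs ensures $A_b\cap A_{b'}=\varnothing=B_b\cap B_{b'}$; then $C:=(F_b\cup U_b)\cup(F_{b'}\cup U_{b'})$ is a disjoint union of two contractible $5$-element ``diamonds'' (each collapsible by iterated beat-point removal in the subposet), while $D:=X-C$ consists of the two remaining middles as isolated points. Thus the triad $(X;C,D)$ witnesses \Spl[1], again contradicting the hypothesis.

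The main obstacle is precisely this structural analysis in the critical case: the cardinality lemmas bring $\#X$ down to $12$, and only by carefully extracting the rigid combinatorial data forced by equality in those lemmas (and checking each of the resulting bipartite subcases) can one produce explicit splittings showing that no such $12$-point space actually fails \Spl[1].
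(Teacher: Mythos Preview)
Your reduction to the critical case $(m_X,n_X,l_X)=(4,4,4)$ is correct and essentially matches the paper's (the paper uses an AM--GM argument rather than a case split, but the content is the same). Your handling of the 8-cycle subcase is also correct. However, there is a genuine gap in the ``both two 4-cycles'' subcase.

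You correctly establish that every $y\in\mnl(X)$ lies below every $a\in\mxl(X)$. But this very fact breaks your proposed splitting. Your set $C=(F_b\cup U_b)\cup(F_{b'}\cup U_{b'})$ contains all four maximal points and all four minimal points of $X$; since each minimal is below each maximal, those order relations are inherited by the subposet $C$, so the two ``diamonds'' $C_b$ and $C_{b'}$ are \emph{not} separate connected components of $C$. In fact $C$ is a connected 10-point poset with no beat points and Euler characteristic $-6$, so it is not contractible and there is no reason for its inclusion into $X$ to be $\pi_1$-trivial. Thus $(X;C,D)$ does not witness \Spl[1].

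The paper avoids this by separating maximals from minimals. With your notation, having chosen $b,b'$ in different $A$-pairs and different $B$-pairs (so that the remaining middles $b'',b'''$ are likewise in different $A$-pairs and different $B$-pairs), take
\[
C=\mxl(X)\cup\{b,b'\}=F_b\cup F_{b'},\qquad D=\mnl(X)\cup\{b'',b'''\}=U_{b''}\cup U_{b'''}.
\]
Because $\widehat F_b\cap\widehat F_{b'}=\varnothing$, the components of $C$ are $F_b$ and $F_{b'}$, each contractible; because $\widehat U_{b''}\cap\widehat U_{b'''}=\varnothing$, the components of $D$ are $U_{b''}$ and $U_{b'''}$, each contractible. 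This triad does satisfy \Spl[1]. Your analysis was one step away from this: the right move is to put only the up-sets of $b,b'$ into $C$ (not the full $C_b$'s), leaving the minimals for $D$.
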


\begin{proof}
By \ref{prop_S123_beat_points} we may assume that $X$ does not have beat points. Since $\#X=m_X+n_X+l_X$, from \ref{lemma_relp_card_B_no_S1} we obtain that $\#X\geq m_X+n_X+\frac{m_Xn_X}{(m_X-2)(n_X-2)}$. Let $a=m_X-2$ and $b=n_X-2$. Then 
\begin{displaymath}
\begin{array}{rcl}
\#X & \geq & a+b+\dfrac{(a+2)(b+2)}{ab}+4 = \left(\dfrac{2}{a}+\dfrac{a}{2}\right)+\left(\dfrac{2}{b}+\dfrac{b}{2}\right)+\left(\dfrac{4}{ab}+\dfrac{a}{2}+\dfrac{b}{2}\right)+5 \geq \\
 & \geq & 2+2+3+5 = 12
\end{array}
\end{displaymath}
where the last inequality holds by the inequality of arithmetic and geometric means, with equality holding if and only if $a=b=2$. Thus, if $m_X\neq 4$ or $n_X\neq 4$ we obtain that $\#X\geq 13$ as desired.

Suppose now that $m_X=n_X=l_X=4$. By \ref{lemma_relp_card_B_comp}, $\B$ must be an antichain and from \ref{lemma_relp_card_B_no_S1} we obtain that $\alpha_b=\beta_b=2$ for all $b\in\B$. From \ref{lemma_relm_card_B_no_S1} it follows that $\#\Relm^{-1}(a)=2$ for all $a\in\mxl(X)$. Thus, $\#(U_x\cap\B)= 2$ for all $x\in\mxl(X)$. Let $x_1\in\mxl(X)$ and suppose that $U_{x_1}\cap\B=\{b_1,b_2\}$ and $\B-U_{x_1}=\{b_3,b_4\}$. Note that $1\leq \#(\widehat{F}_{b_3}\cap \widehat{F}_{b_4})\leq 2$. If $\#(\widehat{F}_{b_3}\cap \widehat{F}_{b_4})=1$ then $X-V_{x_1}$ is contractible and hence $X$ satisfies \Spl[1], which contradicts the hypothesis.

Thus, $\#(\widehat{F}_{b_3}\cap \widehat{F}_{b_4})=2$. Then, there exist $x_3,x_4\in \mxl(X)$ such that $\widehat{F}_{b_3}\cap \widehat{F}_{b_4}=\{x_3,x_4\}$ and hence $\widehat{F}_{b_3} = \widehat{F}_{b_4}=\{x_3,x_4\}$. Clearly $x_3$ and $x_4$ are different from $x_1$. Let $x_2 \in \mxl(X)-\{x_1,x_3,x_4\}$. Thus, $b_3,b_4\notin U_{x_2}$. Hence, $U_{x_2}\cap \B=\{b_1,b_2\}$.

Summing up, we have $U_{x_1}\cap \B=U_{x_2}\cap \B=\{b_1,b_2\}$ and $U_{x_3}\cap \B=U_{x_4}\cap \B=\{b_3,b_4\}$. That is to say that $X-\mnl(X)$ is homeomorphic to $\Su D_2 \amalg \Su D_2$. In a similar way we obtain that $X-\mxl(X)$ is homeomorphic to $\Su D_2 \amalg \Su D_2$. And since $X$ is connected, without loss of generality we may assume that $\widehat U_{b_1}=\widehat U_{b_3}=\{y_1,y_3\}$ and $\widehat U_{b_2}=\widehat U_{b_4}=\{y_2,y_4\}$, where $y_1$, $y_2$, $y_3$ and $y_4$ are the minimal points of $X$.

Now note that the connected components of $\{x_1,x_2,x_3,x_4,b_1,b_4\}$ and $\{b_2,b_3,y_1,y_2,y_3,y_4\}$ are contractible and hence $X$ satisfies \Spl[1], which contradicts the hypothesis.
\end{proof}

As a corollary of the previous theorem we obtain a purely mathematical proof for a result of Adamaszek which states that the first homology group of any poset with at most 12 elements is torsion-free \cite{Ad}. We ought to mention that Adamaszek gave a computer-aided proof which needed to analyse approximately 90 million cases and took more than 200 hours of processor time.

\begin{theo}\label{coro_Adamaszek}
Let $X$ be a finite T$_0$--space such that $\# X\leq 12$. Then $\pi_1(X)$ is a free group. In particular, $H_1(X)$ is torsion-free.
\end{theo}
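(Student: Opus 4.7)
The plan is to read this corollary as an almost immediate consequence of Theorem \ref{theo_13_points} together with Theorem \ref{theo_splitting_free_fundamental_group}. First I would reduce to the connected case: the fundamental group of a space is defined at a basepoint and depends only on the path-component containing it, and the first homology splits as a direct sum over path components, so it suffices to consider a single connected component $X_0$ of $X$, which is again a finite T$_0$--space with $\#X_0 \leq \#X \leq 12$.

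Next I would apply the contrapositive of Theorem \ref{theo_13_points}: since $\#X_0 \leq 12 < 13$, the space $X_0$ must satisfy condition (S1). By the remark immediately following Definition \ref{def_S1_S2}, this guarantees the existence of a splitting triad $(X_0;C,D)$ satisfying (S1), which in turn means that the inclusion of each connected component of both $C$ and $D$ into $X_0$ induces the trivial morphism on fundamental groups. Theorem \ref{theo_splitting_free_fundamental_group} then yields that $\pi_1(X_0)$ is a free group.

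For the last assertion, I would invoke Hurewicz's theorem (or just the standard identification $H_1 \cong \pi_1^{\mathrm{ab}}$ for a path-connected space) to conclude that $H_1(X_0)$ is the abelianization of a free group, hence a free abelian group, hence torsion-free. Summing over path components gives that $H_1(X)$ is torsion-free.

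There is no real obstacle here: the entire difficulty has already been absorbed into Theorem \ref{theo_13_points}. The only minor subtlety is remembering that Theorem \ref{theo_13_points} is stated for connected spaces, so one must pass to a connected component at the outset; but since removing other components does not decrease cardinality in a way that would break the bound $\#X_0 \leq 12$, this step is painless.
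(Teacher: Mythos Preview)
Your proposal is correct and follows essentially the same approach as the paper: reduce to the connected case, invoke the contrapositive of Theorem~\ref{theo_13_points} to obtain condition \Spl[1], then apply Theorem~\ref{theo_splitting_free_fundamental_group} to conclude freeness of $\pi_1$, and finally deduce that $H_1$ is torsion-free via abelianization. The paper's proof is terser but logically identical.
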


\begin{proof}
Without loss of generality, we may assume that $X$ is connected. By \ref{theo_13_points}, $X$ satisfies \Spl[1]. Thus, by \ref{theo_splitting_free_fundamental_group}, $\pi_1(X)$ is a free group. Hence, $H_1(X)$ is torsion-free.
\end{proof}

As a corollary of the results above we obtain an affirmative answer to the question about the minimality of a finite model of the real projective plane raised in \cite[Example 7.1.1]{BarLN}.

\begin{coro} \label{coro_real_proj_plane}
Let $X$ be a finite model of the real projective plane. Then $\# X\geq 13$.
\end{coro}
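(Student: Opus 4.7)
The plan is a straightforward proof by contradiction that uses Theorem \ref{coro_Adamaszek} as the main input; all the combinatorial work has already been done in Theorem \ref{theo_13_points}. The key observation is that any finite model of $\Pp^2$ has fundamental group isomorphic to $\pi_1(\Pp^2) \cong \Z/2\Z$, since a weak homotopy equivalence induces an isomorphism on $\pi_1$. This group is not free: any nontrivial free group is torsion-free, whereas $\Z/2\Z$ is pure torsion.

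First I would reduce to the case in which $X$ is a T$_0$--space. By \cite[Theorem 4]{McC}, the canonical T$_0$--quotient $q:X\to Y$ is a homotopy equivalence with $\# Y \leq \# X$, so the existence of a finite model of $\Pp^2$ with fewer than 13 points would force the existence of a T$_0$ one satisfying the same bound. Having made this reduction, I would suppose for contradiction that $\# X \leq 12$. Theorem \ref{coro_Adamaszek} then yields that $\pi_1(X)$ is a free group, contradicting $\pi_1(X)\cong \Z/2\Z$, and we conclude $\# X \geq 13$.

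There is no substantive obstacle here: the corollary is simply the topological instantiation that motivated the splitting-theoretic bound, so the only task is to line up the identifications and invoke Theorem \ref{coro_Adamaszek}. Equivalently, one can argue directly from Theorem \ref{theo_13_points}: a finite T$_0$ model of $\Pp^2$ cannot satisfy \Spl[1] (otherwise Theorem \ref{theo_splitting_free_fundamental_group} would make $\Z/2\Z$ free), hence has at least 13 points by \ref{theo_13_points}.
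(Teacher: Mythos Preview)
Your proof is correct and matches the paper's intended argument: the corollary is stated without proof as an immediate consequence of Theorem~\ref{coro_Adamaszek}, and your contradiction via $\pi_1(\Pp^2)\cong\Z/2\Z$ not being free is exactly the deduction the paper has in mind. The reduction to the T$_0$ case is appropriate (the paper notes elsewhere that minimal finite models are T$_0$ by \cite[Theorem~4]{McC}, which amounts to the same thing).
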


We will now prove a conjecture of Hardie, Vermeulen and Witbooi which states that posets with fewer than 13 points do not have 2--torsion in integral homology (cf. \cite[Remark 4.2]{HVW}). Indeed, we will prove the following stronger result.

\begin{theo}\label{theo_HVW}
Let $X$ be a finite T$_0$--space such that $H_k(X)$ has torsion for some $k\in\N$. Then $\# X\geq 13$.
\end{theo}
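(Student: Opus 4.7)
The plan is to argue by induction on $\#X$: assuming $\#X \leq 12$, I will show that $H_n(X)$ is torsion-free for all $n$. Standard reductions let us assume that $X$ is connected and has no beat points, since deleting beat points preserves weak homotopy type and the statement splits over connected components.

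The case $n=1$ is immediate: Theorem \ref{theo_13_points} shows that $X$ satisfies \Spl[1], and Theorem \ref{theo_splitting_free_fundamental_group} then gives that $\pi_1(X)$ is a free group, so $H_1(X) \cong \pi_1(X)^{\mathrm{ab}}$ is torsion-free. This reproduces Theorem \ref{coro_Adamaszek}.

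For $n \geq 2$, I would transfer the problem to the universal cover. Fix a splitting triad $(X;C,X-C)$ realizing \Spl[1]. The connected components of $C$ and of $X-C$ have strictly fewer than $\#X$ points, so by the inductive hypothesis their integral homology is torsion-free. Let $p \colon \widetilde{X} \to X$ be the universal cover; since the inclusions of these components into $X$ are all trivial on $\pi_1$, the preimages $p^{-1}(C)$ and $p^{-1}(X-C)$ split as $|\pi_1(X)|$ disjoint copies of the components of $C$ and of $X-C$ respectively, and so their homology is torsion-free as well. Corollary \ref{coro_homology_cover_space} provides
\[
H_n(\widetilde{X}, p^{-1}(C)) \cong \bigoplus_{\pi_1(X)} H_n(X, C),
\]
and, combining the long exact sequences of the pairs $(X,C)$ and $(\widetilde{X}, p^{-1}(C))$ with the torsion-freeness of $H_*(C)$ and $H_*(p^{-1}(C))$, any torsion in $H_n(X)$ would propagate to torsion in $H_n(\widetilde{X})$.

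The main obstacle is therefore to rule out torsion in $H_*(\widetilde{X})$. I would exploit the \Spl[1] structure to describe $\widetilde{X}$ as a ``tree of copies'' of the components of $C$ and $X-C$, glued according to the free group $\pi_1(X)$; this is the topological analogue of the combinatorial collapse into the category $\mathcal{C}$ in the proof of Theorem \ref{theo_splitting_free_fundamental_group}. An iterated Mayer--Vietoris argument along this tree, combined with the inductive torsion-freeness of the pieces, should then yield torsion-freeness of $H_*(\widetilde{X})$. Making the gluing sufficiently explicit in the combinatorial setting of finite posets --- where the fact that $C$ and $X-C$ are not open in $X$ forces passing to barycentric subdivision as in Lemma \ref{lemma_subdiv_X_A} --- is where the technical delicacy lies.
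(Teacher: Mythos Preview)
Your approach diverges substantially from the paper's, and the sketch has a genuine gap at its two central steps.

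First, the claim that torsion in $H_n(X)$ ``propagates'' to torsion in $H_n(\widetilde X)$ via the long exact sequences and Corollary~\ref{coro_homology_cover_space} is not justified. A torsion class in $H_n(X)$ can lie in the image of $H_n(C)\to H_n(X)$ even when $H_n(C)$ is free abelian (a free abelian group may surject onto a finite group), so it need not produce torsion in $H_n(X,C)$; and even if $H_n(X,C)$ did carry torsion, a torsion class in $H_n(\widetilde X,p^{-1}(C))$ that dies in the free group $H_{n-1}(p^{-1}(C))$ lifts to \emph{some} class in $H_n(\widetilde X)$, but there is no reason any preimage should be torsion. Nothing in your argument uses the freeness of $\pi_1(X)$ to rule these phenomena out.

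Second, the ``tree of copies'' Mayer--Vietoris step has the same defect. Gluing spaces with torsion-free homology along subspaces with torsion-free homology can create torsion: $\mathbb{RP}^2 = D^2 \cup_{S^1} (\text{M\"obius})$ is already an instance with a single edge. Moreover, after subdividing so that the cover becomes honest, the intersection piece is (weakly equivalent to) $X' \setminus (C' \cup (X\!-\!C)')$, which is not a subspace of $X$ and to which your induction on $\#X$ gives no access.

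The paper proceeds quite differently for $n\ge 2$: it never touches the universal cover or the splitting triad. Assuming $X$ is a minimal counterexample (so every proper subspace has torsion-free homology), it applies for each $p\in X$ the long exact sequence of \cite[Proposition~3.2]{CO} linking $H_*(X)$, $H_*(X-\{p\})$ and $H_*(\widehat C_p)$. This forces $H_k(\widehat C_p)\neq 0$ for every $p$, hence every point lies on a chain of height $\geq k+1\geq 3$, and likewise forces $X-U_a$ to contain comparable non-maximal elements for each maximal $a$. A short combinatorial count then shows that the top two layers and the bottom two layers each contain at least six points and are disjoint, giving $\#X\ge 12$; the boundary case $\#X=12$ is eliminated by a brief case analysis on the two possible shapes of those layers.
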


\begin{proof}
We may assume that $X$ does not have beat points and that the integral homology groups of every proper subspace of $X$ are torsion-free. Choose $k\in\N$ such that $H_k(X)$ has torsion. By \ref{coro_Adamaszek} we may assume that $k\geq 2$.

Let $p\in X$. From \cite[Proposition 3.2]{CO} we obtain that there is a long exact sequence
\begin{displaymath}
\xymatrix{\ldots \ar[r] & H_k(\widehat C_p) \ar[r] & H_k(X-\{p\}) \ar[r] & H_k(X) \ar[r] & H_{k-1}(\widehat C_p) \ar[r] & \ldots}
\end{displaymath}
If $H_k(\widehat C_p)=0$, since $H_k(X-\{p\})$ and $H_{k-1}(\widehat C_p)$ are free abelian by our assumptions we obtain that $H_k(X)$ is free abelian. Thus, $H_k(\widehat C_p)\neq 0$. In particular, $h(C_p)\geq k+1$ and hence every point $p\in X$ belongs to a chain of height $k+1$.

Let $a\in\mxl(X)$. Suppose that $X-U_a$ is an antichain. Then, by \cite[Proposition 3.2]{CO} 
$$H_k(X)=H_k(X,U_a)=\bigoplus_{x\in X-U_a}H_{k-1}(\widehat C_x)$$
which is torsion-free by one of the assumptions above. Hence, $X-U_a$ is not an antichain. Thus, $X-U_a-\mxl(X)\neq\varnothing$.

Let $d\in  \mxl(X-U_a-\mxl(X))$. By \ref{rem_minimal_space_two_points}, there exists $b,c\in\mxl(X)$ such that $b\neq c$, $b\geq d$ and $c\geq d$. Note that $b\neq a$ and $c\neq a$. Applying the same reasoning as above we obtain that $X-U_b$ is not an antichain and thus there exists a point $e\in \mxl(X-U_b-\mxl(X))$. Again, by \ref{rem_minimal_space_two_points} we obtain that $\# (F_e\cap\mxl(X))\geq 2$.
Let $X_1=\mxl(X)\cup\mxl(X-\mxl(X))$. If $F_e\cap\mxl(X)\nsubseteq \{a,b,c\}$ then $\#\mxl(X)\geq 4$ and thus $\#X_1 \geq 6$. Otherwise, $\widehat F_e=\{a,c\}$ and since $X-U_c$ is not an antichain, there exists a point $f\in \mxl( X-U_c-\mxl(X))$ which must, of course, be different from $d$ and $e$. Therefore, $\#X_1 \geq 6$ in any case.

Let $X_2=\mnl(X)\cup\mnl(X-\mnl(X))$. Applying the argument above to $X^\op$, we obtain that $\#X_2 \geq 6$. Since $H_k(X)$ has torsion it follows that $h(X)\geq k+1 \geq 3$. And since every point of $X$ belongs to a chain of height $k+1$ we obtain that $X_1\cap X_2 = \varnothing$. Therefore, $\# X\geq 12$.

Suppose now that $\# X= 12$. Then $X=X_1\cup X_2$ and $\#X_1=\#X_2 =6$. In particular, $h(X)=3$ and then $k=2$. Following the previous argument and with the notation used before, we have that either $F_e\cap\mxl(X)\nsubseteq \{a,b,c\}$ or $\widehat F_e=\{a,c\}$.

Suppose that $F_e\cap\mxl(X)\nsubseteq \{a,b,c\}$. It follows that $\#\mxl(X)=4$. Let $g$ be the only element of $F_e \cap \mxl(X)-\{a,b,c\}$. Suppose that $d\leq g$. Since $X-U_g$ is not an antichain, there exists a point $h\in \mxl(X-U_g-\mxl(X))$ which is clearly different from $d$ and $e$. Note that $h\in X_1$. Therefore, $\#X_1 \geq 7$ which entails a contradiction. Therefore $d\not\leq g$. In a similar way we obtain that $e\not\leq c$.

Thus, $\widehat{F}_e\subseteq \{a,g\}$ and hence, by \ref{rem_minimal_space_two_points}, $\widehat{F}_e = \{a,g\}$. Therefore, $X_1$ is the following poset
\begin{displaymath}
\begin{tikzpicture}[x=1.8cm,y=1.8cm]
\tikzstyle{every node}=[font=\footnotesize]
\draw (0,1) node(a){$\bullet$} node  [above=1] {$a$};
\draw (1,1) node(g){$\bullet$} node [above=1]{$g$};
\draw (2,1) node(b){$\bullet$} node  [above=1] {$b$};
\draw (3,1) node(c){$\bullet$} node  [above=1] {$c$};
\draw (2.5,0) node(d){$\bullet$} node [below=1]{$d$};
\draw (0.5,0) node(e){$\bullet$} node [below=3]{$e$};
\draw (d) -- (b);
\draw (d) -- (c);
\draw (e) -- (a);
\draw (e) -- (g);
\end{tikzpicture}
\end{displaymath}
which will be called $V$.

But, in this case, if $y$ is a maximal point of $X_2$ then $\varnothing \neq \widehat F_y \subseteq X_1$ and then the connected components of $\widehat F_y$ are contractible (since every subspace of $V$ satisfies this). Thus, $\widehat C_y=\widehat U_y\circledast \widehat F_y$ is homotopy equivalent to the non-Hausdorff join of two discrete spaces and hence $H_2(\widehat C_y)=0$, which entails a contradiction.

Therefore, $\widehat F_e=\{a,c\}$ and from \ref{rem_minimal_space_two_points} we obtain that $\widehat F_f=\{a,b\}$. Then $X_1$ is the following poset
\begin{displaymath}
\begin{tikzpicture}[x=1.8cm,y=1.8cm]
\tikzstyle{every node}=[font=\footnotesize]
\draw (0,1) node(a){$\bullet$} node  [above=1] {$a$};
\draw (1,1) node(b){$\bullet$} node  [above=1] {$b$};
\draw (2,1) node(c){$\bullet$} node  [above=1] {$c$};
\draw (2,0) node(d){$\bullet$} node [below=1]{$d$};
\draw (1,0) node(e){$\bullet$} node [below=3]{$e$};
\draw (0,0) node(f){$\bullet$} node [below=1]{$f$};
\draw (d) -- (b);
\draw (d) -- (c);
\draw (e) -- (a);
\draw (e) -- (c);
\draw (f) -- (a);
\draw (f) -- (b);
\end{tikzpicture}
\end{displaymath}
which will be called $H$.

Hence, the poset $X_1$ is homeomorphic to $H$ and thus, the poset $X_2$ is homeomorphic to $H^\op$ which is homeomorphic to $H$. By the previous argument, if $y$ is a maximal point of $X_2$ then $\widehat F_y$ can not be homotopy equivalent to a discrete space. And since $\widehat F_y\subseteq X_1$ the only possibility is that $\widehat F_y = X_1$ for all $y \in \mxl (X_2)$. It follows that $X=X_2\circledast X_1$, that is, $X$ is the following poset
\begin{displaymath}
\begin{tikzpicture}[x=2cm,y=2cm]
\tikzstyle{every node}=[font=\footnotesize]
\draw (0,3) node(a){$\bullet$} node  [above=1] {$a$};
\draw (1,3) node(b){$\bullet$} node  [above=1] {$b$};
\draw (2,3) node(c){$\bullet$} node  [above=1] {$c$};
\draw (2,2) node(d){$\bullet$} node [right=1]{$d$};
\draw (1,2) node(e){$\bullet$} node [left=1]{$e$};
\draw (0,2) node(f){$\bullet$} node [left=1]{$f$};
\draw (0,1) node(f2){$\bullet$};
\draw (1,1) node(e2){$\bullet$};
\draw (2,1) node(d2){$\bullet$};
\draw (2,0) node(c2){$\bullet$};
\draw (1,0) node(b2){$\bullet$};
\draw (0,0) node(a2){$\bullet$};
\draw (d) -- (b);
\draw (d) -- (c);
\draw (e) -- (a);
\draw (e) -- (c);
\draw (f) -- (a);
\draw (f) -- (b);
\foreach \x in {d,e,f} \foreach \y in {f2,e2,d2} \draw (\y) -- (\x);
\draw (d2) -- (b2);
\draw (d2) -- (c2);
\draw (e2) -- (a2);
\draw (e2) -- (c2);
\draw (f2) -- (a2);
\draw (f2) -- (b2);
\end{tikzpicture}
\end{displaymath}
Let $U=U_a\cup U_c$. It is easy to prove that $U$ is contractible. Then, by \cite[Proposition 3.2]{CO},
$$H_2(X)\cong H_2(X,U) \cong H_{1}(\widehat C_b)$$
which is free abelian by our initial assumptions. Thus, this case is not possible either. Therefore, $\# X\geq 13$.
\end{proof}

Now we will prove that there are just two finite models of $\Pp^2$ with 13 points: the spaces $\Pp^2_1$ and $\Pp^2_2$ of figures \ref{fig_finite_P2_1} and \ref{fig_finite_P2_2} respectively.

\begin{theo}
Let $X$ be a finite T$_0$--space such that $X$ is a finite model of the real projective plane and $\#X=13$. Then $X$ is homeomorphic to either $\Pp^2_1$ or $\Pp^2_2$.
\end{theo}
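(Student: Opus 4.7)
First, $X$ has no beat points: removing one would produce a finite model of $\Pp^2$ with $12$ points, contradicting Corollary \ref{coro_real_proj_plane}. Since $\pi_1(X)\cong\Z/2\Z$ is not free, Theorem \ref{theo_splitting_free_fundamental_group} implies that $X$ does not satisfy \Spl[1], so Lemmas \ref{lemma_relm_card_B_no_S1}, \ref{lemma_relp_card_B_no_S1} and \ref{lemma_relp_card_B_comp} apply, and Remark \ref{rem_mxl_and_cond} yields $m_X,n_X\geq 3$.

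Combining $m_X+n_X+l_X=13$ with these inequalities gives a short list of candidate triples $(m_X,n_X,l_X)$. Replacing $X$ by $X^{\op}$ when convenient (both model $\Pp^2$, and $(\Pp^2_1)^{\op}=\Pp^2_2$), the list reduces to the triple $(4,3,6)$ --- corresponding to $\Pp^2_1$ --- together with a handful of others such as $(3,5,5)$, $(4,4,5)$, $(4,5,4)$ and $(5,5,3)$. The plan is to eliminate every other candidate and then to show that $(4,3,6)$ forces $X\cong\Pp^2_1$, so that its opposite yields $\Pp^2_2$.

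In each case where the bound of Lemma \ref{lemma_relp_card_B_no_S1} is saturated --- including the target triple --- one has $\alpha_b=\beta_b=2$ for every $b\in\B$, so each $\widehat U_b$ and each $\widehat F_b$ is a two-element antichain. The Hasse diagram is then encoded by assigning to each middle point a pair from $\binom{\mxl(X)}{2}$ and a pair from $\binom{\mnl(X)}{2}$, turning the reconstruction of $X$ into a finite combinatorial problem. Each unwanted candidate can be eliminated either by computing $H_\ast(X)$ via the quasicellular chain complex of Proposition \ref{coro_quasicel_relativo} and detecting that it does not match $H_\ast(\Pp^2)$, or by exhibiting a splitting triad satisfying \Spl[1], which contradicts the initial reduction. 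Lemma \ref{lemma_relp_card_B_comp} further restricts the occurrence of comparabilities inside $\B$ in the tight triples, forcing $h(X)=2$ in most subcases; the few non-tight triples absorb their slack in at most one comparable pair in $\B$ and can be treated by the same techniques.

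Finally, in the case $(m_X,n_X,l_X)=(4,3,6)$, the equalities $\alpha_b=\beta_b=2$ together with double counting against $\Relm$ and $\Relp$ force each of the six pairs in $\binom{\mxl(X)}{2}$ to occur exactly once as some $\widehat F_b$ and each of the three pairs in $\binom{\mnl(X)}{2}$ to occur exactly twice as some $\widehat U_b$. The main obstacle is the closing step: one must show that, up to relabelling, the pairing between these two incidence patterns is unique and equals the one of Figure \ref{fig_finite_P2_1}. Any alternative pairing should produce either a beat point --- contradicting the opening reduction --- or a space with fundamental group different from $\Z/2\Z$, which can be detected using Proposition \ref{prop_non_Hausdorff_suspension} together with a van Kampen argument. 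Applying the same analysis to $X^{\op}$ handles the triple $(3,4,6)$ and recovers $\Pp^2_2$.
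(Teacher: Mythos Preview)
Your outline follows the same overall architecture as the paper --- reduce to a finite list of triples $(m_X,n_X,l_X)$ and analyse each --- but what you have written is a strategy rather than a proof, and one step is actually false as stated.

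The false step is the claim that, in the target triple $(4,3,6)$, ``double counting against $\Relm$ and $\Relp$ force each of the six pairs in $\binom{\mxl(X)}{2}$ to occur exactly once as some $\widehat F_b$''. Double counting only yields $\#\Relp^{-1}(x,y)=1$ for every $(x,y)\in\M$, and this is compatible with other $\widehat F$--patterns. For instance, one can build a height-$2$ poset with $m=4$, $n=3$, $l=6$, $\alpha_b=\beta_b=2$ for all $b$, in which the pairs $\{a_1,a_2\}$ and $\{a_3,a_4\}$ each occur twice as some $\widehat F_b$, the pairs $\{a_1,a_3\}$ and $\{a_2,a_4\}$ each occur once, and the remaining two pairs do not occur at all, while still satisfying $\#\Relp^{-1}(x,y)=1$ everywhere. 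What actually rules such configurations out is not counting but the structural Lemma~\ref{lemma_outside_Ua_S1}: for each $a\in\mxl(X)$ with $\#(\B-U_a)=2$ one gets $U_{b}\cap U_{b'}=\varnothing$ for the two excluded $b,b'$, and it is this disjointness (iterated over all maximal points) that pins down the incidence pattern. Your ``closing step'' paragraph hints that further work is needed, but the preceding sentence already overstates what counting gives.

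Beyond this, the eliminations of the non-target triples are only described, not carried out. In particular, the case $(m,n,l)=(4,4,5)$ is genuinely delicate: the bound of Lemma~\ref{lemma_relp_card_B_no_S1} is \emph{not} saturated there, so you do not get $\alpha_b=\beta_b=2$ for free, and ``computing $H_\ast$ via the quasicellular chain complex'' would require first establishing quasicellularity and then enumerating many subcases. The paper instead proves that $\B$ is an antichain, shows $\mnl(X)\subseteq U_a$ for every maximal $a$, deduces that for each $b$ one has $\min\{\alpha_b,\beta_b\}=2$, and then closes the case with an Euler-characteristic inequality yielding $\chi(X)\leq -1$. Likewise, the case $(4,5,4)$ is disposed of in the paper via a concrete contradiction obtained from Lemma~\ref{lemma_outside_Ua_S1} and Lemma~\ref{lemma_relm_card_B_no_S1}, not by a homology computation. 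Your phrase ``can be treated by the same techniques'' does not substitute for these arguments.
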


\begin{proof}
By \ref{coro_real_proj_plane}, $X$ does not have beat points. Also, by considering the opposite space $X^{\op}$, we may assume that $\#\mxl(X)\leq \#\mnl(X)$. Note also that $\#\mxl(X)\geq 3$ by \ref{rem_mxl_and_cond} and that $\mxl(X)\cap\mnl(X)=\varnothing$ since $X$ is connected and $\# X \geq 2$.

We will consider several cases.

\underline{Case 1}: $\#\mxl(X)=3$.

\underline{Case 1.1}: $\#\mnl(X)=3$. By \ref{lemma_relp_card_B_no_S1}, $l_X\geq 9$ and thus $\#X\geq 15$.

\underline{Case 1.2}: $\#\mnl(X)\geq 4$. Suppose $\mxl(X)=\{a_1,a_2,a_3\}$. By \ref{lemma_outside_Ua_S1}, there exist incomparable points $b_1,b_2 \in X-W_{a_3}^X$. By \ref{rem_minimal_space_two_points}, for $i\in\{1,2\}$ we have that $\widehat{F}_{b_i}\cap\mxl(X)\geq 2$ and since $b_i\nleq a_3$ we obtain that $\widehat{F}_{b_i}\cap\mxl(X)=\{a_1,a_2\}$. Thus, $b_1,b_2\in U_{a_1}\cap U_{a_2}-U_{a_3}$.

In a similar fashion, we obtain that there exist incomparable points $b_3,b_4\in U_{a_1}\cap U_{a_3}-U_{a_2}$ and incomparable points $b_5,b_6\in U_{a_2}\cap U_{a_3}-U_{a_1}$ which are not minimal in $X$. Note that $\{b_1,b_2,b_3,b_4,b_5,b_6\}$ is an antichain with six elements.

Since $\#X=13$, we obtain that $\#\mnl(X)= 4$. Let $\mnl(X)=\{c_1,c_2,c_3,c_4\}$. Since $X-W_{a_3}^X=\{b_1,b_2\}$, from \ref{lemma_outside_Ua_S1} it follows that $U_{b_1}\cap U_{b_2}=\varnothing$. Therefore, applying \ref{rem_minimal_space_two_points} we obtain that $\#(U_{b_1}\cap \mnl(X))=\#(U_{b_2}\cap \mnl(X))=2$ and $U_{b_1}\cup U_{b_2}=\mnl(X)$ and thus $F_{c_1}$ contains exactly one of the points $b_1$, $b_2$. A similar result holds for the points $b_3$ and $b_4$ and for the points $b_5$ and $b_6$.

Without loss of generality, we may assume that $b_2$, $b_4$ and $b_6$ belong to $F_{c_4}$. We will prove now that $\#(U_{b_1}\cap U_{b_3}) \neq 2$. Suppose that $\#(U_{b_1}\cap U_{b_3}) = 2$. Without loss of generality, we may assume that $U_{b_1}\cap U_{b_3}=\{c_1,c_2\}$. If $c_3\leq b_5$ then $X-F_{c_4}=\{b_1,b_3,b_5,c_1,c_2,c_3\}$ is homotopy equivalent to $\{b_1,b_3,c_1,c_2\}$ which is included in the contractible subspace $\{b_1,b_3,c_1,c_2,a_1\}$. Thus, the inclusion of $X-F_{c_4}$ in $X$ induces the trivial map between the fundamental groups and hence $X$ satisfies \Spl[1]. If $c_3 \nleq b_5$ then $c_1\leq b_5$ and $c_2\leq b_5$ and thus the connected components of $X-F_{c_4}$ are $\{c_3\}$ and $\{b_1,b_3,b_5,c_1,c_2\}$. Note that the latter is included in the contractible subspace $\{b_1,b_3,b_5,c_1,c_2,a_1,a_3\}$. Hence, $X$ satisfies \Spl[1]. Therefore, $\#(U_{b_1}\cap U_{b_3}) \neq 2$.

Hence, $\#(U_{b_1}\cap U_{b_3})=1$ since $\widehat U_{b_1}$ and $\widehat U_{b_3}$ are included in $\{c_1,c_2,c_3\}$. In a similar way, $\#(U_{b_1}\cap U_{b_5})=1$ and $\#(U_{b_3}\cap U_{b_5})=1$. Hence, without loss of generality we may assume that 
$\widehat U_{b_1}=\{c_1,c_2\}$, $\widehat U_{b_3}=\{c_1,c_3\}$ and $\widehat U_{b_5}=\{c_2,c_3\}$. It follows that $\widehat U_{b_2}=\{c_3,c_4\}$, $\widehat U_{b_4}=\{c_2,c_4\}$ and $\widehat U_{b_6}=\{c_1,c_4\}$. Note that in this case $X$ is homeomorphic to the space $\Pp^2_2$ of figure \ref{fig_finite_P2_2}.

\underline{Case 2}: $\#\mxl(X)=4$. Let $a_1$, $a_2$, $a_3$ and $a_4$ be the maximal points of $X$.

\underline{Case 2.1}: $\#\mnl(X)=4$. Then $\#\B=5$. If there exist $d_1,d_2\in\B$ such that $d_1<d_2$ then, from \ref{lemma_relp_card_B_comp}, we obtain that $\alpha_b=\beta_b=2$ for all $b\in\B$. Hence, $F_{d_1}\cap\mxl(X)=F_{d_2}\cap\mxl(X)$. Without loss of generality, we may assume that $F_{d_1}\cap\mxl(X)=\{a_1,a_2\}$.

Since $X$ does not have beat points, $\widehat F_{d_1}$ does not have minimum element. Thus, $\#\mnl(\widehat F_{d_1})\geq 2$ and it is easy to prove that $\mnl(\widehat F_{d_1})\subseteq \B$. Let $e_1$, $e_2$ be distinct elements of $\mnl(\widehat F_{d_1})$. Note that $F_{e_1}\cap\mxl(X)=F_{e_2}\cap\mxl(X)=\{a_1,a_2\}$. In a similar way, $\#\mxl(\widehat U_{e_1})\geq 2$ and $\mxl(\widehat U_{e_1})\subseteq \B$. Thus, $\#(U_{a_1}\cap \B) \geq 4$ which contradicts \ref{lemma_outside_Ua_S1}.

Therefore, $\B$ must be an antichain.

Since $\#(F_b\cap\mxl(X))\geq 2$ for all $b\in \B$ it follows that there exists $a\in\mxl(X)$ such that $\#(U_a\cap \B)\geq 3$. Without loss of generality, we may assume that $\#(U_{a_1}\cap \B)\geq 3$. By \ref{lemma_outside_Ua_S1}, there exist incomparable points $b_1,b_2\in X-W_{a_1}^X$. Hence, $\#(U_{a_1}\cap \B)= 3$ and $\B-U_{a_1}=\{b_1,b_2\}$. Thus, $U_{b_1}\cap U_{b_2}=\varnothing$ by \ref{lemma_outside_Ua_S1}. Hence, from \ref{rem_minimal_space_two_points} it follows that $\#(U_{b_1}\cap\mnl(X))=2$ and $\#(U_{b_2}\cap\mnl(X))=2$.

Now note that, by \ref{rem_minimal_space_two_points}, $F_{b_1}\cap F_{b_2}\neq \varnothing$. If $\#(F_{b_1}\cap F_{b_2})=1$ then $X-V_{a_1}^X$ is contractible and hence $X$ satisfies \Spl[1]. Thus $\#(F_{b_1}\cap F_{b_2})\geq 2$. Without loss of generality we may assume that $\{a_3,a_4\}\subseteq F_{b_1}\cap F_{b_2}$.

It follows that $U_{a_j}\supseteq \mnl(X)$ for $j\in\{3,4\}$. We will prove now that the same holds for $j\in\{1,2\}$. Note that $\#(U_a\cap \B)\leq 3$ for all $a\in\mxl(X)$ by \ref{lemma_outside_Ua_S1}. If $\#(U_{a_k}\cap \B)=3$ for some $k\in\{3,4\}$ then applying the reasoning above to $a_k$ shows that $U_{a_j}\supseteq \mnl(X)$ for $j\in\{1,2\}$. Otherwise, $U_{a_j}\cap \B=\{b_1,b_2\}$ for $j\in\{3,4\}$ and hence $\B - \{b_1,b_2\}\subseteq U_{a_2}$. Let $b_3,b_4,b_5\in \B$ such that $\B - \{b_1,b_2\}=\{b_3,b_4,b_5\}$. If $\#(U_{b_3}\cup U_{b_4}\cup U_{b_5})\leq 3$ then either there exists $z\in \mnl(X)$ such that $z\in U_{b_3}\cap U_{b_4}\cap U_{b_5}$, or there exist distinct points $z_1,z_2 \in \mnl(X)$ such that each of them belongs to exactly two of the subsets $U_{b_3}$, $U_{b_4}$ and $U_{b_5}$ and such that $\{b_3,b_4,b_5\}\subseteq F_{z_1}\cup F_{z_2}$. Note that in both cases the subspace $\{a_1,a_2,b_3,b_4,b_5\}$ is included in a contractible subspace of $X$. It follows that the splitting triad $(X;X-V_{a_4},V_{a_4})$ satisfies \Spl[1] which entails a contradiction. Therefore, $U_a\supseteq \mnl(X)$ for all $a\in \mxl(X)$.

Now, note that if $b\in\B$ is such that $\alpha_b\geq 3$ and $\beta_b\geq 3$ then the connected components of $X-C_b$ are contractible and hence $X$ satisfies \Spl[1]. Thus, for all $b\in\B$ either $\alpha_b=2$ or $\beta_b=2$. Hence, $(\alpha_b-1)(\beta_b-1)=3-\frac{1}{2}(4-\alpha_b)(4-\beta_b)$ for all $b\in\B$.

As in the proof of \ref{lemma_relp_card_B_no_S1} we obtain that $16\leq \#\Relp=\sum\limits_{b\in \B}(4-\alpha_b)(4-\beta_b)$. Thus, it follows that the Euler characteristic of $X$ is
\begin{displaymath}
\begin{array}{rcl}
\chi(X) & = & \displaystyle 13-\left(\sum_{b\in \B}\alpha_b + \sum_{b\in \B}\beta_b + 16\right)+\sum_{b\in \B}\alpha_b\beta_b = \\
& = & \displaystyle -8+ \sum_{b\in \B}(\alpha_b-1)(\beta_b-1) = -8+ \sum_{b\in \B} \left(3-\frac{1}{2}(4-\alpha_b)(4-\beta_b)\right) = \\ 
& = & \displaystyle 7-\frac{1}{2}\sum_{b\in \B} (4-\alpha_b)(4-\beta_b) \leq -1 .
\end{array}
\end{displaymath}
Hence, $X$ can not be a finite model of the real projective plane.

\underline{Case 2.2}: $\#\mnl(X)\geq 5$. From \ref{lemma_relm_card_B_no_S1} it follows that $\#\B=4$ and that $\alpha_b=2$ for all $b\in\B$ and $\#\Relm^{-1}(a)=2$ for all $a\in\mxl(X)$. Since $\#\B=4$ we obtain that $\#\mnl(X)= 5$. From \ref{lemma_relp_card_B_comp} it follows that $\B$ must be an antichain. Proceeding as in the proof of \ref{theo_13_points} we obtain that $X-\mnl(X)$ is homeomorphic to $\Su D_2 \amalg \Su D_2$.

Without loss of generality we may assume that $U_{a_1}\cap \B=U_{a_2}\cap \B=\{b_1,b_2\}$ and $U_{a_3}\cap \B=U_{a_4}\cap \B=\{b_3,b_4\}$. By \ref{lemma_outside_Ua_S1}, $U_{b_1}\cap U_{b_2}= \varnothing$ and $U_{b_3}\cap U_{b_4}= \varnothing$.

Let $i\in\{1,2\}$ and $j\in\{3,4\}$. If $\#(U_{b_i}\cap U_{b_j})\leq 1$ then the connected components of $\{b_i,b_j\}\cup\mnl(X)$ and its complement are contractible and hence $X$ satisfies \Spl[1]. Therefore $\#(U_{b_i}\cap U_{b_j})\geq 2$ for all $i\in\{1,2\}$ and $j\in\{3,4\}$. And since $U_{b_3}\cap U_{b_4}= \varnothing$ we obtain that $\#\widehat U_{b_i}\geq 4$ for all $i\in\{1,2\}$. Thus, $U_{b_1}\cap U_{b_2}\neq \varnothing$, a contradiction.

\underline{Case 3}: $\#\mxl(X)\geq 5$. Let $a_1\in\mxl(X)$. By \ref{lemma_outside_Ua_S1}, there exist incomparable points $b_1,b_2\in X-W_{a_1}^X$. Moreover, we may assume that $F_{b_1}\cap F_{b_2}\neq \varnothing$ since $F_y\cap F_z=\varnothing$ for all $y,z\in X-W_{a_1}^X$ implies that the connected components of $X-V_{a_1}^X$ are contractible and hence $X$ satisfies \Spl[1].

Let $a_2\in\mxl(X)$ such that $a_2\in F_{b_1}\cap F_{b_2}$. By \ref{lemma_outside_Ua_S1}, there exist incomparable points $b_3,b_4\in X-W_{a_2}^X$ which must, of course, be different from $b_1$ and $b_2$. Therefore $\#X\geq 14$.
\end{proof}

\section{Minimal finite models of the torus and the Klein bottle} \label{section_torus}

In this section we will prove that the space $\Su S^0\times \Su S^0$ is a minimal finite model of the torus, answering another open question of Barmak \cite[p. 44]{BarLN}. This will be obtained as a corollary of theorem \ref{theo_16_points} which states that if a poset $X$ does not satisfy \Spl[2] then $\#X\geq 16$. From this theorem we will also infer that there do not exist finite models of the Klein bottle with fewer than 16 points.

With further development of our techniques we will also find all the minimal finite models of the torus and the Klein bottle, which is a much harder task. Indeed, we will prove that there exist exactly two minimal finite models of the torus: the spaces $\T_{0,0}$ and $\T_{1,1}$ of figures \ref{fig_finite_Q00} and \ref{fig_finite_Q11}. It is worth mentioning that the space $\T_{1,1}$ was not previously known and appears naturally in the proof of \ref{theo_torus_Klein}. In the same proof, we will also show that there exist exactly two minimal finite models of the Klein bottle: the space $\Kl_{1,0}$ of figure \ref{fig_finite_Q10} and its opposite, the space $\Kl_{0,1}$ of figure \ref{fig_finite_Q01}.

We will begin by describing the aforementioned finite models of the torus and the Klein bottle. Consider the regular CW-complex structure of the torus $T^{2}$ given in figure \ref{fig_CW_Q00} with 2--cells $a_i$, 1--cells $b_j$, and 0--cells $c_k$, where the 0--cells and the 1--cells are identified in the standard way. The face poset of this regular CW-complex, given in figure \ref{fig_finite_Q00}, is a finite model of $T^{2}$. 

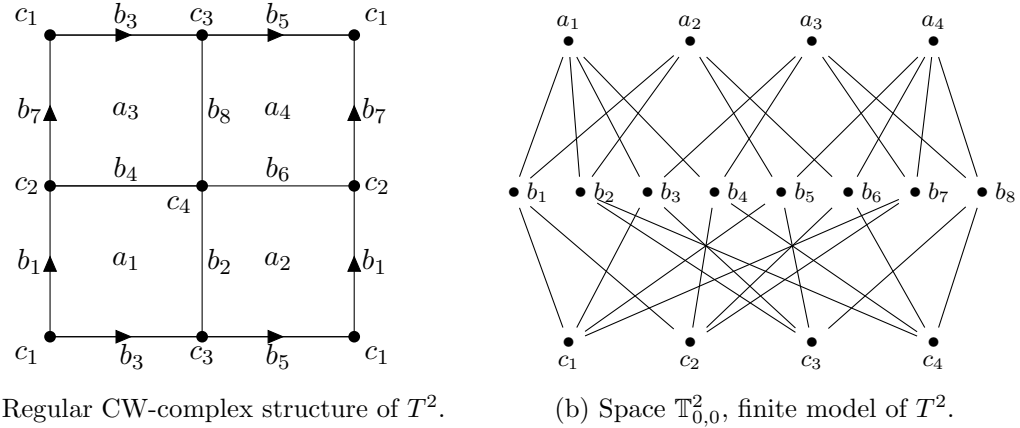
\begin{figure}[h!]
\label{fig_Q00}
  \begin{subfigure}[t]{0.45\textwidth}
  \begin{center}
    \begin{tikzpicture}[x=4cm,y=4cm]
	\tikzset{->->-/.style={decoration={
	markings,
	mark=at position 0.27 with {\arrow{triangle 45}},
	mark=at position 0.77 with {\arrow{triangle 45}}},postaction={decorate}}}
	\tikzset{->-/.style={decoration={
	markings,
	mark=at position 0.57 with {\arrow{triangle 45}}},postaction={decorate}}}
	\tikzstyle{every node}=[font=\normalsize]

	\draw(0,0) node[below left]{$c_1$} -- (0.5,0);
	\draw(0.5,0) node[below]{$c_3$}--	(1,0) node[below right]{$c_1$};
	\draw(0,0.5) node[left]{$c_2$} --	(0.5,0.5) node[below left]{$c_4$};
	\draw(0,0.5)--(1,0.5) node[right]{$c_2$};
	\draw (0,1) node[above left]{$c_1$} --(0.5,1);
	\draw(0.5,1) node[above]{$c_3$}--	(1,1) node[above right]{$c_1$};
	
	\draw (0.5,0) -- (0.5,1);

	\draw[->->-] 	(0,0) -- (1,0);
	\draw[->->-] (0,1) -- (1,1);
	\draw[->->-] 	(0,0) -- (0,1);
	\draw[->->-] (1,0) -- (1,1);

	\fill[fill=black] (0,0) circle (.2em);
	\fill[fill=black] (0,0.5) circle (.2em);
	\fill[fill=black] (0,1) circle (.2em);
	\fill[fill=black] (0.5,0) circle (.2em);
	\fill[fill=black] (0.5,0.5) circle (.2em);
	\fill[fill=black] (0.5,1) circle (.2em);
	\fill[fill=black] (1,0) circle (.2em);
	\fill[fill=black] (1,0.5) circle (.2em);
	\fill[fill=black] (1,1) circle (.2em);
	
	\draw(0,0.25) node[left=-1] {$b_1$};
	\draw(0,0.75) node[left=-1] {$b_7$};
	\draw(1,0.25) node[right=-1] {$b_1$};
	\draw(1,0.75) node[right=-1] {$b_7$};
	\draw(0.27,0) node[below=-1] {$b_3$};
	\draw(0.75,0) node[below=-1] {$b_5$};
	\draw(0.25,1) node[above=-1] {$b_3$};
	\draw(0.75,1) node[above=-1] {$b_5$};
	\draw (0.5,0.25) node[right=-2]{$b_2$};
	\draw (0.5,0.75) node[right=-2]{$b_8$};
	\draw (0.25,0.5) node[above=-2]{$b_4$};
	\draw (0.75,0.5) node[above=-2]{$b_6$};
	\draw (0.25,0.25) node{$a_1$};
	\draw (0.25,0.75) node{$a_3$};
	\draw (0.75,0.25) node{$a_2$};
	\draw (0.75,0.75) node{$a_4$};
      \end{tikzpicture}
    \caption{Regular CW-complex structure of $T^{2}$.}
    \label{fig_CW_Q00}
      \end{center}
  \end{subfigure}
  \quad
  \begin{subfigure}[t]{0.45\textwidth}
    \begin{tikzpicture}[x=4cm,y=4cm]
    
\tikzstyle{every node}=[font=\footnotesize]

\foreach \x in {1,...,4} \draw (0.4*\x,1) node(a\x){$\bullet$} node[above=1]{$a_{\x}$};
\foreach \x in {1,...,8} \draw (0.22*\x,0.5) node(b\x){$\bullet$} node[right=1]{$b_{\x}$};
\foreach \x in {1,...,4} \draw (0.4*\x,0) node(c\x){$\bullet$} node[below=1]{$c_{\x}$};

\foreach \x in {1,2,3,4} \draw (a1)--(b\x);
\foreach \x in {1,2,5,6} \draw (a2)--(b\x);
\foreach \x in {3,4,7,8} \draw (a3)--(b\x);
\foreach \x in {5,6,7,8} \draw (a4)--(b\x);

\foreach \x in {1,3,5,7} \draw (c1)--(b\x);
\foreach \x in {1,4,6,7} \draw (c2)--(b\x);
\foreach \x in {2,3,5,8} \draw (c3)--(b\x);
\foreach \x in {2,4,6,8} \draw (c4)--(b\x);

\end{tikzpicture}
  \caption{Space $\T_{0,0}$, finite model of $T^{2}$.}
  \label{fig_finite_Q00}
  \end{subfigure}
  \caption{Cellular and finite models of $T^{2}$.}

\end{figure}

We will refer to this space as $\T_{0,0}$, name that follows from the proof of theorem \ref{theo_torus_Klein} below. This finite space is homeomorphic to $\Su S^0\times \Su S^0$.

In a similar way, we can construct the finite model of the Klein bottle of figure \ref{fig_finite_Q10} as the face poset of the regular CW-complex structure of figure \ref{fig_CW_Q10}. The Klein bottle will be denoted by $\Kl$.

\begin{figure}[h!]
  \begin{subfigure}[t]{0.45\textwidth}
  \begin{center}
 \begin{tikzpicture}[x=4cm,y=4cm]
   \tikzstyle{every node}=[font=\normalsize]

  \draw[color=black]
    (0,0) node[below left]{$c_1$} --
	(0.5,0) node[below]{$c_3$}--
	(1,0) node[below right]{$c_1$};
  \draw[color=black]
    (0,0.5) node[left]{$c_2$} --
	(0.5,0.5) node[below left]{$c_4$}--
	(1,0.5) node[right]{$c_2$};
  \draw[color=black]
    (0,1) node[above left]{$c_1$} --
	(0.5,1) node[above]{$c_3$}--
	(1,1) node[above right]{$c_1$};
  \draw 	(0,0) -- (0,1);
  \draw (0.5,0) -- (0.5,1);
  \draw (1,0) -- (1,1);

 \draw(0,0.25) node[left=-1] {$b_4$};
 \draw(0,0.75) node[left=-1] {$b_5$};
 \draw(1,0.25) node[right=-1] {$b_5$};
 \draw(1,0.75) node[right=-1] {$b_4$};

 \draw(0.27,0) node[below=-1] {$b_1$};
 \draw(0.75,0) node[below=-1] {$b_7$};
 \draw(0.25,1) node[above=-1] {$b_1$};
 \draw(0.75,1) node[above=-1] {$b_7$};

\draw (0.5,0.25) node[right=-1]{$b_3$};
\draw (0.5,0.75) node[right=-1]{$b_6$};
\draw (0.25,0.5) node[above=-1]{$b_2$};
\draw (0.75,0.5) node[above=-1]{$b_8$};
\draw (0.25,0.25) node{$a_1$};
\draw (0.25,0.75) node{$a_2$};
\draw (0.75,0.25) node{$a_3$};
\draw (0.75,0.75) node{$a_4$};

 \fill[fill=black] (0,0) circle (.2em);
 \fill[fill=black] (0,0.5) circle (.2em);
 \fill[fill=black] (0,1) circle (.2em);
 \fill[fill=black] (0.5,0) circle (.2em);
 \fill[fill=black] (0.5,0.5) circle (.2em);
 \fill[fill=black] (0.5,1) circle (.2em);
 \fill[fill=black] (1,0) circle (.2em);
 \fill[fill=black] (1,0.5) circle (.2em);
 \fill[fill=black] (1,1) circle (.2em);

 \draw[\arr] (0,0) -- (0,0.28); 
 \draw[\arr] (0,0) -- (0,0.78); 
 \draw[\arr] (1,0.28) -- (1,0.22);
 \draw[\arr] (1,0.78) -- (1,0.72);

 \draw[\arr] (0,0) -- (0.28,0);
 \draw[\arr] (0,0) -- (0.78,0);
 \draw[\arr] (0,1) -- (0.28,1);
 \draw[\arr] (0,1) -- (0.78,1);

\end{tikzpicture}
    \caption{Regular CW-complex structure of $\Kl$.}
    \label{fig_CW_Q10}
      \end{center}
  \end{subfigure}
  \quad
  \begin{subfigure}[t]{0.45\textwidth}
  \begin{tikzpicture}[x=4cm,y=4cm]
   \tikzstyle{every node}=[font=\footnotesize]

\foreach \x in {1,...,4} \draw (0.4*\x,1) node(a\x){$\bullet$} node[above=1]{$a_{\x}$};
\foreach \x in {1,...,8} \draw (0.22*\x,0.5) node(b\x){$\bullet$} node[right=1]{$b_{\x}$};
\foreach \x in {1,...,4} \draw (0.4*\x,0) node(c\x){$\bullet$} node[below=1]{$c_{\x}$};

\foreach \x in {1,2,3,4} \draw (a1)--(b\x);
\foreach \x in {1,2,5,6} \draw (a2)--(b\x);
\foreach \x in {3,5,7,8} \draw (a3)--(b\x);
\foreach \x in {4,6,7,8} \draw (a4)--(b\x);

\foreach \x in {1,4,5,7} \draw (c1)--(b\x);
\foreach \x in {2,4,5,8} \draw (c2)--(b\x);
\foreach \x in {1,3,6,7} \draw (c3)--(b\x);
\foreach \x in {2,3,6,8} \draw (c4)--(b\x);

\end{tikzpicture}
 
  \caption{Space $\Kl_{1,0}$, finite model of $\Kl$.}
  \label{fig_finite_Q10}
  \end{subfigure}
  \caption{Cellular and finite models of $\Kl$.}
  \label{fig_Q10}
\end{figure}

As was mentioned above, another finite model of the torus will be found in the proof of \ref{theo_torus_Klein}: the space $\T_{1,1}$ of figure \ref{fig_finite_Q11}. Observe that $\T_{1,1}$ is the face poset of the regular CW-complex structure of $T^{2}$ given in figure \ref{fig_CW_Q11}

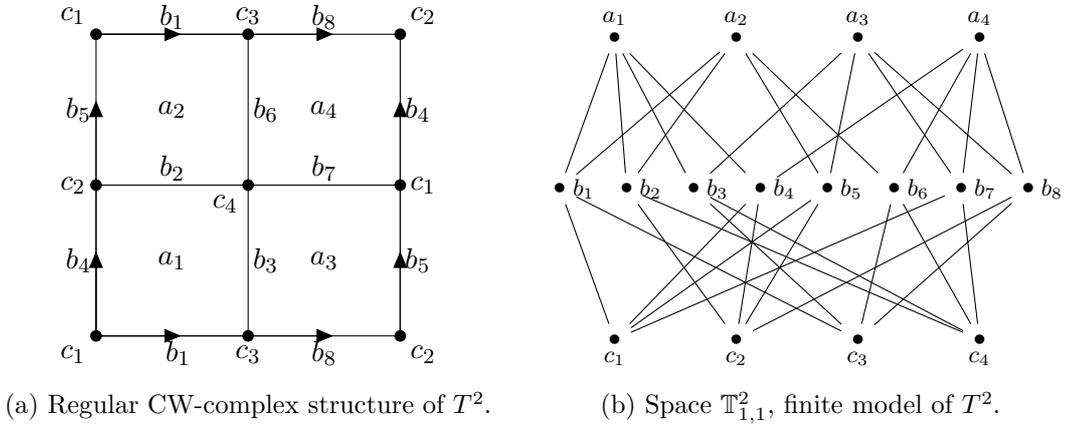
\begin{figure}[h!]
\label{fig_Q11}
  \begin{subfigure}[t]{0.45\textwidth}
  \begin{center}
  \begin{tikzpicture}[x=4cm,y=4cm]
   \tikzstyle{every node}=[font=\normalsize]

  \draw[color=black]
    (0,0) node[below left]{$c_1$} --
	(0.5,0) node[below]{$c_3$}--
	(1,0) node[below right]{$c_2$};
  \draw[color=black]
    (0,0.5) node[left]{$c_2$} --
	(0.5,0.5) node[below left]{$c_4$}--
	(1,0.5) node[right]{$c_1$};
  \draw[color=black]
    (0,1) node[above left]{$c_1$} --
	(0.5,1) node[above]{$c_3$}--
	(1,1) node[above right]{$c_2$};
  \draw 	(0,0) -- (0,1);
  \draw (0.5,0) -- (0.5,1);
  \draw (1,0) -- (1,1);

 \fill[fill=black] (0,0) circle (.2em);
 \fill[fill=black] (0,0.5) circle (.2em);
 \fill[fill=black] (0,1) circle (.2em);
 \fill[fill=black] (0.5,0) circle (.2em);
 \fill[fill=black] (0.5,0.5) circle (.2em);
 \fill[fill=black] (0.5,1) circle (.2em);
 \fill[fill=black] (1,0) circle (.2em);
 \fill[fill=black] (1,0.5) circle (.2em);
 \fill[fill=black] (1,1) circle (.2em);

 \draw[\arr] (0,0) -- (0,0.28); 
 \draw[\arr] (0,0) -- (0,0.78); 
 \draw[\arr] (1,0) -- (1,0.28);
 \draw[\arr] (1,0) -- (1,0.78);

 \draw[\arr] (0,0) -- (0.28,0);
 \draw[\arr] (0,0) -- (0.78,0);
 \draw[\arr] (0,1) -- (0.28,1);
 \draw[\arr] (0,1) -- (0.78,1);

 \draw(0,0.25) node[left=-2] {$b_4$};
 \draw(0,0.75) node[left=-2] {$b_5$};
 \draw(1,0.25) node[right=-2] {$b_5$};
 \draw(1,0.75) node[right=-2] {$b_4$};

 \draw(0.27,0) node[below=-2] {$b_1$};
 \draw(0.75,0) node[below=-2] {$b_8$};
 \draw(0.25,1) node[above=-2] {$b_1$};
 \draw(0.75,1) node[above=-2] {$b_8$};

\draw (0.5,0.25) node[right=-2]{$b_3$};
\draw (0.5,0.75) node[right=-2]{$b_6$};
\draw (0.25,0.5) node[above=-2]{$b_2$};
\draw (0.75,0.5) node[above=-2]{$b_7$};
\draw (0.25,0.25) node{$a_1$};
\draw (0.25,0.75) node{$a_2$};
\draw (0.75,0.25) node{$a_3$};
\draw (0.75,0.75) node{$a_4$};

\end{tikzpicture}
    \caption{Regular CW-complex structure of $T^{2}$.}
    \label{fig_CW_Q11}
      \end{center}
  \end{subfigure}
  \quad
  \begin{subfigure}[t]{0.45\textwidth}
\begin{tikzpicture}[x=4cm,y=4cm]
   \tikzstyle{every node}=[font=\footnotesize]

\foreach \x in {1,...,4} \draw (0.4*\x,1) node(a\x){$\bullet$} node[above=1]{$a_{\x}$};
\foreach \x in {1,...,8} \draw (0.22*\x,0.5) node(b\x){$\bullet$} node[right=1]{$b_{\x}$};
\foreach \x in {1,...,4} \draw (0.4*\x,0) node(c\x){$\bullet$} node[below=1]{$c_{\x}$};

\foreach \x in {1,2,3,4} \draw (a1)--(b\x);
\foreach \x in {1,2,5,6} \draw (a2)--(b\x);
\foreach \x in {3,5,7,8} \draw (a3)--(b\x);
\foreach \x in {4,6,7,8} \draw (a4)--(b\x);

\foreach \x in {1,4,5,7} \draw (c1)--(b\x);
\foreach \x in {2,4,5,8} \draw (c2)--(b\x);
\foreach \x in {1,3,6,8} \draw (c3)--(b\x);
\foreach \x in {2,3,6,7} \draw (c4)--(b\x);

\end{tikzpicture}
  \caption{Space $\T_{1,1}$, finite model of $T^{2}$.}
  \label{fig_finite_Q11}
  \end{subfigure}
  \caption{Cellular and finite models of $T^{2}$.}

\end{figure}

On the other hand, it is clear that $\Kl_{1,0}^\op$ is another finite model of the Klein bottle. It is easy to check that $\Kl_{1,0}^\op$ is homeomorphic to the space $\Kl_{0,1}$ of figure \ref{fig_finite_Q01}, which is the face poset of the regular CW-complex structure of $\Kl$ given in figure \ref{fig_CW_Q01}.

\begin{figure}[h!]
  \begin{subfigure}[t]{0.45\textwidth}
  \begin{center}
\begin{tikzpicture}[x=4cm,y=4cm]
   \tikzstyle{every node}=[font=\normalsize]

   \draw[color=black]
    (0,0) node[below left]{$c_1$} --
	(0.5,0) node[below]{$c_3$}--
	(1,0) node[below right]{$c_2$};
  \draw[color=black]
    (0,0.5) node[left]{$c_2$} --
	(0.5,0.5) node[below left]{$c_4$}--
	(1,0.5) node[right]{$c_1$};
  \draw[color=black]
    (0,1) node[above left]{$c_1$} --
	(0.5,1) node[above]{$c_3$}--
	(1,1) node[above right]{$c_2$};
  \draw 	(0,0) -- (0,1);
  \draw (0.5,0) -- (0.5,1);
  \draw (1,0) -- (1,1);

 \draw(0,0.25) node[left=-1] {$b_1$};
 \draw(0,0.75) node[left=-1] {$b_7$};
 \draw(1,0.25) node[right=-1] {$b_1$};
 \draw(1,0.75) node[right=-1] {$b_7$};

 \draw(0.27,0) node[below=-1] {$b_3$};
 \draw(0.75,0) node[below=-1] {$b_6$};
 \draw(0.25,1) node[above=-1] {$b_3$};
 \draw(0.75,1) node[above=-1] {$b_6$};

\draw (0.5,0.25) node[right=-2]{$b_2$};
\draw (0.5,0.75) node[right=-2]{$b_8$};
\draw (0.25,0.5) node[above=-2]{$b_4$};
\draw (0.75,0.5) node[above=-2]{$b_5$};
\draw (0.25,0.25) node{$a_1$};
\draw (0.25,0.75) node{$a_3$};
\draw (0.75,0.25) node{$a_2$};
\draw (0.75,0.75) node{$a_4$};

 \fill[fill=black] (0,0) circle (.2em);
 \fill[fill=black] (0,0.5) circle (.2em);
 \fill[fill=black] (0,1) circle (.2em);
 \fill[fill=black] (0.5,0) circle (.2em);
 \fill[fill=black] (0.5,0.5) circle (.2em);
 \fill[fill=black] (0.5,1) circle (.2em);
 \fill[fill=black] (1,0) circle (.2em);
 \fill[fill=black] (1,0.5) circle (.2em);
 \fill[fill=black] (1,1) circle (.2em);

 \draw[\arr] (0,0) -- (0,0.28); 
 \draw[\arr] (0,0) -- (0,0.78); 
 \draw[\arr] (1,0.28)--(1,0.22);
 \draw[\arr] (1,0.78) -- (1,0.72);

 \draw[\arr] (0,0) -- (0.28,0);
 \draw[\arr] (0,0) -- (0.78,0);
 \draw[\arr] (0,1) -- (0.28,1);
 \draw[\arr] (0,1) -- (0.78,1);
\end{tikzpicture}
    \caption{Regular CW-complex structure of $\Kl$.}
    \label{fig_CW_Q01}
      \end{center}
  \end{subfigure}
  \quad
  \begin{subfigure}[t]{0.45\textwidth}
\begin{tikzpicture}[x=4cm,y=4cm]
   \tikzstyle{every node}=[font=\footnotesize]

\foreach \x in {1,...,4} \draw (0.4*\x,1) node(a\x){$\bullet$} node[above=1]{$a_{\x}$};
\foreach \x in {1,...,8} \draw (0.22*\x,0.5) node(b\x){$\bullet$} node[right=1]{$b_{\x}$};
\foreach \x in {1,...,4} \draw (0.4*\x,0) node(c\x){$\bullet$} node[below=1]{$c_{\x}$};

\foreach \x in {1,2,3,4} \draw (a1)--(b\x);
\foreach \x in {1,2,5,6} \draw (a2)--(b\x);
\foreach \x in {3,4,7,8} \draw (a3)--(b\x);
\foreach \x in {5,6,7,8} \draw (a4)--(b\x);

\foreach \x in {1,3,5,7} \draw (c1)--(b\x);
\foreach \x in {1,4,6,7} \draw (c2)--(b\x);
\foreach \x in {2,3,6,8} \draw (c3)--(b\x);
\foreach \x in {2,4,5,8} \draw (c4)--(b\x);

\end{tikzpicture}
  \caption{Space $\Kl_{0,1}$, finite model of $\Kl$.}
  \label{fig_finite_Q01}
  \end{subfigure}
  \caption{Cellular and finite models of $\Kl$.}
  \label{fig_Q01}
\end{figure}

We will give now two lemmas that will be used several times in this section.

\begin{lemma} \label{lemma_relp_no_S2}
Let $X$ be a finite and connected T$_0$--space without beat points. If $X$ does not satisfy condition \Spl[2], then for all $(x,y)\in \M$ either $\#\Relp^{-1} (x,y)\geq 2$ or $\Relp^{-1} (x,y)=\{b\}$ and $\min\{\alpha_b,\beta_b\}\geq 3$.
\end{lemma}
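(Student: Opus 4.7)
The plan is to argue by contrapositive: assuming there exists $(x,y)\in\M$ with $\#\Relp^{-1}(x,y)\le 1$ and, in case $\Relp^{-1}(x,y)=\{b\}$, $\min\{\alpha_b,\beta_b\}=2$ (the equality uses $\alpha_b,\beta_b\ge 2$, valid since $X$ has no beat points), I will exhibit a splitting triad $(X;C,D)$ satisfying \Spl[2]. The case $\Relp^{-1}(x,y)=\varnothing$ is immediate: the constraint $\B\subseteq U_x\cup F_y$ yields $V_x\cup G_y=X$, and the triad $(X;V_x,G_y)$ works because each component of $V_x$ (the contractible star $U_x$ plus isolated minimal points) and each component of $G_y$ is contractible.

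Suppose now $\Relp^{-1}(x,y)=\{b_0\}$. Passing to $X^{\op}$ if necessary I may assume $\beta_{b_0}=2$; set $\{y_1,y_2\}=\widehat U_{b_0}\cap\mnl(X)$. Consider the triad $(X;V_x\cup\{b_0\},G_y)$, whose side $G_y$ remains a disjoint union of contractible pieces. The only non-obvious component of $V_x\cup\{b_0\}$ is the one containing $b_0$, which I analyse according to how $\{y_1,y_2\}$ meets $U_x$. If at most one of $y_1,y_2$ lies in $U_x$, a direct order-preserving retraction reveals the $b_0$-component to be either the contractible three-point fan $\{b_0,y_1,y_2\}$, or $U_x$ with a short whisker attached through $b_0$, again contractible. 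If both $y_i\in U_x$, the hypothesis $\Relp^{-1}(x,y)=\{b_0\}$ forces every $z\in\widehat U_{b_0}\cap\B$ to lie in $U_x$: indeed $z\in F_y$ would give $y\le z<b_0$, contradicting $b_0\notin F_y$. Hence $U_{b_0}\subseteq U_x\cup\{b_0\}$, the $b_0$-component equals $U_x\cup U_{b_0}$, and by Proposition~\ref{prop_non_Hausdorff_suspension} it is weakly equivalent to $\Su\widehat U_{b_0}$.

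The principal obstacle is the last configuration. When $\widehat U_{b_0}=\{y_1,y_2\}$ we obtain $\Su\widehat U_{b_0}\simeq S^1$ and \Spl[2] follows with a single $S^1$-component. Otherwise $\widehat U_{b_0}$ contains further $\B$-elements, each forced by the same minimal-predecessor argument to lie above both $y_i$, and the above splitting need not yield the right homotopy type. In that residual case I plan to switch to the alternative triad $(X;C_{b_0},X-C_{b_0})$, whose $C$-side is the always-contractible star $C_{b_0}=U_{b_0}\cup F_{b_0}$; the technical heart of the proof then becomes verifying the splitting-triad condition on $X-C_{b_0}$, i.e.\ that each of its components induces the trivial morphism on $\pi_1(X)$, which I would pursue by exploiting that $X-C_{b_0}$ consists entirely of elements incomparable with $b_0$ together with the tight structural restrictions imposed by $\Relp^{-1}(x,y)=\{b_0\}$.
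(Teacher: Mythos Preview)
Your approach is essentially the dual of the paper's: you reduce to $\beta_{b_0}=2$ and analyse $V_x\cup\{b_0\}$ and $G_y$, whereas the paper takes $\alpha_b=2$ and analyses $V_x$ and $F_b\cup F_y\cup\mxl(X)$. Up to your ``residual case'' everything you do is correct and matches the paper's argument after dualizing. The genuine gap is precisely that residual case.

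You correctly observe that when $\widehat U_{b_0}$ contains $\B$--elements it is connected (each such $z$ lies above both $y_1,y_2$), so the $b_0$--component $U_x\cup U_{b_0}\simeq\Su\widehat U_{b_0}$ is \emph{simply connected}, but possibly not homotopically trivial --- for instance $\widehat U_{b_0}$ could be a $4$--point model of $S^1$, giving $\Su\widehat U_{b_0}\simeq S^2$. You then abandon the triad $(X;V_x\cup\{b_0\},G_y)$ in favour of $(X;C_{b_0},X-C_{b_0})$, but you give no argument that the components of $X-C_{b_0}$ induce trivial maps on $\pi_1(X)$, and indeed there is no reason this should hold in general: the restrictions coming from $\Relp^{-1}(x,y)=\{b_0\}$ control only elements of $\B$, not the global loop structure of $X-C_{b_0}$.

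The fix is much simpler and is exactly what the paper does (in dual form): \emph{swap the roles of $C$ and $D$}. Recall that in a splitting triad $(X;C,D)$ the $D$--side only needs each component to induce the trivial map on $\pi_1$; simple connectivity is enough for that. So in your residual case take the triad $(X;G_y,\,V_x\cup\{b_0\})$ instead. The components of $D=V_x\cup\{b_0\}$ are isolated minimal points together with $U_x\cup U_{b_0}$, which is simply connected; hence the triad is a splitting triad. The components of $C=G_y$ are $F_y$ and isolated maximal points, all contractible, so \Spl[2] holds. With this one swap your argument becomes complete and coincides with the paper's proof.
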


\begin{proof}
As in the proof of \ref{lemma_relp_no_S1} we obtain that $\Relp^{-1}(x,y)\neq\varnothing$ for all $(x,y)\in \M$. Suppose that there exist $(x,y)\in\M$ and $b\in\B$ such that $\Relp^{-1} (x,y)=\{b\}$ and $\alpha_b=2$.

By \ref{prop_non_Hausdorff_suspension} $F_b\cup F_y$ is homotopy equivalent to $\Su(F_b\cap F_y)$. If $F_b\cap F_y$ is connected then the connected components of $F_b\cup F_y \cup \mxl(X)$ have trivial fundamental group and hence $(X;V_x,F_b\cup F_y \cup \mxl(X))$ is a splitting triad that satisfies \Spl[2], which entails a contradiction.

Thus, $F_b\cap F_y$ is not connected. Note that $\mxl(F_b\cap F_y)\subseteq \mxl(F_b) = F_b \cap \mxl(X)$. Then $F_b\cap F_y$ has, at most, two maximal points. But since $F_b\cap F_y$ is not connected, $F_b\cap F_y$ has exactly two maximal points each of which must be the maximum of its connected component. Thus $F_b\cap F_y$ is homotopy equivalent to $D_2$ and hence $F_b\cup F_y$ is homotopy equivalent to $\Su D_2$. Then, the splitting triad $(X;F_b\cup F_y \cup \mxl(X),V_x)$ satisfies \Spl[2].
\end{proof}

\begin{lemma} \label{lemma_relp_card_B_no_S2}
Let $X$ be a finite and connected T$_0$--space without beat points. Let $\B'=\{b\in \B\tq \min\{\alpha_b,\beta_b\}\geq 3\}$ and let $\B''=\B-\B'$.

Suppose that $X$ does not satisfy condition \Spl[2]. Then, with the notations above, 
$$2\#\B'(m_X-3)(n_X-3)+\#\B''(m_X-2)(n_X-2)\geq 2m_Xn_X$$
and the equality holds only if $\alpha_b=\beta_b=2$ for all $b\in\B''$.

In addition, if the equality holds and $\min\{m_X,n_X\}\geq 4$ then $\alpha_b=\beta_b=3$ for all $b\in\B'$.
\end{lemma}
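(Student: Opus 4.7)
The plan is to combine lemma \ref{lemma_relp_no_S2} with a simple double-counting of $\Relp$. For each $(x,y)\in\M$, set $n_{xy}=\#\Relp^{-1}(x,y)$ and $n'_{xy}=\#(\Relp^{-1}(x,y)\cap\B')$. By \ref{lemma_relp_no_S2}, either $n_{xy}\geq 2$, or $n_{xy}=1$ and the unique element of $\Relp^{-1}(x,y)$ lies in $\B'$ (so $n'_{xy}=1$). In both cases $n_{xy}+n'_{xy}\geq 2$.

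Summing this inequality over all $(x,y)\in\M$ and exchanging the order of summation, I would obtain
\[
\#\Relp+\sum_{b\in\B'}\#\Relp(b)\;\geq\; 2m_Xn_X,
\]
that is, $2A+B\geq 2m_Xn_X$, where $A=\sum_{b\in\B'}(m_X-\alpha_b)(n_X-\beta_b)$ and $B=\sum_{b\in\B''}(m_X-\alpha_b)(n_X-\beta_b)$. Since $X$ has no beat points, $\alpha_b,\beta_b\geq 2$ for every $b\in\B$ by \ref{rem_minimal_space_two_points}, and since $X$ does not satisfy \Spl[2], remark \ref{rem_mxl_and_cond} forces $m_X,n_X\geq 3$. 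Consequently, for $b\in\B''$ one has $(m_X-\alpha_b)(n_X-\beta_b)\leq (m_X-2)(n_X-2)$, and for $b\in\B'$ (where $\alpha_b,\beta_b\geq 3$) the stronger bound $(m_X-\alpha_b)(n_X-\beta_b)\leq (m_X-3)(n_X-3)$ holds. Substituting these into $2A+B\geq 2m_Xn_X$ yields the desired inequality.

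For the equality statement, all intermediate inequalities must be tight. The bound $B\leq \#\B''(m_X-2)(n_X-2)$ is attained only if $(m_X-\alpha_b)(n_X-\beta_b)=(m_X-2)(n_X-2)$ for each $b\in\B''$; since $m_X-2,n_X-2\geq 1$, this forces $\alpha_b=\beta_b=2$. Under the extra assumption $\min\{m_X,n_X\}\geq 4$ we also have $m_X-3,n_X-3\geq 1$, and tightness of $A\leq\#\B'(m_X-3)(n_X-3)$ analogously forces $\alpha_b=\beta_b=3$ for every $b\in\B'$.

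The substantive content is entirely packaged inside \ref{lemma_relp_no_S2}; once that is in hand, the argument reduces to bookkeeping, so I do not anticipate a serious obstacle. The one point that requires a little care is the choice of weights in the double count: assigning weight $1$ to $n_{xy}$ and an additional weight $1$ to $n'_{xy}$ is exactly what produces the asymmetric coefficients $2$ and $1$ appearing on $\#\B'$ and $\#\B''$ in the statement.
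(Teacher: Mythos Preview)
Your argument is correct and essentially identical to the paper's proof: the paper partitions $\M$ into $\M'=\{(x,y):\#\Relp^{-1}(x,y)=1\}$ and its complement, bounds $\#\Relp\geq 2mn-\#\M'$ and $\#\M'\leq\sum_{b\in\B'}\#\Relp(b)$, and then applies the same termwise bounds on $A$ and $B$. Your formulation via the single pointwise inequality $n_{xy}+n'_{xy}\geq 2$ is a slightly cleaner packaging of exactly the same double count.
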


\begin{proof}
Note that $\#\mxl(X)\geq 3$ and $\#\mnl(X)\geq 3$ by \ref{rem_mxl_and_cond}.

Let $\M'=\{(x,y)\in\M\tq \#\Relp^{-1}(x,y)=1\}$ and let $\M''=\M-\M'$. By \ref{lemma_relp_no_S2}, 
$$\#\Relp\geq \#\M'+2\#\M'' =2mn-\#\M' \geq 2mn - \sum\limits_{b\in \B'}\#\Relp(b) = 2mn-\sum\limits_{b\in \B'}(m-\alpha_b)(n-\beta_b)$$

On the other hand, $\#\Relp=\sum\limits_{b\in \B'}(m-\alpha_b)(n-\beta_b)+\sum\limits_{b\in \B''}(m-\alpha_b)(n-\beta_b)$. Thus, 

$$2mn\leq 2\sum\limits_{b\in \B'}(m-\alpha_b)(n-\beta_b)+\sum\limits_{b\in \B''}(m-\alpha_b)(n-\beta_b) \leq 2\#\B'(m-3)(n-3)+\B''(m-2)(n-2)\ .$$

The result follows.
\end{proof}

The following theorem is another major result of this article.

\begin{theo} \label{theo_16_points}
Let $X$ be a finite and connected T$_0$--space without beat points and which does not satisfy condition \Spl[2]. Then $\# X\geq 16$.
\end{theo}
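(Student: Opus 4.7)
The plan is to apply Lemma \ref{lemma_relp_card_B_no_S2} to produce a lower bound on $l_X$, and then verify $m_X+n_X+l_X\ge 16$ by a uniform estimate followed by a short finite case check. First, since $X$ does not satisfy \Spl[2], Remark \ref{rem_mxl_and_cond}(3) gives $m_X\ge 3$ and $n_X\ge 3$.

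Writing $P=(m_X-3)(n_X-3)$ and $Q=(m_X-2)(n_X-2)$, Lemma \ref{lemma_relp_card_B_no_S2} yields
\[
2\#\B'\cdot P + \#\B''\cdot Q \;\ge\; 2m_Xn_X.
\]
On the segment $\#\B'+\#\B''=l_X$ with $\#\B',\#\B''\ge 0$, the left side is a linear function bounded above by $l_X\cdot\max(2P,Q)$, so
\[
l_X \;\ge\; \frac{2m_Xn_X}{\max(2P,Q)}.
\]
A short calculation shows $2m_Xn_X>\max(2P,Q)$ for every $m_X,n_X\ge 3$: the inequality $2m_Xn_X>(m_X-2)(n_X-2)$ is immediate, and $2m_Xn_X>2(m_X-3)(n_X-3)$ reduces to $m_X+n_X>3$. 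Together with the integrality of $l_X$ this forces $l_X\ge 2$, hence $\#X\ge m_X+n_X+2$, which already gives $\#X\ge 16$ whenever $m_X+n_X\ge 14$.

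It remains to handle the finitely many pairs $(m_X,n_X)$ with $3\le m_X\le n_X$ and $m_X+n_X\le 13$; by the first inequality $m_X\in\{3,4,5,6\}$ and $n_X\le 13-m_X$. For each such pair one evaluates $\lceil 2m_Xn_X/\max(2P,Q)\rceil$ and checks $\#X\ge 16$ directly. I expect the tight cases to be $(m_X,n_X)\in\{(4,4),(4,5),(4,6),(5,5),(5,6),(6,6)\}$ together with their transposes, where the bound is saturated at $\#X=16$; every other pair should give $\#X\ge 17$.

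The main obstacle is that the \emph{continuous} inequality $m+n+2mn/\max(2P,Q)\ge 16$ is not actually true in this range, dipping to roughly $15.6$ near $m=n=5$; what saves the argument is the integrality of $l_X$ together with the discrete constraints $m_X,n_X\ge 3$. Thus the delicate part of the proof is not the analytical estimate but the careful bookkeeping of the finitely many boundary cases.
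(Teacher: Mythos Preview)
Your approach coincides with the paper's through the decisive step: both invoke Lemma~\ref{lemma_relp_card_B_no_S2} and pass to the bound $l_X\geq 2m_Xn_X/\max(2P,Q)$. The divergence is only in how to finish. You propose a finite case check over pairs $(m_X,n_X)$ with $m_X+n_X\leq 13$, but you do not actually carry it out---phrases like ``I expect'' and ``should give'' leave the argument incomplete as written (the check does succeed if executed, so this is a gap of presentation rather than of strategy).

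More to the point, your ``main obstacle'' paragraph rests on a misreading of what is needed. You note that the continuous quantity $m+n+2mn/\max(2P,Q)$ can fall below $16$ (to about $15.6$ near $m=n=5$), but it never falls to $15$ or below. The paper exploits exactly this: after splitting into the two cases $\max=2P$ and $\max=Q$, it substitutes $a=m-3,\,b=n-3$ (respectively $a=m-2,\,b=n-2$), regroups the resulting expression into three pieces, and applies the AM--GM inequality to each, obtaining lower bounds $2+2+3\sqrt[3]{4}+7>15$ and $2\sqrt{2}+2\sqrt{2}+3\sqrt[3]{2}+6>15$ respectively. Since $\#X$ is an integer, $\#X\geq 16$ follows at once with no case enumeration. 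Your route is valid, but the paper's AM--GM finish is both shorter and complete; the integrality that ``saves'' the argument is that of $\#X$, not of $l_X$, and it does so uniformly rather than case by case.
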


\begin{proof}
As before, $\#\mxl(X)\geq 3$ and $\#\mnl(X)\geq 3$ by \ref{rem_mxl_and_cond}. Now, by \ref{lemma_relp_card_B_no_S2}, 
$$2mn\leq 2\#\B'(m-3)(n-3)+\#\B''(m-2)(n-2)\leq \#\B . \max\{2(m-3)(n-3),(m-2)(n-2)\}.$$

If $\max\{2(m-3)(n-3),(m-2)(n-2)\}=2(m-3)(n-3)$, let $a=m-3$ and $b=n-3$. Then
\begin{displaymath}
\begin{array}{rcl}
\#X & = & \displaystyle m+n+\#\B \geq m+n+ \frac{mn}{(m-3)(n-3)} = a+b+\frac{3}{a}+\frac{3}{b}+\frac{9}{ab}+7= \\
& = & \displaystyle \left(\frac{a}{3}+\frac{3}{a}\right)+\left(\frac{b}{3}+\frac{3}{b}\right)+\left(\frac{9}{ab}+\frac{2a}{3}+\frac{2b}{3}\right)+7 \geq 2+2+3\sqrt[3]{4}+7 > 15
\end{array}
\end{displaymath}
where the penultimate inequality holds by the inequality of arithmetic and geometric means.

If $\max\{2(m-3)(n-3),(m-2)(n-2)\}=(m-2)(n-2)$, let $a=m-2$ and $b=n-2$. Then
\begin{displaymath}
\begin{array}{rcl}
\#X & = & \displaystyle m+n+\#\B \geq m+n+ \frac{2mn}{(m-2)(n-2)} = a+b+\frac{4}{a}+\frac{4}{b}+\frac{8}{ab}+6= \\
& = & \displaystyle \left(\frac{a}{2}+\frac{4}{a}\right)+\left(\frac{b}{2}+\frac{4}{b}\right)+\left(\frac{8}{ab}+\frac{a}{2}+\frac{b}{2}\right)+6 \geq 2\sqrt{2}+2\sqrt{2}+3\sqrt[3]{2}+6 > 15
\end{array}
\end{displaymath}
where, as before, the penultimate inequality holds by the inequality of arithmetic and geometric means.

Therefore, $\#X\geq 16$.
\end{proof}

As a corollary of the previous theorem we obtain an affirmative answer to the question posed in \cite[p.44]{BarLN} about the minimality of $\Su S^0 \times \Su S^0$ as a finite model of the torus.

\begin{coro} \label{coro_finite_model_of_torus}
Let $X$ be a finite T$_0$--space. If $X$ is a finite model of the torus then $\# X\geq 16$.
\end{coro}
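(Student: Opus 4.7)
The plan is to deduce the corollary directly from Theorem \ref{theo_16_points} together with Corollary \ref{coro_torus_Klein_bottle_not_S2}, after a routine reduction to the beat-point-free case. Since the torus is path-connected, any finite model $X$ of it is connected. By Stong's theorem, successively removing beat points from $X$ produces a finite T$_0$-space $X_0$ without beat points which is a strong deformation retract of $X$; in particular $X_0$ is weak homotopy equivalent to $X$ (hence to the torus), is connected, and has $\#X_0 \leq \#X$. So it suffices to show $\#X_0 \geq 16$.

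Next, I would invoke Corollary \ref{coro_torus_Klein_bottle_not_S2} applied to $X_0$: because $X_0$ is a finite model of the torus, $X_0$ does not satisfy condition \Spl[2]. (As a cross-check, this is consistent with Proposition \ref{prop_S123_beat_points}, whose contrapositive guarantees that removing beat points preserves the failure of \Spl[2].) At this point $X_0$ meets every hypothesis of Theorem \ref{theo_16_points}: it is a finite, connected T$_0$-space without beat points which does not satisfy \Spl[2]. That theorem then gives $\#X_0 \geq 16$, and combining with $\#X \geq \#X_0$ finishes the proof.

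No real obstacle is anticipated: the corollary is essentially a packaging of the two deeper results cited above, and the only delicate point is the small bookkeeping step of passing to a beat-point-free model while preserving both the weak homotopy type and the negation of \Spl[2].
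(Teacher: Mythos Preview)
Your proposal is correct and follows essentially the same approach as the paper: reduce to the beat-point-free case, apply Corollary~\ref{coro_torus_Klein_bottle_not_S2} to see that the space does not satisfy \Spl[2], and conclude via Theorem~\ref{theo_16_points}. The paper's version is terser (it simply says ``without loss of generality we may suppose that $X$ does not have beat points''), but the substance is identical.
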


\begin{proof}
Let $X$ be a finite model of the torus. Without loss of generality we may suppose that $X$ does not have beat points. From \ref{coro_torus_Klein_bottle_not_S2} we obtain that $X$ does not satisfy \Spl[2]. Therefore, $\# X\geq 16$ by \ref{theo_16_points}.
\end{proof}

\begin{coro} \label{coro_finite_model_of_Klein_bottle}
Let $X$ be a finite T$_0$--space. If $X$ is a finite model of the Klein bottle then $\# X\geq 16$.
\end{coro}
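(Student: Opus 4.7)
The proof should mirror the one just given for the torus, since both hypotheses feed into the same machinery. The plan is to reduce to a space without beat points, invoke Corollary \ref{coro_torus_Klein_bottle_not_S2} to rule out property \Spl[2], and then apply Theorem \ref{theo_16_points}.

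More precisely, let $X$ be a finite T$_0$-space weakly equivalent to the Klein bottle. By Stong's procedure of removing beat points one at a time, we may replace $X$ by a homotopy equivalent (hence weakly equivalent, hence still a finite model of $\Kl$) subspace with no beat points, without increasing the cardinality; so it suffices to bound $\#X$ under the extra assumption that $X$ has no beat points. Corollary \ref{coro_torus_Klein_bottle_not_S2} applies to finite models of both the torus and the Klein bottle and guarantees that such an $X$ does not satisfy \Spl[2]. The hypotheses of Theorem \ref{theo_16_points} are therefore met, and we conclude $\#X \geq 16$.

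There is essentially no obstacle: the heavy lifting is already packaged into \ref{coro_torus_Klein_bottle_not_S2} (which relied on Proposition \ref{prop_S2_KG1} via the fact that $\Kl$ is a $K(\pi,1)$ with non-free-abelian $H_1$, so \Spl[2] would force $H_1$ to be free abelian, a contradiction) and into \ref{theo_16_points} (the combinatorial/counting bound via $\mathcal{R}_X$). The only thing to verify explicitly is the reduction to the beat-point-free case, which is standard.
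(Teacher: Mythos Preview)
Your proof is correct and follows essentially the same approach as the paper's: invoke Corollary \ref{coro_torus_Klein_bottle_not_S2} to deny \Spl[2] and then apply Theorem \ref{theo_16_points}. You are slightly more explicit about the reduction to the beat-point-free case (needed for the hypotheses of \ref{theo_16_points}), which the paper leaves implicit here, having just spelled it out in the immediately preceding torus corollary.
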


\begin{proof}
By \ref{coro_torus_Klein_bottle_not_S2}, $X$ does not satisfy \Spl[2]. The result follows from \ref{theo_16_points}.
\end{proof}

Now we will find all the minimal finite models of the torus and the Klein bottle.

Let $X$ be a finite and connected T$_0$--space without beat points which does not satisfy \Spl[2]. For each $a\in\mxl(X)$ we define 
$$\sigma_a=\sum_{b\in\Relm^{-1}(a)}\frac{1}{\#\Relm(b)}.$$

\begin{rem} \label{rem_relm_geq_2}
Note that from \ref{lemma_relp_no_S2} it follows that if $X$ is a finite and connected T$_0$--space without beat points which does not satisfy \Spl[2] then $\#\Relm^{-1}(a)\geq 2$ for all $a\in\mxl(X)$.
\end{rem}

\begin{lemma} \label{lemma_card_B-Ua=2}
Let $X$ be a finite and connected T$_0$--space without beat points which does not satisfy \Spl[2]. Let $a\in\mxl(X)$. If $\Relm^{-1}(a)=\{b_1,b_2\}$ (with $b_1\neq b_2$) then $\{b_1,b_2\}$ is an antichain, $\#(F_{b_1}\cap F_{b_2}\cap\mxl(X))\geq 3$, $U_{b_1}\cap U_{b_2}=\varnothing$ and $\min\{\beta_{b_1},\beta_{b_2}\}\geq 3$.
\end{lemma}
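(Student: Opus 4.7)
Proof plan: Each of the four conclusions is proved by contradiction: assume it fails and exhibit a splitting triad $(X;C,D)$ witnessing \Spl[2], contradicting the hypothesis. The setup $\Relm^{-1}(a)=\{b_1,b_2\}$ unpacks as $X-W_a^X=\{b_1,b_2\}$, so $X$ partitions as $V_a^X\cup\{b_1,b_2\}\cup(\mxl(X)-\{a\})$. The recurring tool is that $V_a^X=U_a\cup\mnl(X)$ has all contractible components (the $U_a$-component is contractible via its maximum $a$, and each minimal point outside $U_a$ is isolated), and dually $F_z\cup\mxl(X)$ has all contractible components. In a triad of the form $(X;C,V_a^X)$ or $(X;V_a^X,D)$ this guarantees respectively that the triad is a splitting triad, or that the $C$-side contributes no $S^1$ component to \Spl[2].

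This yields (3) at once: when $z\in U_{b_1}\cap U_{b_2}$, one has $b_1,b_2\in F_z$, so $(X;V_a^X,F_z\cup\mxl(X))$ is a splitting triad and both sides have contractible components. For (1), suppose after relabelling $b_1<b_2$; in $(X;X-V_a^X,V_a^X)$ the only non-singleton component of $X-V_a^X$ is $\{b_1,b_2\}\cup(F_{b_1}\cap\mxl(X))$, which has minimum $b_1$, hence is contractible. For (2), assume $k:=\#(F_{b_1}\cap F_{b_2}\cap\mxl(X))\leq 2$ and use the same triad: the maximal points above exactly one of $b_1,b_2$ are down beat points of $X-V_a^X$, and collapsing them reduces the relevant component(s) to the bipartite subposet on $\{b_1,b_2\}\cup(F_{b_1}\cap F_{b_2}\cap\mxl(X))$, which is contractible for $k\in\{0,1\}$ and weakly equivalent to $S^1$ for $k=2$; thus $X-V_a^X$ has at most one $S^1$ component, witnessing \Spl[2].

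For (4), assume $\beta_{b_1}=2$ and write $U_{b_1}\cap\mnl(X)=\{z_1,z_2\}$; by (3), $z_1,z_2\notin U_{b_2}$. A preliminary observation is that $\widehat U_{b_1}\cap\B\subseteq U_a$: any $c\in\widehat U_{b_1}\cap\B$ satisfies $c<b_1$, so $c\notin\{b_1,b_2\}$ by (1), and since $\B-U_a=\{b_1,b_2\}$ we get $c\in U_a$. The plan is to use the triad $(X;V_a^X\cup\{b_1\},\{b_2\}\cup(\mxl(X)-\{a\}))$: the $D$-side is the star $\{b_2\}\cup(F_{b_2}\cap\mxl(X))$ plus isolated maximal points, hence has contractible components. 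For the $C$-side, when $z_1,z_2\in U_a$ one has $U_a\cup U_{b_1}=U_a\cup\{b_1\}$ and Proposition \ref{prop_non_Hausdorff_suspension} gives $U_a\cup U_{b_1}\simeq\Su(U_a\cap U_{b_1})=\Su(\widehat U_{b_1})$; since every $c\in\widehat U_{b_1}\cap\B$ satisfies $U_c\cap\mnl(X)=\{z_1,z_2\}$, either $\widehat U_{b_1}=\{z_1,z_2\}$ (so the component is weakly equivalent to $S^1$) or $\widehat U_{b_1}$ is connected (so the component is contractible). The sub-cases where $z_1$ or $z_2$ lies outside $U_a$ are handled by tracking how those isolated minimal points of $V_a^X$ merge into the $U_a$-component through $b_1$; a parallel suspension analysis again yields at most one $S^1$ component of $C$. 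In every case \Spl[2] is witnessed, so $\beta_{b_1}\geq 3$, and $\beta_{b_2}\geq 3$ follows by symmetry. The main obstacle is precisely this sub-case analysis in (4): controlling the homotopy type of the $C$-side uniformly as $z_1,z_2$ move in and out of $U_a$ and as $\widehat U_{b_1}$ acquires non-minimal elements.
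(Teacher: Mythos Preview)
Your arguments for (1) and (2) match the paper's: both use the triad $(X;X-V_a^X,V_a^X)$ and analyze the component $F_{b_1}\cup F_{b_2}$, the paper via $F_{b_1}\cup F_{b_2}\simeq\Su(F_{b_1}\cap F_{b_2})$ (Proposition~\ref{prop_non_Hausdorff_suspension}) and you via an equivalent beat-point collapse.

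For (3) and (4), your approach is correct but considerably more laborious than the paper's. The paper dispatches both in two lines each by invoking the already-proved Lemma~\ref{lemma_relp_no_S2}. For (3): if $w\in U_{b_1}\cap U_{b_2}$ then $\Relp^{-1}(a,w)=\varnothing$, contradicting that lemma. For (4): pick any $w_1\in U_{b_1}\cap\mnl(X)$; by (3) we have $b_2\notin F_{w_1}$, so $\Relp^{-1}(a,w_1)=\{b_2\}$, and Lemma~\ref{lemma_relp_no_S2} forces $\min\{\alpha_{b_2},\beta_{b_2}\}\geq 3$, hence $\beta_{b_2}\geq 3$. No new triad needs to be built, and the ``main obstacle'' you flag---the sub-case analysis over whether $z_1,z_2\in U_a$ and whether $\widehat U_{b_1}\cap\B$ is empty---simply does not arise. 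Your construction $(X;V_a^X\cup\{b_1\},\{b_2\}\cup(\mxl(X)-\{a\}))$ does work (I checked: when some $z_i\notin U_a$ one still has $U_a\cup\{b_1,z_1,z_2\}=U_a\cup U_{b_1}$ so Proposition~\ref{prop_non_Hausdorff_suspension} applies, and $U_a\cap U_{b_1}=\widehat U_{b_1}\cap U_a$ is either a point, connected, or empty in the various sub-cases), but it reinvents inside this lemma what Lemma~\ref{lemma_relp_no_S2} already packages. The payoff of the paper's route is that the same lemma is reused throughout Section~\ref{section_torus}; your self-contained approach would force you to repeat similar ad hoc constructions later.
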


\begin{proof}
Suppose that $\Relm^{-1}(a)=\{b_1,b_2\}$. If either $F_{b_1}\cap F_{b_2}=\varnothing$ or $b_1\leq b_2$ or $b_2\leq b_1$ then the connected components of $X-V_a^X$ are contractible and hence the splitting triad $(X;X-V_a^X,V_a^X)$ satisfies \Spl[2]. Thus, $F_{b_1}\cap F_{b_2}\neq\varnothing$ and $\{b_1,b_2\}$ is an antichain. Hence, the connected components of $X-V_a^X$ are $F_{b_1}\cup F_{b_2}$ or singletons.

Note that $F_{b_1}\cap F_{b_2}\subseteq \mxl(X)$. Hence $F_{b_1}\cap F_{b_2}$ is discrete. And since $F_{b_1}\cup F_{b_2}$ is homotopy equivalent to $\Su (F_{b_1}\cap F_{b_2})$ (by \ref{prop_non_Hausdorff_suspension}) we obtain that if $\#(F_{b_1}\cap F_{b_2})\leq 2$ then the splitting triad $(X;X-V_a^X,V_a^X)$ satisfies \Spl[2]. Therefore, $\#(F_{b_1}\cap F_{b_2}\cap\mxl(X))\geq 3$.

Now, if $w\in U_{b_1}\cap U_{b_2}$ then $\Relp^{-1}(a,w)=\varnothing$ contradicting \ref{lemma_relp_no_S2}. Hence, $U_{b_1}\cap U_{b_2}=\varnothing$. Let $w_1\in U_{b_1}\cap \mnl(X)$. Then $b_2 \notin F_{w_1}$ and hence $\Relp^{-1}(a,w_1)=\{b_2\}$. Thus, $\beta_{b_2}\geq 3$ by \ref{lemma_relp_no_S2}. In a similar way we obtain that $\beta_{b_1}\geq 3$.
\end{proof}

\begin{lemma} \label{lemma_sigma_a}
Let $X$ be a finite and connected T$_0$--space without beat points which does not satisfy \Spl[2] and such that $m_X\geq 4$. Let $a\in\mxl(X)$. Then $\sigma_a\geq\min\left\{\frac{2}{m_X-3},\frac{3}{m_X-2}\right\}$.

If, in addition, $n_X\leq 5$ then $\sigma_a\geq \frac{2}{m_X-3}$.
\end{lemma}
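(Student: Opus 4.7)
The plan is to bound $\sigma_a$ by a case analysis on the cardinality of $\Relm^{-1}(a)$, using the two universal bounds available: since $X$ has no beat points, $\alpha_b\geq 2$ for every $b\in\B$, so $\frac{1}{\#\Relm(b)}=\frac{1}{m_X-\alpha_b}\geq\frac{1}{m_X-2}$; and by Lemma \ref{lemma_card_B-Ua=2}, in the borderline case $\#\Relm^{-1}(a)=2$ the two elements satisfy $\alpha_{b_i}\geq 3$, so each term is $\geq\frac{1}{m_X-3}$. By \ref{rem_relm_geq_2} we always have $\#\Relm^{-1}(a)\geq 2$, so only these two regimes can occur.

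For the first inequality I would split into two cases. If $\#\Relm^{-1}(a)=2$, write $\Relm^{-1}(a)=\{b_1,b_2\}$ and apply \ref{lemma_card_B-Ua=2} to get $\alpha_{b_1},\alpha_{b_2}\geq 3$, yielding
\[
\sigma_a=\frac{1}{m_X-\alpha_{b_1}}+\frac{1}{m_X-\alpha_{b_2}}\geq \frac{2}{m_X-3}.
\]
If instead $\#\Relm^{-1}(a)\geq 3$, then using only $\alpha_b\geq 2$ we obtain
\[
\sigma_a\geq \#\Relm^{-1}(a)\cdot\frac{1}{m_X-2}\geq \frac{3}{m_X-2}.
\]
In either case $\sigma_a\geq\min\bigl\{\tfrac{2}{m_X-3},\tfrac{3}{m_X-2}\bigr\}$, proving the first assertion.

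For the additional statement under $n_X\leq 5$, I first observe that when $m_X\geq 5$ we have $\frac{3}{m_X-2}\geq\frac{2}{m_X-3}$ (equivalent to $3(m_X-3)\geq 2(m_X-2)$, i.e.\ $m_X\geq 5$), so the general bound already gives $\sigma_a\geq\frac{2}{m_X-3}$. The remaining work is the case $m_X=4$, where the target becomes $\sigma_a\geq 2$. The 2-element subcase still gives $\sigma_a\geq\frac{2}{4-3}=2$, and if $\#\Relm^{-1}(a)\geq 4$ then $\sigma_a\geq\frac{4}{4-2}=2$, so the only issue is $\#\Relm^{-1}(a)=\{b_1,b_2,b_3\}$ with some $\alpha_{b_i}=2$. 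Here I would argue: if at least one $\alpha_{b_i}=3$ then $\sigma_a\geq 1+\tfrac12+\tfrac12=2$ directly, so the delicate situation is when $\alpha_{b_1}=\alpha_{b_2}=\alpha_{b_3}=2$. In that situation Lemma \ref{lemma_relp_no_S2} applied to each pair $(a,y)$ with $y\in\mnl(X)$ forces $\#\Relp^{-1}(a,y)\geq 2$ (the singleton alternative is excluded because it would require $\alpha_{b_i}\geq 3$). Summing over $y\in\mnl(X)$ and noting that $\Relp^{-1}(a,y)\subseteq\{b_1,b_2,b_3\}$,
\[
2n_X\leq \sum_{y\in\mnl(X)}\#\Relp^{-1}(a,y)=\sum_{i=1}^{3}(n_X-\beta_{b_i}),
\]
whence $\sum_i\beta_{b_i}\leq n_X$; combined with $\beta_{b_i}\geq 2$ (no beat points) this forces $n_X\geq 6$, contradicting $n_X\leq 5$. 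The main obstacle here is realizing that \ref{lemma_relp_no_S2} can be used to produce the crucial counting bound on the $y$-slices when the $\alpha_{b_i}$ are all equal to $2$; once that is seen, the inequalities assemble cleanly.
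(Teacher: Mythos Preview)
Your proof is correct and the first assertion is handled exactly as in the paper. For the additional statement under $n_X\leq 5$, your argument is also correct but organized a bit differently from the paper's. You first reduce to $m_X=4$ via the elementary inequality $\frac{3}{m_X-2}\geq\frac{2}{m_X-3}$ for $m_X\geq 5$, and then in the $3$--element subcase with all $\alpha_{b_i}=2$ you derive $\sum_i\beta_{b_i}\leq n_X$ from Lemma~\ref{lemma_relp_no_S2}, contradicting $n_X\leq 5$. The paper instead treats the $3$--element subcase uniformly for all $m_X\geq 4$: since $\beta_{b_i}\geq 2$ and $n_X\leq 5$, a pigeonhole gives a minimal $y$ lying below two of the $b_i$, so $\Relp^{-1}(a,y)$ is a singleton and Lemma~\ref{lemma_relp_no_S2} forces the remaining $b_3$ to have $\alpha_{b_3}\geq 3$, whence $\sigma_a\geq\frac{2}{m_X-2}+\frac{1}{m_X-3}\geq\frac{2}{m_X-3}$. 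The two arguments are really the same counting observation read in opposite directions; the paper's version avoids the preliminary reduction to $m_X=4$ and the extra case split on whether some $\alpha_{b_i}=3$, while yours makes the contradiction on $n_X$ more explicit.
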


\begin{proof}
By \ref{rem_relm_geq_2}, $\#\Relm^{-1}(a)\geq 2$. If $\#\Relm^{-1}(a)\geq 3$ then 
$$\sigma_a=\sum_{b\in\Relm^{-1}(a)}\frac{1}{\#\Relm(b)}\geq \frac{3}{m_X-2}$$
by \ref{rem_minimal_space_two_points}. Suppose, in addition, that $n_X\leq 5$. If $\#\Relm^{-1}(a)\geq 4$ then $\sigma_a\geq \frac{4}{m_X-2} \geq \frac{2}{m_X-3}$ since $m_X\geq 4$. Hence, we may assume that $\#\Relm^{-1}(a)=3$. Since $n_X\leq 5$, from \ref{rem_minimal_space_two_points} we obtain that there exist distinct points $b_1,b_2\in \Relm^{-1}(a)$ and $y\in\mnl(X)$ such that $b_1,b_2\in F_y$. Let $b_3 \in \Relm^{-1}(a)-\{b_1,b_2\}$. From \ref{lemma_relp_no_S2} it follows that $\Relp^{-1}(a,y)=\{b_3\}$ and $\alpha_{b_3}\geq 3$. Thus, 
$$\sigma_a=\sum_{b\in\Relm^{-1}(a)}\frac{1}{\#\Relm(b)}\geq \frac{2}{m_X-2}+\frac{1}{m_X-3}\geq \frac{2}{m_X-3}$$
since $m_X\geq 4$.

Suppose now that $\#\Relm^{-1}(a)=2$ and let $b_1,b_2\in \B$ such that $\Relm^{-1}(a)=\{b_1,b_2\}$. Thus, $\#(F_{b_1}\cap F_{b_2})\geq 3$ by \ref{lemma_card_B-Ua=2} and hence $\alpha_{b_j}\geq 3$ for $j\in\{1,2\}$. Thus,
$$\sigma_a=\sum_{b\in\Relm^{-1}(a)}\frac{1}{\#\Relm(b)}\geq \frac{2}{m_X-3}.$$
\end{proof}

\begin{lemma} \label{lemma_relm_card_B_no_S2}
Let $X$ be a finite and connected T$_0$--space without beat points which does not satisfy condition \Spl[2] and such that $m_X\geq 4$. Then 
$$\# \B \geq m_X . \min\left\{\tfrac{2}{m_X-3}\;,\;\tfrac{3}{m_X-2}\right\}= \left\{ 
\begin{array}{cr}
\frac{2m_X}{m_X-3} & \textnormal{if $m_X\geq 5$} \\ 
\frac{3m_X}{m_X-2} & \textnormal{if $m_X\leq 5$} 
\end{array}
\right. \ .$$
If, in addition, $n_X\leq 5$ then $\#\B \geq \frac{2m_X}{m_X-3}$.
\end{lemma}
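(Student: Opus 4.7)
The plan is to combine Lemma \ref{lemma_sigma_a} with a straightforward double-counting identity for the $\sigma_a$. Specifically, note that the double sum $\sum_{a \in \mxl(X)} \sigma_a$ can be rewritten by interchanging the order of summation: every $b \in \B$ with $\Relm(b) \neq \varnothing$ contributes the term $\sum_{a \in \Relm(b)} 1/\#\Relm(b) = 1$, while any $b \in \B$ with $\Relm(b) = \varnothing$ simply does not appear. Hence
\[
\sum_{a \in \mxl(X)} \sigma_a \;=\; \#\{b \in \B : \Relm(b) \neq \varnothing\} \;\leq\; \#\B.
\]

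Next, I would invoke Lemma \ref{lemma_sigma_a}: since $X$ has no beat points, does not satisfy \Spl[2], and $m_X \geq 4$, we have $\sigma_a \geq \min\{2/(m_X-3),\,3/(m_X-2)\}$ for every $a \in \mxl(X)$. Summing over the $m_X$ maximal elements and chaining with the inequality above yields
\[
\#\B \;\geq\; \sum_{a \in \mxl(X)} \sigma_a \;\geq\; m_X \cdot \min\left\{\tfrac{2}{m_X-3},\,\tfrac{3}{m_X-2}\right\},
\]
which is the first stated bound. To unpack the minimum, observe that $\frac{2}{m-3} \leq \frac{3}{m-2}$ is equivalent to $2(m-2) \leq 3(m-3)$, i.e.\ $m \geq 5$. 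So for $m_X \geq 5$ the minimum equals $\frac{2}{m_X-3}$ (giving $\#\B \geq \frac{2m_X}{m_X-3}$), while for $m_X = 4$ the minimum equals $\frac{3}{m_X-2}$ (giving $\#\B \geq \frac{3m_X}{m_X-2}$); at $m_X = 5$ both values coincide, which is why both cases are recorded.

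For the addendum, if in addition $n_X \leq 5$, the second part of Lemma \ref{lemma_sigma_a} gives the stronger uniform bound $\sigma_a \geq 2/(m_X-3)$ for every $a \in \mxl(X)$, and the same summation delivers $\#\B \geq \frac{2m_X}{m_X-3}$ in this case as well. The entire argument is essentially mechanical once Lemma \ref{lemma_sigma_a} and the double-counting identity are in place; there is no serious obstacle, since all the combinatorial work (handling the cases $\#\Relm^{-1}(a)=2$ versus $\geq 3$, and invoking Lemmas \ref{lemma_relp_no_S2} and \ref{lemma_card_B-Ua=2}) was already absorbed into the proof of \ref{lemma_sigma_a}.
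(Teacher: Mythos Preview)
Your proposal is correct and follows essentially the same approach as the paper: both use the double-counting identity $\sum_{a\in\mxl(X)}\sigma_a = \#\{b\in\B:\Relm(b)\neq\varnothing\}\leq \#\B$ and then apply Lemma~\ref{lemma_sigma_a} termwise. Your exposition is in fact slightly more detailed, spelling out the comparison $\frac{2}{m-3}\leq\frac{3}{m-2}\iff m\geq 5$ that the paper leaves implicit.
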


\begin{proof} 
Note that
\begin{displaymath}
\begin{array}{rcl}
\#\B & \geq & \displaystyle \#\Relm^{-1}(\mxl(X)) = \sum_{b\in \Relm^{-1}(\mxl(X))}\left(\sum_{a\in \Relm(b)} \frac{1}{\# \Relm(b)}\right)= \sum_{(a,b)\in \Relm} \frac{1}{\# \Relm(b)} = \\ & = & \displaystyle \sum\limits_{a\in\mxl(X)}\left( \sum\limits_{b\in \Relm^{-1}(a)} \frac{1}{\# \Relm(b)} \right) = \sum\limits_{a\in\mxl(X)} \sigma_a \ .
\end{array}
\end{displaymath}
The result now follows from \ref{lemma_sigma_a}.
\end{proof}

The following theorem describes all the minimal finite models of the torus and the Klein bottle.

\begin{theo}\label{theo_torus_Klein}
Let $X$ be a finite T$_0$--space such that $\#X=16$. 
\begin{enumerate}
\item If $X$ is a finite model of the torus then $X$ is homeomorphic to either $\T_{0,0}$ or $\T_{1,1}$.
\item If $X$ is a finite model of the Klein bottle then $X$ is homeomorphic to either $\Kl_{1,0}$ or $\Kl_{0,1}$.
\end{enumerate}
\end{theo}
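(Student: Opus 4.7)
The plan is to extract progressively finer structural information about the $16$-point model and then enumerate. First, I may assume $X$ has no beat points: if it did, Stong's removal would yield a $15$-point weak equivalent model, contradicting \ref{coro_finite_model_of_torus} or \ref{coro_finite_model_of_Klein_bottle}. By \ref{coro_torus_Klein_bottle_not_S2}, $X$ does not satisfy \Spl[2], so every lemma of this section applies to $X$.

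The first goal is to pin down $(m_X, n_X, \#\B) = (4,4,8)$. The inequality $\#\B \cdot \max\{2(m_X{-}3)(n_X{-}3),\,(m_X{-}2)(n_X{-}2)\} \geq 2 m_X n_X$ established inside the proof of \ref{theo_16_points}, combined with $m_X{+}n_X{+}\#\B = 16$ and $m_X, n_X \geq 3$ from \ref{rem_mxl_and_cond}, leaves only the candidate triples $(4,4,8), (4,5,7), (4,6,6), (5,5,6), (5,6,5)$ and $(6,6,4)$ up to swapping $m_X \leftrightarrow n_X$. For each non-$(4,4)$ candidate I would pass to the equality case of \ref{lemma_relp_card_B_no_S2}, which rigidly forces the multiset $\{(\alpha_b,\beta_b)\}_{b\in\B}$, and combine this with \ref{lemma_sigma_a}, \ref{lemma_card_B-Ua=2} and the positivity of $\#\Relp^{-1}(x,y)$ coming from \ref{lemma_relp_no_S2} to derive a contradiction. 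For instance, $(6,6,4)$ forces $\alpha_b=\beta_b=3$ for every $b$; \ref{lemma_card_B-Ua=2} then pins $\mxl$ (and symmetrically $\mnl$) into a disjoint $3{+}3$ partition, and the only admissible matching of $\B$ to the two sides either leaves some $\Relp$-fiber empty (violating \ref{lemma_relp_no_S2}) or produces an order complex whose Euler characteristic or fundamental group is incompatible with $T^2$ or $\Kl$. I expect this case-by-case elimination, being bespoke in each numerical configuration rather than a single uniform argument, to be the main technical obstacle.

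Once $(m_X, n_X, \#\B) = (4,4,8)$ is established, the equalities in \ref{lemma_relp_card_B_no_S2} and \ref{lemma_relp_no_S2} force $\alpha_b = \beta_b = 2$ for every $b \in \B$ and $\#\Relp^{-1}(x,y) = 2$ for every $(x,y) \in \M$. A size argument then rules out any relation $b < b'$ inside $\B$: since $F_{b'} \cap \mxl \subseteq F_b \cap \mxl$ and both sides have size $2$ one would get $A_b = A_{b'}$ and similarly $B_b = B_{b'}$, but then the explicit count of $\Relp$-fibers cannot equal the uniform value $2$ on all of $\M$. Thus $\B$ is an antichain, $h(X) = 2$, and one further checks that every minimal lies below every maximal in $X$.

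The remaining task is combinatorial. The data of $X$ is a pair of maps $A:\B \to \binom{\mxl}{2}$, $B:\B \to \binom{\mnl}{2}$. Counting incidences $\sum_{b\in\B} \mathbf{1}_{\{x \in A_b\}} = |U_x \cap \B| = 4$ for each $x \in \mxl$ (and symmetrically for $B$) shows that on each side the multiset of $2$-subsets is a union of the three complementary pair-pairs of $\{1,2,3,4\}$ with multiplicities $(a,a,b,b,c,c)$ satisfying $a+b+c=4$. Modulo the natural $S_4 \times S_4 \times \Z/2$ symmetry (relabel $\mxl$, relabel $\mnl$, pass to the opposite poset), and imposing the coupling $\#\Relp^{-1}(x,y) = 2$, the number of inequivalent posets is short and explicit. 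For each survivor I would compute $\pi_1(X)$ using the functorial construction of Section 2 applied to a maximal tree on the height-$1$ subposets $\mnl \cup \B$ and $\B \cup \mxl$, and $H_*(X)$ via \ref{coro_quasicel_relativo}. Exactly four candidates have the correct weak homotopy type; direct inspection identifies them with the face posets of the regular CW-structures of Figures \ref{fig_CW_Q00}, \ref{fig_CW_Q11}, \ref{fig_CW_Q10} and \ref{fig_CW_Q01}, that is, with $\T_{0,0}, \T_{1,1}, \Kl_{1,0}$ and $\Kl_{0,1}$.
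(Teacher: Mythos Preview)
Your overall architecture matches the paper's: reduce to no beat points, use failure of \Spl[2] plus the counting lemmas to pin down $(m_X,n_X,\#\B)$, and then analyse the bipartite incidence data between $\B$ and $\mxl(X)$, $\mnl(X)$. Two comments are in order.

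\textbf{The case elimination is the proof, not a preliminary.} You correctly list the surviving triples $(4,5,7),(4,6,6),(5,5,6),(5,6,5),(6,6,4)$, but saying you ``would pass to the equality case of \ref{lemma_relp_card_B_no_S2}'' does not dispose of them. For $(4,5,7)$ and $(4,6,6)$ the paper instead invokes \ref{lemma_relm_card_B_no_S2} (built on \ref{lemma_sigma_a}) to force $\#\B\geq 8$; for $(5,6,5)$ and $(6,6,4)$ the equality analysis of $\sigma_a$ determines the entire shape of $X-\mnl(X)$ and a concrete splitting triad is then exhibited; and $(5,5,6)$ is genuinely delicate, requiring a careful bookkeeping of $\B'$ versus $\B''$, the set $\A=\{x:\#(U_x\cap\B)=3\}$, and a final explicit \Spl[2] triad. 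None of this follows from the equality clause of \ref{lemma_relp_card_B_no_S2} alone, and your example sketch for $(6,6,4)$ does not reach a contradiction (the paper gets it by producing an explicit \Spl[2] triad, not from Euler characteristic or $\pi_1$).

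\textbf{In the $(4,4,8)$ case, two steps are not quite right and one is methodologically different.} First, your antichain argument is incomplete: from $b<b'$ you correctly deduce $A_b=A_{b'}$ and $B_b=B_{b'}$, but this by itself is compatible with $\#\Relp^{-1}\equiv 2$ (those two could be the entire fiber). The paper uses that $b$ is not an up beat point to produce a third $b_3>b$ in $\B$ with $A_{b_3}=A_b$, giving $\{b,b',b_3\}\subseteq U_{x_0}\cap F_{y_0}\cap\B$, which contradicts $\#(U_x\cap F_y\cap\B)=2$. Second, the paper does not enumerate all incidence patterns and compute invariants. Instead it uses the hypothesis that $X$ models $T^2$ or $\Kl$ \emph{during} the reduction: the case $\#(U_{a_1}\cap U_{a_2}\cap\B)=4$ is excluded because it would force $\mathrm{rk}\,H_1(X)\geq 3$, and $\#(U_{a_1}\cap U_{a_2}\cap\B)=3$ is excluded via \ref{prop_weak_collapse}, which would yield a $15$--point model contradicting \ref{coro_finite_model_of_torus}/\ref{coro_finite_model_of_Klein_bottle}. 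This directly forces $X-\mnl(X)$ to be one of the two posets $\Q0,\Q1$, and similarly for $X-\mxl(X)$, leaving exactly four combinations. Your brute-force enumeration would work in principle, but the list of $(A,B)$--patterns with $\#\Relp^{-1}\equiv 2$ is larger than four and you would still need a topological argument to discard the extras; the paper's route is both shorter and explains why only the $\Q0/\Q1$ structure survives.
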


\begin{proof}
Suppose that $X$ is either a finite model of the torus or a finite model of the Klein bottle. Hence, by \ref{coro_torus_Klein_bottle_not_S2}, $X$ does not satisfy \Spl[2]. Thus, 
by \ref{rem_mxl_and_cond}, $\# \mxl (X) \geq 3$. Also, $\mxl(X)\cap\mnl(X)=\varnothing$ since $X$ is connected. Without loss of generality, we may assume that $\#\mxl(X)\leq\#\mnl(X)$. From \ref{coro_finite_model_of_torus} and \ref{coro_finite_model_of_Klein_bottle} we obtain that $X$ does not have beat points.

\underline{Case 1}: $\# \mxl(X) = 3$. By \ref{lemma_relp_card_B_no_S2}, $\#\B\geq \frac{6n}{n-2}$. Thus,
\begin{displaymath}
\#X=3+n+\#\B\geq 3+n+ \frac{6n}{n-2} = 11+(n-2)+\frac{12}{n-2}\geq 11 + 2\sqrt{12}>17
\end{displaymath}
where the penultimate inequality holds by the inequality of arithmetic and geometric means.

\underline{Case 2}: $\# \mxl(X) = 4$.

\underline{Case 2.1}: $\# \mnl(X) = 4$ or $\# \mnl(X) = 5$. From \ref{lemma_relm_card_B_no_S2} we obtain that $\# \B \geq 8$. Hence, $\#\B=8$ and $\#\mnl(X)=4$.

From \ref{lemma_relp_card_B_no_S2} we obtain that $2\#\B'+4\#\B''\geq 32$. Hence, $\#\B''=8$, $\#\B'=0$ and the equality holds. Thus,  $\B''=\B$ and from \ref{lemma_relp_no_S2} and the proof of \ref{lemma_relp_card_B_no_S2} it follows that $\alpha_b=\beta_b=2$ for all $b\in\B$ and $\#\Relp^{-1}(x,y)=2$ for all $(x,y)\in\M$. We will prove now that $\#(U_a\cap\B)\leq 4$ for all $a\in\mxl(X)$. Let $a\in\mxl(X)$. Let $y_1\in\mnl(X)$ and let $b_1$ and $b_2$ be the elements of $\Relp^{-1}(a,y_1)$. Since $y_1\notin U_{b_1}\cup U_{b_2}$ and $\beta_{b_1}=\beta_{b_2}=2$ it follows that there exists $y_2\in\mnl(X)$ such that $y_2\in U_{b_1}\cap U_{b_2}$. Since $\#\Relp^{-1}(a,y_2)=2$, it follows that $\#\Relm^{-1}(a)\geq 4$ and hence $\#(U_a\cap\B)\leq 4$.

Since $\alpha_b=2$ for all $b\in\B$ and $\#(U_a\cap\B)\leq 4$ for all $a\in\mxl(X)$ we obtain that $\#(U_a\cap\B)= 4$ for all $a\in\mxl(X)$. In a similar way we obtain that $\#(F_y\cap\B)= 4$ for all $y\in\mnl(X)$. And since $\#\Relp^{-1}(x,y)=2$ for all $(x,y)\in\M$ it follows that $\#(U_x\cap F_y\cap\B)=2$ for all $(x,y)\in\M$. In particular, $x>y$ for all $(x,y)\in\M$.

We claim that $\B$ is an antichain. Indeed, suppose that there exist $b_1,b_2\in \B$ with $b_1<b_2$. Since $\alpha_{b_1}=\alpha_{b_2}=2$ we obtain that $F_{b_1}\cap \mxl(X)= F_{b_2}\cap \mxl(X)$. Since $b_1$ is not an up beat point of $X$, there exists $b_3\in \widehat F_{b_1} \cap \B$ such that $b_3\neq b_2$. As before, we obtain that $F_{b_3}\cap \mxl(X)= F_{b_1}\cap \mxl(X)$. Let $x_0\in F_{b_1}\cap \mxl(X)$ and let $y_0\in U_{b_1}\cap \mnl(X)$. Then, $\{b_1,b_2,b_3\}\subseteq U_{x_0}\cap F_{y_0}\cap\B$ which entails a contradiction.

We will prove now that $\#(U_{a_1}\cap U_{a_2}\cap \B)\leq 3$ for all $a_1,a_2\in\mxl(X)$ with $a_1\neq a_2$. Let $a_1,a_2\in\mxl(X)$ with $a_1\neq a_2$ and suppose that $\#(U_{a_1}\cap U_{a_2}\cap \B)=4$. Thus $U_{a_1}\cap \B = U_{a_2}\cap \B$. Let $a_3$ and $a_4$ be the remaining maximal points of $X$. It follows that $U_{a_3}\cap \B = U_{a_4}\cap \B = \B- U_{a_1}$ since $\alpha_b=2$ for all $b\in\B$. Note that $\#(F_y\cap U_{a_j}\cap \B )= 2$ for all $y\in\mnl(X)$ and for all $j\in\{1,3\}$. Let $w\in\mnl(X)$ and let $A=F_w\cup\B$. Since $\widetilde H_0(A)=0$ there is an epimorphism $H_1(X)\to H_1(X,A)$. From proposition 3.2 of \cite{CO} we obtain that 
$H_1(X,A)\cong \bigoplus\limits_{z\in X-A}\widetilde H_0 (\widehat F_z) \cong \Z^3$. Thus, $\rk(H_1(X))\geq 3$ which can not be possible since $X$ is a finite model of either the torus or the Klein bottle. Therefore, $\#(U_{a_1}\cap U_{a_2}\cap \B)\leq 3$ for all $a_1,a_2\in\mxl(X)$ with $a_1\neq a_2$.

Applying this argument to $X^\op$ we obtain that $\#(F_{y_1}\cap F_{y_2}\cap \B)\leq 3$ for all $y_1,y_2\in\mnl(X)$ with $y_1\neq y_2$.

We will prove now that indeed $\#(U_{a_1}\cap U_{a_2}\cap \B)\leq 2$ for all $a_1,a_2\in\mxl(X)$ with $a_1\neq a_2$. Let $a_1$, $a_2$, $a_3$ and $a_4$ be the maximal points of $X$ and suppose that $\#(U_{a_1}\cap U_{a_2}\cap \B)=3$. Let $b_1$, $b_2$ and $b_3$ be the elements of $U_{a_1}\cap U_{a_2}\cap \B$, let $b_4$ be the only element of $U_{a_1}- U_{a_2}$ and let $b_5$ be the only element of $U_{a_2}- U_{a_1}$. Let $b_6$, $b_7$ and $b_8$ be the remaining elements of $\B$. Clearly, $\{b_6,b_7,b_8\}\subseteq U_{a_3}\cap U_{a_4}$. Without loss of generality we may assume that $U_{a_3}\cap\B=\{b_4,b_6,b_7,b_8\}$ and $U_{a_4}\cap\B=\{b_5,b_6,b_7,b_8\}$. Let $c_2$ and $c_3$ be the elements of $U_{b_4}\cap\mnl(X)$. Then, for each $j\in\{2,3\}$ we have that $\#(F_{c_j}\cap \{b_1,b_2,b_3\})=1$ and $\#(F_{c_j}\cap \{b_6,b_7,b_8\})=1$ since $\#(U_{a_1}\cap F_{c_j}\cap\B)=2=\#(U_{a_3}\cap F_{c_j}\cap\B)$. And since $\#(F_{c_j}\cap \B)=4$ for $j\in\{2,3\}$ it follows that $c_j<b_5$ for $j\in\{2,3\}$. Thus $U_{b_4}\cap\mnl(X)=U_{b_5}\cap\mnl(X)=\{c_2,c_3\}$.

Let $c_1$ and $c_4$ be the remaining minimal points of $X$. Note that $\#(F_{c_j}\cap\{b_1,b_2,b_3\})=2$ for $j\in\{1,4\}$. Let $W=\{b_1,b_2,b_3,c_1,c_2,c_3,c_4\}$. Observe that $W=U_{a_1}\cap U_{a_2}$ and that $W$ is homotopy equivalent to $\{b_1,b_2,b_3,c_1,c_4\}$. Note also that $1\leq \#(F_{c_1}^W \cap F_{c_4}^W) \leq 2$.
Suppose that $\#(F_{c_1}^W \cap F_{c_4}^W)=1$. Then $W$ is contractible and hence $U_{a_1}\cup U_{a_2}$ is contractible by \ref{prop_non_Hausdorff_suspension}. Thus, from \ref{prop_weak_collapse} it follows that there exists a finite model of either the torus or the Klein bottle with fewer than 16 points, which contradicts either \ref{coro_finite_model_of_torus} or \ref{coro_finite_model_of_Klein_bottle}. Therefore, $\#(F_{c_1}^W \cap F_{c_4}^W)=2$.

In a similar way, it follows that $\#(F_{c_1} \cap F_{c_4}\cap\{b_6,b_7,b_8\} )=2$. Hence, $\#(F_{c_1} \cap F_{c_4}\cap\B)=4$ which entails a contradiction.

Therefore, $\#(U_{x_1}\cap U_{x_2}\cap \B)\leq 2$ for all $x_1,x_2\in\mxl(X)$ with $x_1\neq x_2$. And applying this argument to $X^\op$ we obtain that $\#(F_{y_1}\cap F_{y_2}\cap \B)\leq 2$ for all $y_1,y_2\in\mnl(X)$ with $y_1\neq y_2$.

Since there are six pairs of maximal points and eight points in $\B$ and $\alpha_b=2$ for all $b\in\B$ we obtain that there exist distinct points $a_1,a_2\in\mxl(X)$ such that $\#(U_{a_1}\cap U_{a_2}\cap\B)=2$. Let $a_3$ and $a_4$ be the remaining maximal points of $X$. Observe that $U_{a_3}\cap U_{a_4}\cap\B=\B-U_{a_1}\cup U_{a_2}$ and hence $\#(U_{a_3}\cap U_{a_4}\cap\B)=2$.

Let $b_1$ and $b_2$ be the elements of $U_{a_1}\cap U_{a_2}\cap\B$, let $b_3$ and $b_4$ be the elements of $\widehat U_{a_1}- U_{a_2}$, let $b_5$ and $b_6$ be the elements of $\widehat U_{a_2}- U_{a_1}$ and let $b_7$ and $b_8$ be the elements of $U_{a_3}\cap U_{a_4}\cap\B$. Since $\alpha_{b_3}=\alpha_{b_4}=2$, without loss of generality we may assume that one of the following two cases hold:
\begin{itemize}
\item $b_3<a_3$ and $b_4<a_3$
\item $b_3<a_3$ and $b_4<a_4$
\end{itemize}
In the first case, we obtain that $b_5<a_4$ and $b_6<a_4$ and thus $X-\mnl(X)$ is homeomorphic to the following space
which will be called $\Q0$.
\[
\xymatrix@R=40pt@C=20pt{& & \bullet\ar@{-}[dll]\ar@{-}[dl]\ar@{-}[d]\ar@{-}[dr] & 
\bullet\ar@{-}[dlll]\ar@{-}[dll]\ar@{-}[dr]\ar@{-}[drr] & 
\bullet\ar@{-}[drrr]\ar@{-}[drr]\ar@{-}[dl]\ar@{-}[dll] & 
\bullet\ar@{-}[drr]\ar@{-}[dr]\ar@{-}[d]\ar@{-}[dl] & & \\
\bullet & \bullet & \bullet & \bullet & 
\bullet & \bullet & \bullet & \bullet
}
\]

In the second case, without loss of generality, we may assume that $b_5<a_3$ and $b_6<a_4$ and thus $X-\mnl(X)$ is homeomorphic to the following space which will be called $\Q1$.
\[
\xymatrix@R=40pt@C=20pt{& & \bullet\ar@{-}[dll]\ar@{-}[dl]\ar@{-}[d]\ar@{-}[dr] & 
\bullet\ar@{-}[dlll]\ar@{-}[dll]\ar@{-}[dr]\ar@{-}[drr] & 
\bullet\ar@{-}[drrr]\ar@{-}[drr]\ar@{-}[d]\ar@{-}[dll] & 
\bullet\ar@{-}[drr]\ar@{-}[dr]\ar@{-}[d]\ar@{-}[dll] & & \\
\bullet & \bullet & \bullet & \bullet & 
\bullet & \bullet & \bullet & \bullet
}
\]
Now, we claim that in any of the cases above, since $U_{a_1}\cap U_{a_2}\cap \B = \{b_1,b_2\}$ then $U_{b_1}\cap U_{b_2}=\varnothing$. Indeed, suppose that $z\in U_{b_1}\cap U_{b_2}$. Since $\#(U_{a_j}\cap F_{z}\cap\B)=2$ for all $j\in\{1,2,3,4\}$ we obtain that $z<b_7$ and $z<b_8$. Thus, if $z_1$ and $z_2$ are distinct elements of $U_{b_1}\cap U_{b_2}$ then $\#(F_{z_1}\cap F_{z_2}\cap\B)=4$ which entails a contradiction. Therefore, $\#(U_{b_1}\cap U_{b_2})\leq 1$. If $U_{b_1}\cap U_{b_2}= \{z_1\}$ then $U_{a_1}\cap U_{a_2}=\{b_1,b_2\}\cup \mnl(X)$ which is the disjoint union of two contractible spaces. Hence, by \ref{prop_non_Hausdorff_suspension}, $U_{a_1}\cup U_{a_2}$ is homotopy equivalent to $\Su D_2$. Note that the inclusion of $X-(U_{a_1}\cup U_{a_2})=\{a_3,a_4,b_7,b_8\}$ into $X$ induces the trivial map between the fundamental groups since $X-(U_{a_1}\cup U_{a_2})$ is included in the contractible subspace $\{a_3,a_4,b_7,b_8,z_1\}$. Therefore, $U_{b_1}\cap U_{b_2}=\varnothing$ as desired.

Clearly, the same conclusion holds for the pair $b_7$, $b_8$ in both cases and for the pairs $b_3$, $b_4$ and $b_5$, $b_6$ in the first case. Applying the same reasoning to $X^\op$ it follows that if $y_1,y_2\in\mnl(X)$ are such that $F_{y_1}\cap F_{y_2}\cap \B = \{b',b''\}$ (with $b'\neq b''$) then $F_{b'}\cap F_{b''}=\varnothing$.

As above, let $\mnl(X)=\{c_1,c_2,c_3,c_4\}$.

We will analyze now the first case, that is, $X-\mnl(X)$ is homeomorphic to $\Q0$. Without loss of generality we may assume that $\#(F_{c_1}\cap F_{c_2}\cap \B)=2$ and that $b_1\in F_{c_1}\cap F_{c_2}$. Let $j\in\{2,3,\ldots,8\}$ be such that $F_{c_1}\cap F_{c_2}\cap \B=\{b_1,b_j\}$. Thus, $F_{b_1}\cap F_{b_j}=\varnothing$ and hence $j=7$ or $j=8$. Without loss of generality we may suppose that $j=7$. Now, since $U_{b_1}\cap U_{b_2}=\varnothing$ and $U_{b_7}\cap U_{b_8}=\varnothing$ we obtain that $U_{b_2}\cap\mnl(X)=U_{b_8}\cap\mnl(X)=\{c_3,c_4\}$. And since $U_{b_3}\cap U_{b_4}=\varnothing$ and $U_{b_5}\cap U_{b_6}=\varnothing$ then $\#(F_{c_k}\cap\{b_3,b_4\})=\#(F_{c_k}\cap\{b_5,b_6\})=1$ for all $k\in\{1,2,3,4\}$. Hence, without loss of generality we may assume that $F_{c_1}\cap\{b_3,b_4,b_5,b_6\}=\{b_3,b_5\}$. And since $F_{c_1}\cap F_{c_2}\cap \B=\{b_1,b_7\}$ it follows that $F_{c_2}\cap\{b_3,b_4,b_5,b_6\}=\{b_4,b_6\}$.

Now, relabelling $c_3$ and $c_4$ if necessary we may suppose that $c_3<b_3$ and since $\#(F_{c_3}\cap F_{c_4}\cap \B)\leq 2$ and $b_2,b_8\in F_{c_3}\cap F_{c_4}\cap \B$ we obtain that $c_4<b_4$.

There are now two possibilities: $c_3<b_5$ or $c_3<b_6$.

If $c_3<b_5$ then $c_4<b_6$ and $X-\mxl(X)$ is homeomorphic to $\Q0^\op$. In this case $X$ is homeomorphic to the space $\T_{0,0}$ of page \pageref{fig_finite_Q00}.

If $c_3<b_6$ then $c_4<b_5$ and $X-\mxl(X)$ is homeomorphic to $\Q1^\op$. In this case $X$ is homeomorphic to the space $\Kl_{0,1}$ of page \pageref{fig_finite_Q01}.

We analyze now the second case, that is, $X-\mnl(X)$ is homeomorphic to $\Q1$. Without loss of generality we may suppose that $U_{b_4}\cap\mnl(X)=\{c_1,c_2\}$. Since $\#(F_{c_1}\cap U_{a_1}\cap\B)=2=\#(F_{c_1}\cap U_{a_4}\cap\B)$ it follows that $\#(F_{c_1}\cap\{b_1,b_2,b_3\})=\#(F_{c_1}\cap\{b_6,b_7,b_8\})=1$ and since $\#(F_{c_1}\cap\B)=4$ we obtain that $b_5\in F_{c_1}$. In a similar way we obtain that $b_5\in F_{c_2}$.

Now, since $U_{b_1}\cap U_{b_2}=\varnothing$ and $\beta_{b_1}=\beta_{b_2}=2$ it follows that $U_{b_1}\cup U_{b_2}\supseteq \mnl(X)$. Therefore, $F_{c_1}\cap\{b_1,b_2\}\neq \varnothing$ and hence $b_3\notin F_{c_1}$. In a similar way, $b_6\notin F_{c_1}$ and $b_3,b_6\notin F_{c_2}$. Thus, $U_{b_3}\cap\mnl(X)=U_{b_6}\cap\mnl(X)=\{c_3,c_4\}$.

Without loss of generality we may assume that $c_1<b_1$ and that $c_1<b_7$. Since $\#(F_{c_1}\cap F_{c_2}\cap \B)\leq 2$ and $b_4,b_5\in F_{c_1}\cap F_{c_2}\cap \B$ we obtain that $F_{c_2}\cap\B=\{b_2,b_4,b_5,b_8\}$. Now, relabelling $c_3$ and $c_4$ if necessary, we may suppose that $U_{b_1}\cap\mnl(X)=\{c_1,c_3\}$ and hence $U_{b_2}\cap\mnl(X)=\{c_2,c_4\}$.

There are now two possibilities: $c_3<b_7$ or $c_3<b_8$.

If $c_3<b_7$ then $c_4<b_8$ and $X-\mxl(X)$ is homeomorphic to $\Q0^\op$. In this case $X$ is homeomorphic to the space $\Kl_{1,0}$ of figure \ref{fig_finite_Q10} of page \pageref{fig_finite_Q10}.

If $c_3<b_8$ then $c_4<b_7$ and $X-\mxl(X)$ is homeomorphic to $\Q1^\op$. In this case $X$ is homeomorphic to the space $\T_{1,1}$ of figure \ref{fig_finite_Q11} of page \pageref{fig_finite_Q10}.

\underline{Case 2.2}: $\# \mnl(X) \geq 6$. In this case, $\#\B \geq 6$ by \ref{lemma_relm_card_B_no_S2}. Hence, $\#\B=6$ and $\#\mnl(X)=6$. Thus, from the proof of \ref{lemma_relm_card_B_no_S2} we obtain that $\Relm^{-1}(\mxl(X))=\B$ and that $\sigma_a=\frac{3}{2}$ for all $a\in\mxl(X)$.

We will prove now that for all $a\in\mxl(X)$ we have that $\#\Relm^{-1}(a)=3$ and that the subsets $U_b$, $b\in\Relm^{-1}(a)$, are pairwise disjoint. In particular, we will obtain that $\Relm^{-1}(a)$ is an antichain for all $a\in\mxl(X)$. We will also prove that $\alpha_b=\beta_b=2$ for all $b\in \Relm^{-1}(\mxl(X))=\B$.

Let $a\in\mxl(X)$. Since
$$\sigma_a=\sum\limits_{b\in \Relm^{-1}(a)} \frac{1}{\# \Relm(b)}\geq \frac{1}{2}\#\Relm^{-1}(a)$$
it follows that $\#\Relm^{-1}(a)\leq 3$. Also, $\#\Relm^{-1}(a)\geq 2$ by \ref{rem_relm_geq_2}.

Suppose that $\#\Relm^{-1}(a)=2$. Then, by \ref{lemma_card_B-Ua=2}, $\alpha_b = 3$ for all $b\in \Relm^{-1}(a)$ and thus $\sigma_a = 2$ which entails a contradiction. Hence $\#\Relm^{-1}(a)=3$ and thus $\alpha_b = 2$ for all $b\in \Relm^{-1}(a)$. 

Suppose that $\Relm^{-1}(a)=\{b_1,b_2,b_3\}$. From \ref{lemma_relp_no_S2} it follows that $U_{b_1}$, $U_{b_2}$ and $U_{b_3}$ are pairwise disjoint. Hence, $\Relm^{-1}(a)$ is an antichain and $\beta_{b_j} = 2$ for all $j\in \{1,2,3\}$.

We will prove now that $x>y$ for all $x\in\mxl(X)$ and $y\in\mnl(X)$ and that $\B$ is an antichain. Let $a_1\in\mxl(X)$ and suppose that $\Relm^{-1}(a_1)=\{b_1,b_2,b_3\}$. If $F_{b_1}\cap F_{b_2}\cap F_{b_3}=\varnothing$, it is easy to check that $X-V_{a_1}^X$ is weak equivalent to $S^1$ and hence the splitting triad $(X;X-V_{a_1}^X;V_{a_1}^X)$ satisfies \Spl[2]. Thus, there exists $a_2\in\mxl(X)$ such that $\{b_1,b_2,b_3\}\subseteq U_{a_2}$. It follows that $\Relm^{-1}(a_2)=U_{a_1}\cap\B$ and thus $U_{a_1}\cap\B$ is an antichain. Therefore, $\B$ is an antichain. Now, if $\Relm^{-1}(a_2)=\{b_4,b_5,b_6\}$ then $U_{b_4}$, $U_{b_5}$ and $U_{b_6}$ are pairwise disjoint and hence $\mnl(X) \subseteq U_{b_4}\cup U_{b_5} \cup U_{b_6} \subseteq U_{a_1}$.

Therefore, $\chi(X)=16-(12+12+24)+24=-8$, which entails a contradiction.

\underline{Case 3}: $\# \mxl(X) = 5$.

\underline{Case 3.1}: $\# \mnl(X) = 5$. Thus, $\#\B=6$. By \ref{rem_relm_geq_2}, $\#(U_a\cap\B)\leq 4$ for all $a\in\mxl(X)$. We will prove now that $\#(U_a\cap\B)\leq 3$ for all $a\in\mxl(X)$. Suppose that there exists $a\in\mxl(X)$ such that $\#(U_a\cap\B)=4$. Let $b_1,b_2\in\B$ such that $\B-U_a=\{b_1,b_2\}$. From \ref{lemma_card_B-Ua=2} we obtain that $U_{b_1}\cap U_{b_2}=\varnothing$ and that $\beta_{b_1}\geq 3$ and $\beta_{b_2}\geq 3$, which can not be possible since $\# \mnl(X) = 5$. Therefore, $\#(U_a\cap\B)\leq 3$ for all $a\in\mxl(X)$. Applying the same argument to $X^\op$ it follows that $\#(F_t\cap\B)\leq 3$ for all $t\in\mnl(X)$.

Let $\A=\{x\in\mxl(X)\tq \#(U_x\cap\B)= 3 \}$ and let $\B'$ and $\B''$ be defined as in lemma \ref{lemma_relp_card_B_no_S2}. Let $a\in \A$ and let $b_1$, $b_2$ and $b_3$ be the elements of $\B-U_a$. By \ref{rem_minimal_space_two_points}, $\beta_{b_j}\geq 2$ for all $j\in\{1,2,3\}$. Since $\#\mnl(X)=5$, without loss of generality we may assume that $U_{b_2}\cap U_{b_3}\cap\mnl(X) \neq\varnothing$. Let $z\in U_{b_2}\cap U_{b_3}\cap\mnl(X)$. Then from \ref{lemma_relp_no_S2} we obtain that $\Relp^{-1}(a,z)=\{b_1\}$ and $b_1\in\B'$. If, in addition, $U_{b_1}\cap U_{b_3}\cap\mnl(X) \neq\varnothing$, by the argument above we get that $b_2\in\B'$ and thus $U_{b_1}\cap U_{b_2}\cap\mnl(X) \neq\varnothing$ and it follows that $b_3\in\B'$. Therefore, there exists $a'\in F_{b_1}\cap F_{b_2}\cap F_{b_3} \cap \mxl(X)$. Hence, $a'\in\A$ and thus, by the argument above $\B'-U_{a'}\neq \varnothing$. It follows that $\#\B'\geq 4$ and hence there exists $c\in\mxl(X)$ such that $\#(U_c\cap\B)\geq 4$, which entails a contradiction. Therefore, $U_{b_1}\cap U_{b_3}\cap\mnl(X)=\varnothing$. In a similar way we obtain that $U_{b_1}\cap U_{b_2}\cap\mnl(X)=\varnothing$. And since $\beta_{b_1}\geq 3$, $\beta_{b_2}\geq 2$ and $\beta_{b_3}\geq 2$ it follows that $\beta_{b_1}= 3$, $\beta_{b_2}= \beta_{b_3}=2$ and $U_{b_2}\cap\mnl(X)=U_{b_3}\cap\mnl(X)$. Note that $b_2,b_3\in\B''$. Hence for each $a\in\A$ there exists a unique $b\in\B'$ such that $b\Relm a$.

It follows that $\#\A=\#(\Relm\cap (\B'\times \A))\leq 2 \# \B'$. Therefore
\begin{displaymath}
12 +\frac{\#\A}{2} \leq 12 + \#\B' = 2\#\B+\#\B'=3\#\B'+2\#\B''\leq \sum_{b\in\B}\alpha_b 
\end{displaymath}
and since
\begin{displaymath}
\sum_{b\in\B}\alpha_b \leq 3\#\A + 2(5-\#\A)= 10 + \# \A
\end{displaymath}
we obtain that $12 +\frac{1}{2}\#\A \leq 10 + \# \A$ and hence $\#\A\geq 4$ and $\#\B'\geq 2$.

As in the proof of lemma \ref{lemma_relp_card_B_no_S2}, we have that
$$50\leq 2\sum\limits_{b\in \B'}(5-\alpha_b)(5-\beta_b)+\sum\limits_{b\in \B''}(5-\alpha_b)(5-\beta_b)$$
and since $\#\B'\geq 2$, it is not difficult to prove that $\alpha_b=\beta_b=3$ for all $b\in\B'$ and $\alpha_b=\beta_b=2$ for all $b\in\B''$.

Now, note that $\sigma_a=\frac{7}{6}$ for all $a\in \A$ and since $\sum\limits_{a\in\mxl(X)}\sigma_a=\#\Relm^{-1}(\mxl(X))=\#\B=6$ it follows that $\A\neq\mxl(X)$ and hence $\#\A=4$. Let $c$ be the only element of $\mxl(X)-\A$. Then $\sigma_c=\frac{4}{3}$ and hence $\#\Relm^{-1}(c)\leq 4$. Since $c\notin \A$ we obtain that $\#\Relm^{-1}(c)= 4$ and hence $\alpha_b=2$ for all $b\in\Relm^{-1}(c)$. Thus, $U_c\cap \B=\B'$ and $\#\B'=2$. In a similar way, working with $X^\op$ we obtain that there exists $x\in\mnl(X)$ such that $F_x\cap \B=\B'$.

Let $c_1$ and $c_2$ be the elements of $\B'$. Let $a_1\in\mxl(X)\cap F_{c_1}-\{c\}$. Since $a_1\in\A$, there exist distinct elements $d_1,d_2\in\B''-U_{a_1}$ and by the arguments above we know that $U_{d_1}\cap\mnl(X)=U_{d_2}\cap\mnl(X)$ and $\beta_{d_1}=\beta_{d_2}=2$. Let $y,z\in\mnl(X)$ such that $U_{d_1}\cap\mnl(X)=\{y,z\}$.

Since $\alpha_{d_1}=\alpha_{d_2}=2$ we obtain that there exists $a_2\in(\mxl(X)-\{a_1,c\})\cap F_{d_1}\cap F_{d_2}$. Since $a_2\in\A$, there exist distinct elements $e_1,e_2\in\B''-U_{a_2}$ and by the arguments above we know that $U_{e_1}\cap\mnl(X)=U_{e_2}\cap\mnl(X)$ and $\beta_{e_1}=\beta_{e_2}=2$. Let $v,w\in\mnl(X)$ such that $U_{e_1}\cap\mnl(X)=\{v,w\}$. Note that $\{v,w\}\cap\{y,z\}=\varnothing$ because if $u\in \{v,w\}\cap\{y,z\}$ then $F_u\cap \B \supseteq \{d_1,d_2,e_1,e_2\}$ which entails a contradiction. Note also that $x\notin \{v,w,y,z\}$.

Now take $C=F_x \cup \mxl(X)$ (in fact, $C=F_x$ since it is easy to prove that $\mxl(X)\subseteq F_x$). The connected components of $X-C$ are $\{d_1,d_2,y,z\}$ and $\{e_1,e_2,v,w\}$ and since both of them are included in contractible subspaces of $X$ ($U_{a_2}$ and $U_{a_1}$ respectively) we obtain that $X$ satisfies \Spl[2].

\underline{Case 3.2}: $\# \mnl(X) \geq 6$. From \ref{lemma_relm_card_B_no_S2} we obtain that $\#\B\geq 5$. Thus, $\#\B=5$ and $\#\mnl(X)=6$. Hence, from the proof of \ref{lemma_relm_card_B_no_S2} we obtain that $\sigma_a=1$ for all $a\in\mxl(X)$.

By \ref{rem_relm_geq_2}, $\#\Relm^{-1}(a)\geq 2$ for all $a\in\mxl(X)$. Thus, $\#(U_a \cap\B)\leq 3$ for all $a\in\mxl(X)$.

Suppose that $\#(U_a \cap\B)= 3$ for all $a\in\mxl(X)$. Let $a_1\in\mxl(X)$ and suppose that $U_{a_1} \cap\B = \{b_1,b_2,b_3\}$ and $\B - U_{a_1}=\{b_4,b_5\}$. By \ref{lemma_card_B-Ua=2}, $\{b_4,b_5\}$ is an antichain and $\#(F_{b_4}\cap F_{b_5}\cap\mxl(X))\geq 3$. Let $a_3,a_4,a_5$ be three distinct elements of $F_{b_4}\cap F_{b_5}\cap\mxl(X)$ (note that none of them is $a_1$). Since $\#(U_{a_3} \cap\B)= 3$ it follows that $\#(U_{a_3}\cap \{b_1,b_2,b_3\})=1$. Without loss of generality, we may assume that $b_3\in U_{a_3}$. Hence, $\B - U_{a_3}=\{b_1,b_2\}$ and applying \ref{lemma_card_B-Ua=2} again we obtain that $\#(F_{b_1}\cap F_{b_2}\cap\mxl(X))\geq 3$. Thus, there exists $k\in\{4,5\}$ such that $a_k\in F_{b_1}\cap F_{b_2}$. Hence, $\{b_1,b_2,b_4,b_5\}\subseteq U_{a_k} \cap\B$ which contradicts a previous assertion.

Therefore, there exists $a_1\in\mxl(X)$ such that $\#(U_{a_1} \cap\B)\leq 2$. Then $\# \Relm^{-1}(a_1)\geq 3$ and hence
$$1=\sigma_{a_1}=\sum_{b\in\Relm^{-1}(a_1)}\frac{1}{\#\Relm(b)}\geq \frac{1}{3}  \# \Relm^{-1}(a_1) \geq 1$$
Thus, $\# \Relm^{-1}(a_1)= 3$, $\#(U_{a_1} \cap\B)= 2$ and $\#\Relm(b)=3$ for all $b\in\Relm^{-1}(a_1)$. Hence, $\alpha_b=2$ for all $b\in\Relm^{-1}(a_1)$.

Suppose that $U_{a_1} \cap\B = \{b_1,b_2\}$ and $\Relm^{-1}(a_1)=\{b_3,b_4,b_5\}$. From \ref{lemma_relp_no_S2} it follows that the sets $U_{b_3}$, $U_{b_4}$ and $U_{b_5}$ are pairwise disjoint. In particular, $\{b_3,b_4,b_5\}$ is an antichain. And since $\beta_{b_j}\geq 2$ for all $j\in\{3,4,5\}$ we obtain that $\mnl(X)\subseteq U_{b_3}\cap U_{b_4} \cap U_{b_5}$ and $\beta_{b_j}= 2$ for all $j\in\{3,4,5\}$.

Let $i\in \{1,2\}$. If $\beta_{b_i}\leq 3$ then there exist distinct elements $j,k\in\{3,4,5\}$ such that $\#(U_{b_i} \cap U_{b_j})\leq 1$ and $\#(U_{b_i} \cap U_{b_k})\leq 1$. Let $C=\{b_i,b_j,b_k\}\cup\mnl(X)$. Note that the connected components of $C$ are contractible. Thus, proceeding as in the proof of \ref{lemma_card_B-Ua=2} it follows that the splitting triad $(X;X-C,C)$ satisfies \Spl[2].

Therefore, $\beta_{b_i}\geq 4$ for each $i\in \{1,2\}$. Thus, there exist $w\in\mnl(X)$ such that $w\in U_{b_1}\cap U_{b_2}$. Let $G=F_w\cup\mxl(X)$. Note that the connected components of $X-G$ are contractible. Hence, the splitting triad $(X;X-G,G)$ satisfies \Spl[2].

\underline{Case 4}: $\# \mxl(X) = 6$. In this case, $\#\B \geq 4$ by \ref{lemma_relm_card_B_no_S2}. Hence, $\#\B=4$ and $\#\mnl(X)=6$. Thus, from the proof of \ref{lemma_relm_card_B_no_S2} we obtain that $\sigma_a=\frac{2}{3}$ for all $a\in\mxl(X)$.

Let $a\in\mxl(X)$. By \ref{rem_relm_geq_2}, $\#\Relm^{-1}(a)\geq 2$. If $\#\Relm^{-1}(a)\geq 3$ then $\sigma_a\geq\frac{3}{4}$ which entails a contradiction. Thus, $\#\Relm^{-1}(a)= 2$ for all $a\in\mxl(X)$.

Let $a_1\in\mxl(X)$ and suppose that $U_{a_1}\cap\B=\{b_1,b_2\}$ and $\Relm^{-1}(a_1)=\{b_3,b_4\}$. By \ref{lemma_card_B-Ua=2}, $\{b_3,b_4\}$ is an antichain and $\#(F_{b_3}\cap F_{b_4}\cap\mxl(X))\geq 3$. And since $\sigma_{a_1}=\frac{2}{3}$, we obtain that $\alpha_{b_3}=\alpha_{b_4}=3$ and $\#(F_{b_3}\cap F_{b_4}\cap\mxl(X))= 3$. Let $a_4,a_5,a_6\in\mxl(X)$ such that $F_{b_3}\cap F_{b_4}\cap\mxl(X)=\{a_4,a_5,a_6\}$ and let $a_2$ and $a_3$ be the remaining maximal points of $X$. It follows that $U_{a_j}\cap \B=\{b_3,b_4\}$ for all $j\in\{4,5,6\}$ and hence $U_{a_j}\cap \B=\{b_1,b_2\}$ for all $j\in\{1,2,3\}$ by \ref{rem_minimal_space_two_points}. Note that $\{b_1,b_2\}$ is an antichain by \ref{lemma_card_B-Ua=2}. 

Thus, $X-\mnl(X)$ is homeomorphic to $(\Su D_3)^\op \amalg (\Su D_3)^\op$. Working with $X^\op$ in a similar way, we obtain that $X-\mxl(X)$ is homeomorphic to $\Su D_3 \amalg \Su D_3$.

By \ref{lemma_relp_no_S2}, $U_{b_1}\cap U_{b_2}=\varnothing$ and $U_{b_3}\cap U_{b_4}=\varnothing$. Since $X-\mxl(X)$ is homeomorphic to $\Su D_3 \amalg \Su D_3$ it follows that there exists $k\in\{3,4\}$ such that $U_{b_1}\cap U_{b_k}=\varnothing$. Let $C=U_{b_1}\cup U_{b_k}$. Note that the connected components of $C$ and its complement are contractible. Therefore, the splitting triad $(X;C,X-C)$ satisfies \Spl[2].

Observe the similarities between the proof of this case and the proof of case $m_X=n_X=l_X=4$ of \ref{theo_13_points}.

\underline{Case 5}: $\# \mxl(X) \geq 7$. From lemma \ref{lemma_relm_card_B_no_S2} we obtain that $\#\B \geq m_X.\frac{2}{m_X-3}$ and thus $\#X \geq 2m_X+\frac{2m_X}{m_X-3}>2m_X+2\geq 16$.
\end{proof}

\section{Open questions} \label{section_open_questions}

In this section, we will exhibit finite models for very well-known families of spaces and raise open questions about their minimality.

\subsection{Real projective spaces}

\begin{notat}
  For $n\in \N$ we will write $\Pp^{n}$ for the $n$--dimensional real projective space.
\end{notat}

\begin{prop} \label{prop_models_of_real_projective_spaces}
For $n\in \N$, there exists a finite model of $\Pp^n$ with $\dfrac{3^{n+1}-1}{2}$ points.
\end{prop}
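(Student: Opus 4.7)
The plan is to realize $\Pp^n$ as the quotient of $S^n$ by the antipodal action, using the regular cell decomposition of $S^n$ coming from the boundary of the $(n+1)$--dimensional cross--polytope; the face poset of the induced regular CW--structure on $\Pp^n$ will then be a finite model by McCord's theorem. Concretely, I would consider $C^{n+1}\subseteq\R^{n+1}$, the convex hull of $\{\pm e_1,\dots,\pm e_{n+1}\}$. Its boundary $\partial C^{n+1}\cong S^n$ carries a regular CW--structure whose open cells are the open simplices spanned by the \emph{sign--coherent} nonempty subsets of $\{\pm e_1,\dots,\pm e_{n+1}\}$, that is, subsets $\sigma$ with $\{+e_i,-e_i\}\not\subseteq\sigma$ for every $i$; a $(k+1)$--element sign--coherent subset contributes a $k$--cell, giving $\binom{n+1}{k+1}2^{k+1}$ cells in dimension $k$.

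Next I would analyze the antipodal involution $x\mapsto -x$, which acts freely and cellularly on $\partial C^{n+1}$. The decisive geometric observation is that for every sign--coherent $\sigma$ the closed simplices $\overline{\sigma}$ and $\overline{-\sigma}$ are disjoint in $\partial C^{n+1}$: a common point would simultaneously satisfy $\epsilon_j x_{i_j}\geq 0$ and $\epsilon_j x_{i_j}\leq 0$ on the coordinates indexed by $\sigma$ and vanish on all other coordinates, so it would have to be the origin, which does not lie on $\partial C^{n+1}$. Hence the quotient map $q:\partial C^{n+1}\to\Pp^n$ is injective on every closed cell, and, being continuous from a compact space to a Hausdorff one, it is a homeomorphism there. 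Consequently the induced cell decomposition of $\Pp^n$ is a genuine regular CW--structure whose closed cells are simplices.

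The cells of $\Pp^n$ correspond to antipodal pairs of sign--coherent subsets, so the number of $k$--cells is $\binom{n+1}{k+1}2^{k}$, and summing over $k$ gives
\[
\sum_{k=0}^{n}\binom{n+1}{k+1}2^{k}=\tfrac{1}{2}\!\left(\sum_{j=0}^{n+1}\binom{n+1}{j}2^{j}-1\right)=\tfrac{3^{n+1}-1}{2}.
\]
By McCord's theorem, recalled in Section 2, the face poset $\X(\Pp^n)$ of this regular CW--structure is a finite T$_0$--space weak homotopy equivalent to $\Pp^n$, providing the desired finite model. The main technical point is the disjointness of antipodal closed cells in $\partial C^{n+1}$; without it the naive quotient cell structure would fail to be regular and one would have to pass to a barycentric subdivision, destroying the sharp cell count. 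The remainder of the argument is a direct combinatorial computation, and the result specializes correctly to the known cases $n=1$ (four points) and $n=2$ (thirteen points, recovering $\Pp^{2}_{1}$).
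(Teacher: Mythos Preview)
Your proof is correct and the resulting poset is, up to taking opposites, the same one the paper constructs; however, the route you take is genuinely different. The paper defines $P_n$ purely order--theoretically as the antipodal quotient of $\Lambda^{n+1}-\{\max\}$ where $\Lambda=\{-1,0,1\}$ with $0$ maximal, and then produces an explicit continuous map $\varphi:\Pp^n\to P_n$ sending $[(x_i)]$ to the class of $(\sgn(x_i))$; the weak equivalence is obtained by checking that the preimage of each minimal open set is an open orthant--like region of $\Pp^n$, hence contractible, and invoking McCord's theorem for open covers. You instead build a regular CW--structure on $\Pp^n$ as the antipodal quotient of the boundary of the cross--polytope, verify regularity via the disjointness of antipodal closed simplices, and then appeal to the general fact (also recalled in Section~2) that the face poset of a regular CW--complex is a finite model.

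Each approach has its own economy: the paper's argument avoids having to check that the quotient CW--structure is regular, at the price of computing the preimages $\varphi^{-1}(U_{\overline a})$ and verifying their contractibility; your argument replaces that analytic step by a clean combinatorial one (disjointness of $\overline\sigma$ and $\overline{-\sigma}$), and the cell count then falls out of the binomial identity. It is worth noting that your poset is $\X(\Pp^n)$ with cells ordered by inclusion, so top--dimensional cells are maximal, whereas in the paper's $P_n$ the tuples with a single nonzero entry (corresponding to vertices of the cross--polytope) are maximal; thus your model is $P_n^{\op}$, which of course has the same weak homotopy type.
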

\begin{proof}
Let $\Lambda$ be the finite T$_0$--space whose underlying set is $\{-1,0,1\}$ with the topology associated to the order generated by $0>1$ and $0>-1$.

Let $n\in \N$. Let $S_n$ be the subspace of the product $\Lambda^{n+1}$ obtained by removing its maximum $(0)_{i=1}^{n+1}$. We define the following relation in $S_n$: 
$$(a_i)_{i=1}^{n+1}\sim (b_i)_{i=1}^{n+1}\Leftrightarrow (a_i)_{i=1}^{n+1}=k(b_i)_{i=1}^{n+1} \textnormal{ for $k=1$ or $k=-1$}.$$ 

It is easy to see that $\sim$ is an equivalence relation. Let $P_n=S_n/\sim$ with the topology induced by the canonical projection $q:S_n\to P_n$. It is not difficult to prove that $P_n$ is a T$_0$--space and that $U^{P_n}_{q(a)}=q(U^{S_n}_{a})$ for all $a\in S_n$.

Consider the function $\varphi:\Pp^n\to P_n$ that sends $[(x_i)_{i=1}^{n+1}]$ to $q\left((\sgn(x_i))_{i=1}^{n+1}\right)$. It is clear that $\varphi$ is well defined. We claim that $\varphi$ is a weak homotopy equivalence, and thus $P_n$ is a finite model of $\Pp^n$.

Let $p$ be the canonical projection of $\R^{n+1}-\{0\}$ on $\Pp^n$ and let $\overline{a}=q((a_i)_{i=1}^{n+1})\in P_n$. 
For $1\leq i\leq n+1$ we define
\begin{displaymath}
J_i=\left\{
\begin{array}{ll}
\R&\text{ if $a_i=0$}\\
\R_{>0}&\text{ if $a_i=1$}\\
\R_{<0}&\text{ if $a_i=-1$}\\
\end{array}
\right.
\end{displaymath}
Let $J=\prod\limits_{i=1}^{n+1} J_i$. It is easy to prove that $\varphi^{-1}(U_{\overline{a}})=p(J)$ and that $p(J)$ is contractible and open in $\Pp^n$. By McCord's theorem (\cite[theorem 6]{McC}) it follows that $\varphi$ is a weak homotopy equivalence.
\end{proof}

\begin{rem}
The space $P_1$ constructed in the previous proof is homeomorphic to $\Su S^0$, and hence it is the minimal finite model of $\Pp^{1}\cong S^{1}$. It is easy to check that the space $P_2$ is homeomorphic to the space $\Pp^{2}_1$ of figure \ref{fig_finite_P2_1} which is a minimal finite model of $\Pp^{2}$ by \ref{coro_real_proj_plane}.
\end{rem}

The previous remark leads naturally to the following question.

\begin{question}
Let $n\in \N$, $n\geq 3$. Is the space $P_n$ of the proof of \ref{prop_models_of_real_projective_spaces} a minimal finite model of $\Pp^n$?
\end{question}

\subsection{Tori}

\begin{notat}
For $n\in \N$ we will write $\Tn$ for the $n$--dimensional torus $\prod\limits_{i=1}^{n}S^{1}$.
\end{notat}

\begin{prop} \label{prop_models_of_tori}
Let $n\in \N$. There exists a finite model of $\Tn$ with $4^n$ points.
\end{prop}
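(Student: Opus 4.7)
I would take the explicit model $X=(\mathbb{S}S^0)^n$, the $n$--fold Cartesian product of $\mathbb{S}S^0$ equipped with the product topology. Since $\mathbb{S}S^0$ has $4$ points (two minimal and two maximal, every minimal below every maximal), $X$ is a finite $T_0$--space with $4^n$ points, and its partial order is the componentwise order. The whole task then reduces to verifying that $X$ is weak homotopy equivalent to $\mathbb{T}^n=(S^1)^n$.

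The first step is to observe that $|\mathcal{K}(\mathbb{S}S^0)|$ is homeomorphic to $S^1$: the non-trivial chains of $\mathbb{S}S^0$ are exactly the four covering pairs, so $\mathcal{K}(\mathbb{S}S^0)$ is a $4$--cycle triangulating the circle. This recovers, via McCord's theorem, that $\mathbb{S}S^0$ is a finite model of $S^1$ and gives the base case $n=1$ of what follows.

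Next I would establish, by induction on $n$, the auxiliary statement: if $Y_1,\dots,Y_n$ are finite $T_0$--spaces, then $Y_1\times\dots\times Y_n$ (with product topology, equivalently with componentwise order) is a finite model of $|\mathcal{K}(Y_1)|\times\dots\times|\mathcal{K}(Y_n)|$. For the inductive step it suffices to treat $n=2$, and here I would argue as follows. The projections $p_1,p_2$ induce a simplicial map $\mathcal{K}(Y\times Z)\to \mathcal{K}(Y)\times\mathcal{K}(Z)$ whose geometric realization is a homotopy equivalence (this is the standard fact that the nerve of a product of posets is weakly equivalent to the product of nerves). Combining this with the McCord weak equivalences $|\mathcal{K}(Y\times Z)|\to Y\times Z$, $|\mathcal{K}(Y)|\to Y$ and $|\mathcal{K}(Z)|\to Z$, and using that products of weak equivalences between CW complexes are weak equivalences, yields the desired conclusion. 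Alternatively one can invoke McCord's theorem directly on $Y\times Z$ using the open cover $\{U_y^Y\times U_z^Z:(y,z)\in Y\times Z\}$, each element of which is contractible.

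Applying the auxiliary statement with $Y_i=\mathbb{S}S^0$ for all $i$ gives that $(\mathbb{S}S^0)^n$ is a finite model of $(S^1)^n=\mathbb{T}^n$, completing the argument. The main delicate point is the product-of-weak-equivalences step, which requires either the direct McCord argument on the product cover or the classical nerve-of-product result; once this is in hand, everything else is formal.
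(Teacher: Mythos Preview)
Your proposal is correct and follows essentially the same approach as the paper: take $X=(\mathbb{S}S^0)^n$, which has $4^n$ points, and argue that $|\mathcal{K}(X^n)|\simeq |\mathcal{K}(\mathbb{S}S^0)|^n\cong (S^1)^n=\mathbb{T}^n$. The paper simply invokes the equivalence $|\mathcal{K}(X^n)|\simeq |\mathcal{K}(X)|^n$ without further justification, whereas you spell out how to obtain it via the nerve-of-products argument or McCord's theorem on the product cover.
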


\begin{proof}
Let $X=\Su S^0$ which is a finite model of $S^1$ with four points. Then $|\K(X^n)|\simeq |\K(X)|^n\cong \prod\limits_{i=1}^{n}S^{1}\cong \Tn$ and hence $X^n$ is a finite model of $\Tn$ with $4^n$ points.
\end{proof}

\begin{rem} 
Note that $\Su S^0$ is a minimal finite model of $\mathbb{T}^{1}\cong S^{1}$ with four points. Note also that the space $\Su S^0\times \Su S^0$ is homeomorphic to the space $\T_{0,0}$ of figure \ref{fig_finite_Q00} which, by \ref{coro_finite_model_of_torus}, is a minimal finite model of $\T$ with 16 points.
\end{rem}

As above, the previous remark leads to the following question.

\begin{question} 
Let $n\in \N$, $n\geq 3$. Is the space $(\Su S^0)^n$ a minimal finite model of $\Tn$?
\end{question}

\subsection{Moore spaces $M(\Z_n,k)$}

\begin{prop} \label{prop_models_of_Moore_spaces}
Let $n,k\in \N$ with $n\geq 2$. There exists a finite model of $M(\Z_n,k)$ with $4n+2k+3$ points.
\end{prop}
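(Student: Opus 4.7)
The plan is to handle the case $k=1$ by equipping $M(\Z_n,1)$ with an explicit regular CW-structure having $4n+5$ cells, and then to pass from $k$ to $k+1$ by iterating the non-Hausdorff suspension $\Su$, which adds exactly two points and realizes the topological suspension up to weak homotopy equivalence.

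For the base case, I would realize $M(\Z_n,1)$ as the quotient of the disk $D^{2}$ by the degree-$n$ map on its boundary. Place the centre $O$ together with $2n$ equally spaced boundary points $p_{0},\ldots,p_{2n-1}$ and draw the $2n$ radii, so that the disk acquires a regular CW-decomposition whose $2n$ pie-slice $2$-cells are each bounded by two consecutive radii and the boundary arc between them. Next give the target $S^{1}$ the CW-structure with two vertices $P,Q$ and two edges $\alpha,\beta$ (from $P$ to $Q$ and from $Q$ to $P$), and identify $\partial D^{2}$ via the degree-$n$ map sending $p_{2i}\mapsto P$, $p_{2i+1}\mapsto Q$ and the boundary arcs cyclically to $\alpha\beta\alpha\beta\cdots$. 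The resulting quotient $K_{n}$ has $3$ vertices, $2+2n$ edges (the two boundary edges together with the $2n$ distinct radii) and $2n$ triangular $2$-cells whose boundaries traverse $O,P,Q$ along pairwise distinct edges. After the routine verification that every characteristic map is a homeomorphism onto its image, McCord's theorem then yields that the face poset $Y_{n}:=\X(K_{n})$ is a finite model of $M(\Z_{n},1)$ with exactly $3+(2+2n)+2n=4n+5$ points.

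For general $k\geq 1$, I would set $X_{n,k}=\Su^{k-1}(Y_{n})$. Since each application of $\Su$ adds two points,
\[
\#X_{n,k}=(4n+5)+2(k-1)=4n+2k+3,
\]
while iterating McCord's theorem on the non-Hausdorff suspension yields that $X_{n,k}$ is weak homotopy equivalent to $\Sigma^{k-1}M(\Z_{n},1)\simeq M(\Z_{n},k)$, since the suspension of a Moore space $M(G,j)$ is an $M(G,j+1)$ for $j\geq 1$.

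The only delicate step will be the regularity of $K_{n}$, which amounts to checking that the radii $r_{0},\ldots,r_{2n-1}$ remain pairwise distinct edges in the quotient (despite many of them sharing endpoints) and that the $2n$ triangular cells likewise have pairwise distinct attaching $1$-cycles; everything else follows directly from the two McCord-type results recalled in Section~2.
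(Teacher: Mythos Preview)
Your argument is correct and essentially identical to the paper's: both build $M(\Z_n,1)$ by subdividing a disk into $2n$ radial triangles and then identifying the boundary via a degree-$n$ map onto a two-vertex, two-edge circle, obtaining a regular CW-complex with $3$ vertices, $2n+2$ edges and $2n$ faces, and both then apply $\Su^{k-1}$ to the resulting $(4n+5)$-point face poset. The only difference is cosmetic (you name the centre $O$ and boundary images $P,Q$, while the paper labels them $d$ and the identified $c_{2t}$, $c_{2t+1}$), and you are slightly more explicit about why the quotient remains regular.
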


\begin{proof}
Let $C_n$ be a regular $2n$--gon with vertices $c_0,\ldots,c_{2n-1}$ and center $d$. Let $c_{2n}=c_0$. We regard $C_n$ as regular CW-complex with $2$--cells $c_t c_{t+1} d$ for $0\leq t\leq 2n-1$, $1$--cells $c_t d$ and $c_t c_{t+1}$ for $0\leq t\leq 2n-1$ and $0$--cells $d$ and $c_t$ for $0\leq t\leq 2n-1$. 

We obtain a CW-model of $M(\Z_n,1)$, which will be denoted $X_n$, by identifying (directed) edges $c_{2t}c_{2t+1}$ for $0\leq t\leq {n-1}$ and  $c_{2t-1}c_{2t}$ for $1\leq t\leq {n}$.

Now, $X_n$ inherits from $C_n$ a regular CW-complex structure which has $2n$ $2$--cells, $2n+2$ $1$--cells and $3$ $0$--cells. Let $Z_n$ be the face poset of $X_n$. It follows that $Z_n$ is a finite model of $M(\Z_n,1)$ which has $4n+5$ points. Hence $\Su^{k-1} Z_n$ is a finite model of $M(\Z_n,k)$ with $4n+2k+3$ points. 
\end{proof}

\begin{rem}
The space $X_2$ constructed in the proof above is homeomorphic to $\Pp^{2}$. Moreover, it has the regular CW-complex structure of figure \ref{fig_CW_P2_1} and thus, the space $Z_2$ is homeomorphic to the space $\Pp^{2}_1$ of figure \ref{fig_finite_P2_1}. By \ref{coro_real_proj_plane}, $Z_2$ is a minimal finite model of $M(\Z_2,1)$.
\end{rem}

\begin{question} 
Let $n,k\in \N$ with $n\geq 2$. If $n\geq 3$ or $k\geq 2$, is the space $\Su^{k-1} Z_n$ a minimal finite model of $M(\Z_n,k)$?
\end{question}

We finish this article stating a conjecture which is related to the previous question and to the conjecture of Hardie, Vermeulen and Witbooi that we proved in this work (cf. \ref{theo_HVW}).

\begin{conjecture}
Let $X$ be a finite $T_0$--space and let $k\in\N$ such that $k\geq 2$. If $H_k(X)$ has torsion then $\# X \geq 2k+11$.
\end{conjecture}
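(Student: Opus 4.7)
The plan is to induct on $k$, with the base case $k=1$ provided by Theorem \ref{theo_HVW}, which gives the matching bound $\# X\geq 13=2(1)+11$. Suppose the conjecture holds for all $1\leq k'<k$. Let $X$ be a minimal counterexample, so $H_k(X)$ has torsion, $\# X<2k+11$, and every proper subspace of $X$ has $H_k$ torsion-free; after successively removing beat points (which does not affect integral homology) assume also that $X$ has no beat points.

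For each $p\in X$ the long exact sequence of \cite[Proposition 3.2]{CO} reads
\begin{displaymath}
\cdots\to H_k(\widehat C_p)\to H_k(X-\{p\})\xrightarrow{i_\ast} H_k(X)\xrightarrow{\partial_p} H_{k-1}(\widehat C_p)\to\cdots,
\end{displaymath}
and $H_k(X-\{p\})$ is torsion-free by minimality. Let $T\leq H_k(X)$ denote the torsion subgroup. \emph{Case 1}: some $p\in X$ satisfies $\partial_p(T)\neq 0$, so $H_{k-1}(\widehat C_p)$ has torsion; by the inductive hypothesis (or by Theorem \ref{theo_HVW} when $k=2$, noting that $2k+9=13$ in that case) one obtains $\#\widehat C_p\geq 2(k-1)+11=2k+9$. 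If $C_p=X$, then every element of $X$ is comparable to $p$, so $X=\widehat U_p\circledast\{p\}\circledast\widehat F_p$ and $|\mathcal{K}(X)|$ is a cone, hence contractible---contradicting torsion in $H_k(X)$. Thus there exists $q\in X-C_p$, giving $\{p,q\}\subseteq X-\widehat C_p$ and $\# X\geq\#\widehat C_p+2\geq 2k+11$, as required. \emph{Case 2}: $\partial_p(T)=0$ for every $p\in X$, so $T$ lies in the image of $H_k(X-\{p\})\to H_k(X)$ for every $p$.

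Case 2 will be the crux of the proof. A first structural consequence is that $H_k(\widehat C_p)\neq 0$ for every $p$: otherwise the exact sequence would present $H_k(X)$ as an extension of the free group $H_k(X-\{p\})$ by a torsion-free subgroup of $H_{k-1}(\widehat C_p)$, forcing $H_k(X)$ itself to be free. Hence $\dim\mathcal{K}(\widehat C_p)\geq k$, so every $p\in X$ lies on a chain of $k+2$ elements, and $h(X)\geq k+1$. The strategy to close Case 2 is to combine this chain-through-every-point condition with the poset splitting technique of Section \ref{section_splitting}: one seeks a splitting triad $(X;C,X-C)$ in which $C$ is a union of contractible pieces such as subsets of the form $U_a$ or $F_y$, then applies Theorem \ref{theo_homology_cover_space} (or an appropriate relative homology computation) to transfer the torsion into $X-C$ and recover by the inductive hypothesis the bound $\#(X-C)\geq 2k+9$, with at least two extra points coming from $C$. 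The hard part is to exhibit such a splitting in full generality---equivalently, to rule out diffuse torsion that is not localized near any single vertex---since, unlike the $k=1$ situation of Theorem \ref{theo_HVW} where link constraints together with $\Pp^2$-specific counting close the argument, for larger $k$ the combinatorial accounting of $\Relp$, $\Relm$, $\alpha_b$ and $\beta_b$ used in the proofs of Theorems \ref{theo_13_points} and \ref{theo_torus_Klein} must be extended to higher-dimensional analogues that interact correctly with the height hypothesis $h(X)\geq k+1$.
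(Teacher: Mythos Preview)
The statement you are attempting to prove is labelled a \emph{Conjecture} in the paper and is explicitly left open; the paper provides no proof against which to compare. Your proposal is not a proof either: Case~2 is openly unfinished, and you yourself describe the missing step---constructing a splitting triad that forces the torsion into a proper subspace of size at least $2k+9$---as ``the hard part''. That is precisely the obstruction that makes this a conjecture rather than a theorem, and nothing in your outline indicates how to overcome it.

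A couple of local issues are also worth flagging. In Case~1 you write $X=\widehat U_p\circledast\{p\}\circledast\widehat F_p$ when $C_p=X$; this is false in general, since elements of $\widehat U_p$ need not be comparable to elements of $\widehat F_p$. The conclusion that $X$ is contractible is nevertheless correct, but for a different reason: the paper recalls (from Stong's results) that $C_p$ is always contractible, so $C_p=X$ already gives the contradiction. In Case~2, your claim that $H_k(\widehat C_p)\neq 0$ for every $p$ relies on $H_{k-1}(\widehat C_p)$ being torsion-free, which does not follow from minimality in degree $k$; you would need to invoke the inductive hypothesis in degree $k-1$ to either bound $\#\widehat C_p$ directly or assume $H_{k-1}(\widehat C_p)$ torsion-free. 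These are repairable, but they do not touch the central gap: without a mechanism to localise the torsion when every $\partial_p$ kills it, the induction does not close.
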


Clearly, if this conjecture is true then the finite models of $M(\Z_2,k)$ constructed above will be minimal finite models and thus we will obtain an affirmative answer to the case $n=2$ of the previous question.

\end{document}